\numberwithin{equation}{section}
\theoremstyle{plain}
\newtheorem{thm}{Theorem}[section]
\newtheorem{lem}[thm]{Lemma}
\newtheorem{prop}[thm]{Proposition}
\newtheorem{cor}[thm]{Corollary}
\newtheorem{letterthm}{Theorem}
\newtheorem{lettercor}[letterthm]{Corollary}
\theoremstyle{definition}
\newtheorem{defn}[thm]{Definition}
\newtheorem*{defn*}{Definition}
\newtheorem{exmp}[thm]{Example}
\newtheorem{rem}[thm]{Remark}
\newtheorem*{notation}{Notation}
\newtheorem{claim}[thm]{Claim}
\newcommand{\N}{\mathbf{N}}
\newcommand{\R}{\mathbf{R}}
\newcommand{\C}{\mathbf{C}}
\newcommand{\Q}{\mathbf{Q}}
\newcommand{\Z}{\mathbf{Z}}
\newcommand{\cA}{\mathcal{A}}
\newcommand{\cB}{\mathcal{B}}
\newcommand{\cC}{\mathcal{C}}
\newcommand{\cK}{\mathcal{K}}
\newcommand{\cM}{\mathcal{M}}
\newcommand{\cN}{\mathcal{N}}
\newcommand{\cP}{\mathcal{P}}
\newcommand{\cQ}{\mathcal{Q}}
\newcommand{\cS}{\mathcal{S}}
\newcommand{\cU}{\mathcal{U}}
\newcommand{\cZ}{\mathcal{Z}}
\newcommand{\tpi}{{\widetilde{\pi}}}
\newcommand{\tsigma}{{\widetilde{\sigma}}}
\newcommand{\oV}{{\overline V}}
\newcommand{\oU}{{\overline U}}
\newcommand{\ov}{{\overline v}}
\newcommand{\ou}{{\overline u}}
\newcommand{\Stab}{\operatorname{Stab}}
\newcommand{\Fix}{\operatorname{Fix}}
\newcommand{\ot}{\otimes}
\newcommand{\ovt}{\mathbin{\overline{\otimes}}}
\newcommand{\Aut}{\operatorname{Aut}}
\newcommand{\Ad}{\operatorname{Ad}}
\newcommand{\Ind}{\operatorname{Ind}}
\newcommand{\id}{\operatorname{id}}
\newcommand{\SL}{\operatorname{SL}}
\newcommand{\EL}{\operatorname{EL}}
\newcommand{\SO}{\operatorname{SO}}
\newcommand{\Prob}{\operatorname{Prob}}
\newcommand{\supp}{\operatorname{supp}}
\newcommand{\Sub}{\operatorname{Sub}}
\newcommand{\bary}{\operatorname{Bar}}
\newcommand{\dpr}{^{\prime\prime}}
\newcommand{\eps}{\varepsilon}
\newcommand{\actson}{\curvearrowright}
\newcommand{\rE}{\operatorname{ E}}
\newcommand{\rC}{\operatorname{C}}
\newcommand{\rL}{\operatorname{ L}}
\newcommand{\Int}{\operatorname{Int}}
\newcommand{\Ball}{\operatorname{Ball}}
\newcommand{\Har}{\operatorname{Har}}
\newcommand{\Rep}{\operatorname{Rep}}
\newcommand{\Aff}{\operatorname{Aff}}
\newcommand{\PSL}{\operatorname{PSL}}
\newcommand{\dd}{{\,\mathrm d}}
\title[Stationary characters on lattices of semisimple Lie groups]{Stationary characters on lattices \\ of semisimple Lie groups}
\author{R\'emi Boutonnet}
\address{Institut de Math\'ematiques de Bordeaux \\ CNRS \\ Universit\'e Bordeaux I \\ 33405 Talence \\ FRANCE}
\email{remi.boutonnet@math.u-bordeaux.fr}
\thanks{RB is supported by a PEPS grant from CNRS and ANR grant AODynG 19-CE40-0008}
\author{Cyril Houdayer}
\address{Universit\'e Paris-Saclay \\ Institut Universitaire de France \\  Laboratoire de Math\'ematiques d'Orsay\\ CNRS \\ 91405 Orsay\\ FRANCE}
\email{cyril.houdayer@universite-paris-saclay.fr}
\thanks{CH is supported by ERC Starting Grant GAN 637601 and Institut Universitaire de France}
\subjclass[2010]{22D10, 22D25, 22E40, 37A15, 46L10, 46L30, 46L45, 60J50}
\keywords{Boundary theory; $\rC^*$-algebras; Characters; Lattices in Semisimple Lie groups; Stationary measures; Uniformly recurrent subgroups; von Neumann algebras}
\begin{document}

\begin{abstract}
We show that stationary characters on irreducible lattices $\Gamma < G$ of higher-rank connected semisimple Lie groups are conjugation invariant, that is, they are genuine characters. This result has several applications in representation theory, operator algebras, ergodic theory and topological dynamics. In particular, we show that for any such irreducible lattice $\Gamma < G$,  the left regular representation $\lambda_\Gamma$ is weakly contained in any weakly mixing representation $\pi$. We prove that for any such irreducible lattice $\Gamma < G$, any Uniformly Recurrent Subgroup (URS) of $\Gamma$ is finite, answering a question of Glasner--Weiss. We also obtain a new proof of Peterson's character rigidity result for irreducible lattices $\Gamma < G$. The main novelty of our paper is a structure theorem for stationary actions of lattices on von Neumann algebras. 
\end{abstract}

\maketitle

\section{Introduction and statement of the main results}

A major achievement in the theory of discrete subgroups of semisimple Lie groups is Margulis' {\em superrigidity theorem}. The statement is as follows: whenever $G$ is a connected semisimple Lie group with trivial center, no compact factor and real rank at least two, $\Gamma < G$ is an irreducible lattice and $H$ is a simple Lie group with trivial center, any homomorphism $\pi : \Gamma \to H$ such that $\pi(\Gamma)$ is Zariski dense in $H$ and not relatively compact in $H$ extends to a continuous homomorphism $\pi : G \to H$ (see \cite[Chapter VII]{Ma91} for more general statements). Connes suggested that there should be a rich analogy between the embedding of a lattice in its ambient Lie group and the embedding of a lattice in its ambient group von Neumann algebra (see \cite{Jo00}). Notably, an operator algebraic version of Margulis' superrigidity theorem was expected to hold, and this was confirmed by recent advances. Interestingly, these recent advances build on another famous result of Margulis; his normal subgroup theorem \cite[Theorem IV.4.10]{Ma91}), which is not directly related to his superrigidity statement.

The first {\em operator algebraic superrigidity} theorem was obtained by Bekka \cite{Be06} who showed that whenever $\Gamma = \PSL_n(\Z)$ with $n \geq 3$ and $M$ is a type ${\rm II_1}$ factor, any homomorphism $\pi : \Gamma \to \mathcal U(M)$ such that $\pi(\Gamma)\dpr = M$ extends to a normal unital $\ast$-isomorphism $\pi : \rL(\Gamma) \to M$. Recently, Peterson \cite{Pe14} obtained a far-reaching generalization of Bekka's result by showing that any irreducible lattice $\Gamma < G$ in a property (T) connected semisimple Lie group with trivial center and real rank at least two is operator algebraic superrigid in the above sense (see also \cite{CP13}). Operator algebraic superrigidity can be reformulated as a classification problem for characters on the group. 

Recall that a {\it character} on a countable discrete group $\Lambda$ is a positive definite function which is invariant under conjugation, and which is normalized to map the identity element to $1$. Thanks to the GNS construction, any character on $\Lambda$ gives rise to a homomorphism of $\Lambda$ into the unitary group of a tracial von Neumann algebra. For example, the natural embedding of $\Lambda$ in its group von Neumann algebra $\rL(\Lambda)$ corresponds to the Dirac character $\delta_e$ at the trivial element $e$. A rich source of characters comes from ergodic theory of group actions. Indeed, for any probability measure preserving (pmp) action $\Lambda \curvearrowright (X, \nu)$, the map $\varphi : \Lambda \to \C : \gamma \mapsto \nu(\{x \in X \mid \gamma x = x\})$ defines a character on $\Lambda$. Then $\varphi = \delta_e$ if and only if the action $\Lambda \curvearrowright (X, \nu)$ is essentially free. The set of characters on a countable discrete group $\Lambda$ is a convex set which is compact with respect to pointwise convergence. The group $\Lambda$ is operator algebraic superrigid in the above sense if and only if every extreme point $\varphi$ in the space of characters of $\Lambda$ is either almost periodic (i.e.\ the corresponding GNS representation is finite dimensional) or $\varphi = \delta_e$. Note that such a classification of characters for lattices strengthens both Margulis'\ normal subgroup theorem (see \cite[Theorem IV.4.10]{Ma91}) and Stuck--Zimmer's rigidity result on stabilizers of pmp ergodic actions (see \cite[Corollary 4.4]{SZ92}). See \cite{Be19, BF20, DM12, LL20, PT13} for other character rigidity results. 

A generic construction for characters goes as follows. If $\pi: \Lambda \to \cU(A)$ is a group homomorphism into the set of unitary elements of a unital $\rC^*$-algebra $A$, and if $\tau$ is a tracial state on $A$, then $\tau \circ \pi$ is a character on $\Lambda$. But this construction requires that $A$ admits a trace, which is not always the case. This situation is analogous to the commutative setting, where $\Lambda$ acts continuously on a compact space $X$: there is not always a $\Lambda$-invariant Borel probability measure on the space $X$. In this respect, Furstenberg introduced the notion of {\em stationary measure} and stationary action \cite{Fu62a, Fu62b}. The key point is that, for any continuous action $\Lambda \curvearrowright X$ on a compact space, there always exists a stationary Borel probability measure on $X$. For this reason, the concept of stationary action plays an important role in the study of nonamenable groups and in Furstenberg boundary theory.  Note that Furstenberg boundary theory was one of the key tools Margulis used in the proof of his superrigidity theorem (\cite{Ma91}). 

This concept of stationarity was recently used in the noncommutative setting by Hartman--Kalantar \cite{HK17} in the context of $\rC^*$-simplicity of groups. They investigate a stationary version of characters, which will also be our main object of study. 

\begin{notation} Before stating our main results, we introduce the following notation that we will use throughout the introduction.
\begin{itemize}
\item Let $G$ be any connected semisimple Lie group with finite center and no nontrivial compact factor, all of whose simple factors have reak rank at least two. Choose a maximal compact subgroup $K < G$ and a minimal parabolic subgroup $P<G$, so that $G = KP$.  For instance, for every $n \geq 3$, let $G = \SL_n(\R)$ and choose $K = \SO_n(\R)$ and $P < G$ the subgroup of upper triangular matrices.
\item We denote by $\nu_P \in \Prob(G/P)$ the unique $K$-invariant Borel probability measure on the homogeneous space $G/P$. More generally, if $P \subset Q \subset G$ is a parabolic subgroup, we denote by $\nu_Q \in \Prob(G/Q)$ the unique $K$-invariant Borel probability measure on the homogeneous space $G/Q$. Observe that for every parabolic subgroup $P \subset Q \subset G$, the probability measure $\nu_Q \in \Prob(G/Q)$ is $G$-quasi-invariant.
\end{itemize}
\end{notation}

The following concept will be central in our paper.
\begin{defn*}
Let $G$ be as in the notation. Let $\Gamma< G$ be any lattice. We say that a probability measure $\mu_0 \in \Prob(\Gamma)$ is {\em Furstenberg} if the following three conditions are satisfied:
\begin{itemize}
\item [$(\rm i)$] The support of $\mu_0$ is equal to $\Gamma$;
\item [$(\rm ii)$]  $\mu_0 \ast \nu_P = \nu_P$, that is, $\nu_P$ is $\mu_0$-stationary;
\item [$(\rm iii)$] The space $(G/P, \nu_P)$ is the Poisson boundary associated with the simple random walk on $\Gamma$ with law $\mu_0$ (see \cite{Fu62b, Fu00}).
\end{itemize}
\end{defn*}

By a result of Furstenberg \cite[Theorem 3]{Fu67} (see also \cite[Theorem 2.21]{Fu00}), there always exists a Furstenberg probability measure $\mu_0 \in \Prob(\Gamma)$. Moreover, for every parabolic subgroup $P \subset Q \subset G$, $\nu_Q$ is the unique $\mu_0$-stationary measure on the homogeneous space $G/Q$ (see \cite{Fu73, GM89}).

Denote by $\mathcal P(\Gamma)$ the weak$^*$-compact convex space of positive definite functions on $\Gamma$. We say that $\varphi \in \mathcal P(\Gamma)$ is {\em normalized} if $\varphi(e) = 1$. Let $\mu \in \Prob(\Gamma)$ be any probability measure. We say that a normalized positive definite function $\varphi \in \mathcal P(\Gamma)$ is a $\mu$-{\em character} if $\sum_{\gamma \in \Gamma} \mu(\gamma) \, \varphi(\gamma^{-1}g\gamma) = \varphi(g)$, for all $g \in \Gamma$. Any character on $\Gamma$ is obviously a $\mu$-character. Conversely, our first main result shows that any $\mu_0$-character is a genuine character.

\begin{letterthm}\label{main stationary characters}
Let $G$ be as in the notation. Let $\Gamma < G$ be any irreducible lattice and $\mu_0 \in \Prob(\Gamma)$ any Furstenberg probability measure. Then any $\mu_0$-character $\varphi$ on $\Gamma$ is conjugation invariant, that is, $\varphi$ is a genuine character.
\end{letterthm}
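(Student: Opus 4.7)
The plan is to pass from $\varphi$ to a stationary state on a von Neumann algebra via GNS, and then to invoke the paper's announced structure theorem for stationary actions on von Neumann algebras (a noncommutative Nevo--Zimmer type rigidity) in order to upgrade stationarity to invariance. Concretely, let $(\pi_\varphi, \cH_\varphi, \xi_\varphi)$ be the GNS triple associated with $\varphi$, set $M := \pi_\varphi(\Gamma)\dpr$, and define $\tau(x) := \langle x\xi_\varphi,\xi_\varphi\rangle$ for $x \in M$. The inner action $\alpha_\gamma(x) := \pi_\varphi(\gamma)\, x\, \pi_\varphi(\gamma)^*$ of $\Gamma$ on $M$ translates the $\mu_0$-character hypothesis into the stationarity identity
\[
\sum_{\gamma \in \Gamma} \mu_0(\gamma)\, \tau \circ \alpha_{\gamma^{-1}} \;=\; \tau.
\]
Since $\tau$ is normal on $M = \pi_\varphi(\Gamma)\dpr$, conjugation invariance of $\varphi$ is equivalent to $\tau$ being $\Gamma$-invariant, equivalently to $\tau$ being a trace on $M$. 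So the goal is to upgrade stationarity to invariance.

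The standard way to exploit stationarity is to disintegrate $\tau$ over the Poisson boundary. Because $(G/P,\nu_P)$ is the $(\Gamma,\mu_0)$-Poisson boundary, a bounded martingale argument in the predual $M_*$ produces a $\Gamma$-equivariant Borel family of states $b \mapsto \tau_b$ on $M$ satisfying
\[
\tau \;=\; \int_{G/P} \tau_b \, d\nu_P(b),
\qquad
\alpha_\gamma^*\, \tau_{\gamma\cdot b} \;=\; \tau_b
\text{ for every }\gamma\in\Gamma,\ \text{$\nu_P$-a.e.\ }b.
\]
Hence it would suffice to show that this disintegration is essentially constant, for then equivariance immediately yields $\alpha_\gamma^* \tau = \tau$ for every $\gamma\in\Gamma$, and we are done.

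Triviality of the disintegration is the heart of the matter and is where the paper's structure theorem is meant to be invoked. The route I would take is a noncommutative adaptation of the Nevo--Zimmer dichotomy: a non-constant equivariant ``bundle of states'' $b\mapsto \tau_b$ on $G/P$ should, under the higher-rank hypothesis on every simple factor of $G$ and the irreducibility of $\Gamma<G$, have to factor through a proper parabolic quotient $G/Q$ with $Q \supsetneq P$, via a Mautner/Howe--Moore type mixing argument on the unipotent radical of $P$. Combined with the uniqueness of the $\mu_0$-stationary measure on every $G/Q$ and the irreducibility of $\Gamma$, this should force the data to descend to a $G$-invariant object, which is incompatible with the non-invariance of $\nu_P$ unless the disintegration was constant from the outset.

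The principal obstacle is precisely this last structural step. Classical Nevo--Zimmer reasoning lives in the commutative category of $G$-probability spaces and relies crucially on pointwise and ergodic disintegrations. In our setting, $M$ need not be tracial --- showing that it \emph{is} tracial is our very goal --- and no pointwise ergodic technology is available a priori. Constructing operator-algebraic substitutes that function for an arbitrary stationary state on an arbitrary von Neumann algebra is the hard new ingredient, and is what the abstract designates as the main novelty of the paper.
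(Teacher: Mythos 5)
Your setup --- the GNS construction, the stationary normal state $\tau$ on $M=\pi_\varphi(\Gamma)\dpr$ for the conjugation action, the boundary disintegration $\tau=\int_{G/P}\tau_b\,\d\nu_P(b)$, and the appeal to a noncommutative Nevo--Zimmer structure theorem --- matches the paper's strategy up to the point where the dichotomy is applied. (Two smaller omissions: the paper first reduces to \emph{extreme} $\mu_0$-characters via Krein--Milman, which is what yields the ergodicity of $\Gamma\curvearrowright M$ required to invoke the structure theorem, and it checks faithfulness of $\tau$ from stationarity together with $\supp(\mu_0)=\Gamma$.)

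The genuine gap is in your concluding step. The dichotomy does \emph{not} assert that a non-invariant stationary state is impossible; its second branch --- the existence of a $\Gamma$-equivariant normal unital embedding $\theta:\rL^\infty(G/Q,\nu_Q)\to M$ for some proper parabolic $P\subset Q\subsetneq G$ --- genuinely occurs for ergodic $(\Gamma,\mu_0)$-von Neumann algebras (already $\rL^\infty(G/P,\nu_P)$ itself is one), so your claim that non-constancy of $b\mapsto\tau_b$ is ``incompatible with the non-invariance of $\nu_P$'' would prove far too much and cannot be correct as stated. What is actually needed in the second branch is the Hartman--Kalantar/Haagerup-type argument of Lemma \ref{HK dirac}: because $(G/Q,\nu_Q)$ is a $(\Gamma,\mu_0)$-boundary, the boundary states $\tau_b$ restrict to Dirac states on $\theta(\rC(G/Q))$, which therefore lies in the \emph{multiplicative domain} of $\nu_P$-almost every $\tau_b$; combining this with the essential freeness (modulo the center) of $\Gamma\curvearrowright(G/Q,\nu_Q)$ from Lemma \ref{lem:free}, one chooses for each non-central $\gamma$ and almost every $b$ a function $f\in\rC(G/Q)$ with $f(y)=1$ and $f(\gamma y)=0$ at the relevant point $y$, and computes
\[
\tau_b(\pi_\varphi(\gamma))=\tau_b(\theta(f)\pi_\varphi(\gamma))=\tau_b(\pi_\varphi(\gamma)\,\theta(\sigma_\gamma^{-1}(f)))=\tau_b(\pi_\varphi(\gamma))\,f(\gamma y)=0 .
\]
Integrating over $G/P$ gives $\varphi(\gamma)=0$ for all $\gamma$ outside the center, so $\varphi$ is conjugation invariant after all, which is the contradiction with the assumed non-invariance of $\tau$. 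Without this multiplicative-domain/essential-freeness step, your argument does not close.
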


Before describing the applications of Theorem \ref{main stationary characters}, let us explain the main technical novelty used in the proof, which is of independent interest. Let $M$ be any von Neumann algebra, $\phi \in M_\ast$ any normal state and $\sigma : \Gamma \curvearrowright M$ any action. We simply write $\gamma \phi = \phi \circ \sigma_\gamma^{-1} \in M_\ast$ for every $\gamma \in \Gamma$. We say that the action $\sigma : \Gamma \curvearrowright M$ is {\em ergodic} if the fixed-point subalgebra $M^\Gamma = \{x \in M \mid \sigma_\gamma(x) = x, \forall \gamma \in \Gamma\}$ satisfies  $M^\Gamma = \C 1$. We say that the state $\phi \in M_\ast$ is $\mu_0$-{\em stationary} if $\sum_{\gamma \in \Gamma} \mu_0(\gamma) \, \gamma \phi = \phi$. If the action $\sigma : \Gamma \curvearrowright M$ is ergodic and the state $\phi \in M_\ast$ is $\mu_0$-stationary, we say that $(M, \phi)$ is an ergodic $(\Gamma, \mu_0)$-von Neumann algebra. In the case when $(M, \phi) = \rL^\infty(X, \nu)$ for some standard probability space $(X, \nu)$ where $\phi$ is given by integration against $\nu$, we say that $(X, \nu)$ is an ergodic $(\Gamma, \mu_0)$-space.

The following result about $(\Gamma,\mu_0)$-von Neumann algebras is in the spirit of Nevo--Zimmer's work \cite{NZ97,NZ00}, see also the survey \cite{NZ02}.

\begin{letterthm}\label{main NCNZ}
Let $G$ be as in the notation. Let $\Gamma < G$ be any lattice and $\mu_0 \in \Prob(\Gamma)$ any Furstenberg probability measure. Let $(M, \phi)$ by any ergodic $(\Gamma, \mu_0)$-von Neumann algebra. Then the following dichotomy holds. 
\begin{itemize}
\item Either $\phi$ is $\Gamma$-invariant.
\item Or there exist a proper parabolic subgroup $P \subset Q \subsetneq G$ and a $\Gamma$-equivariant normal unital $\ast$-embedding $\theta : \rL^\infty(G/Q, \nu_Q) \to M$ such that $\phi \circ \theta = \nu_Q$.
\end{itemize}
\end{letterthm}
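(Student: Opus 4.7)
My approach mirrors Nevo--Zimmer's projection theorem, using $(G/P,\nu_P)$ as the Poisson boundary of $(\Gamma,\mu_0)$ (condition (iii) of the definition of Furstenberg measure). For each $x\in M$, the function $h_x\colon \Gamma\to\C$, $h_x(\gamma):=\gamma\phi(x)$, is bounded and $\mu_0$-harmonic by the $\mu_0$-stationarity of $\phi$. Since $(G/P,\nu_P)$ is the Poisson boundary, the Poisson formula provides a unique $\Psi(x)\in\rL^\infty(G/P,\nu_P)$ such that $h_x(\gamma)=\int \Psi(x)\dd(\gamma\nu_P)$, and the resulting map $\Psi\colon M\to\rL^\infty(G/P,\nu_P)$ is $\Gamma$-equivariant, unital, normal and completely positive, with $\phi=\nu_P\circ\Psi$. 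Disintegrating yields a $\Gamma$-equivariant measurable field $(\phi_b)_{b\in G/P}$ of normal states on $M$ such that $\Psi(x)(b)=\phi_b(x)$, $\phi=\int\phi_b\dd\nu_P$, and $\phi_{\gamma b}=\gamma\phi_b$ for $\nu_P$-a.e.\ $b$.

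Let $A\subseteq\rL^\infty(G/P,\nu_P)$ be the von Neumann subalgebra generated by $\Psi(M)$; it is $\Gamma$-invariant by equivariance of $\Psi$. By a Margulis-type factor theorem for $\Gamma$-invariant sub-vN-algebras of $\rL^\infty(G/P,\nu_P)$ (classifying them in terms of parabolic subgroups of $G$ containing $P$), we have $A=\rL^\infty(G/Q,\nu_Q)$ for some $P\subseteq Q\subseteq G$. If $Q=G$, so that $A=\C$, then $\phi_b=\phi$ for $\nu_P$-a.e.\ $b$; the relation $\phi_{\gamma b}=\gamma\phi_b$ immediately gives $\gamma\phi=\phi$ for all $\gamma\in\Gamma$, landing in the first alternative of the dichotomy. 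In the remaining case $Q\subsetneq G$, one must construct the $\Gamma$-equivariant normal unital $\ast$-embedding $\theta\colon \rL^\infty(G/Q,\nu_Q)\to M$ with $\phi\circ\theta=\nu_Q$. The natural candidate is to invert $\Psi$ on Choi's multiplicative domain $M_0\subseteq M$ (the largest von Neumann subalgebra on which $\Psi$ restricts to a $\ast$-homomorphism), which is automatically $\Gamma$-invariant; after killing the $\Gamma$-invariant central projection $p\in M_0$ generating the kernel (which satisfies $\phi(p)=\nu_P(\Psi(p))=0$), the restriction $\Psi|_{(1-p)M_0}$ becomes a $\Gamma$-equivariant $\ast$-isomorphism onto a $\Gamma$-invariant sub-vN-algebra of $A$, whose inverse gives the desired $\theta$, with state-preservation automatic from $\phi=\nu_P\circ\Psi$.

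The main obstacle is the construction of $\theta$ when $Q\subsetneq G$: specifically, verifying that $\Psi$ has a sufficiently large multiplicative domain to capture a nontrivial piece of $\rL^\infty(G/Q,\nu_Q)$ inside $M$. In the commutative case $M=\rL^\infty(X,\nu)$, Choi's multiplicative domain is automatically all of $M$, and the argument reduces via Gelfand duality to the classical $\Gamma$-equivariant measurable factor $X\to G/Q$ produced by Nevo--Zimmer's original projection theorem. In the genuinely noncommutative setting, however, $\Psi$ need not be multiplicative anywhere beyond the scalars, so a new structural input specific to stationary actions of higher-rank lattices on von Neumann algebras is needed to ensure $\theta$ can be defined at all. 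This analysis is precisely the main technical novelty of the paper.
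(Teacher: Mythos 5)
Your setup (harmonicity of $\gamma\mapsto\gamma\phi(x)$, the boundary map $\Psi\colon M\to\rL^\infty(G/P,\nu_P)$ with $\phi=\nu_P\circ\Psi$, Margulis' factor theorem identifying the von Neumann algebra generated by $\Psi(M)$ with some $\rL^\infty(G/Q)$, and the observation that $Q=G$ forces $\Gamma$-invariance of $\phi$) is coherent, but the proposal contains a genuine gap exactly where the theorem lives: the construction of the embedding $\theta\colon\rL^\infty(G/Q,\nu_Q)\to M$ when $Q\subsetneq G$. The map $\Psi$ goes in the wrong direction, and your plan to invert it on Choi's multiplicative domain has no support: a normal ucp map from a von Neumann algebra to a commutative one can have multiplicative domain reduced to $\C 1$, and even when it is larger there is no argument that its image under $\Psi$ contains (a copy of) $\rL^\infty(G/Q')$ for some proper parabolic $Q'$. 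You acknowledge this yourself in the last paragraph, so the proposal is an accurate reduction of the problem to its hard core rather than a proof; as written, the second alternative of the dichotomy is never established in the noncommutative case.

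For comparison, the paper does not attack the lattice action directly via $\Psi$. It first uses the Furstenberg property of $\mu_0$ to \emph{induce} the $\mu_0$-stationary state on $M$ to a $\mu$-stationary faithful normal state on $\Ind_\Gamma^G(M)\cong\rL^\infty(G/\Gamma)\ovt M$ (Theorem \ref{induced stationary state}; the translates $\phi_g$ for $g\in G$ are manufactured from the boundary map and harmonicity). It then proves a noncommutative Nevo--Zimmer theorem for $G$-actions (Theorem \ref{thm:NZ}): embedding the algebra $G$-equivariantly into $\rL^\infty(G/P)\ovt\cN$ where $\cN$ is the GNS algebra of the $P$-invariant state, exploiting the contraction dynamics of $S_\theta'$ on $\oV_\theta$, Mautner's phenomenon (after passing to a corner $q_{\theta,s}$, since $\psi$ need not be faithful on $\cN$), essential ranges, and the Ge--Kadison/Str\u{a}til\u{a}--Zsid\'o splitting theorems to show that already the \emph{center} of a suitable $G$-invariant subalgebra fails to be $\varphi$-preserved; the commutative Nevo--Zimmer theorem then produces the copy of $\rL^\infty(G/Q)$. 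Finally a disintegration and cocycle-untwisting argument brings the $G$-equivariant embedding $\rC(G/Q)\to\Ind_\Gamma^G(M)$ back down to a $\Gamma$-equivariant one into $M$, and uniqueness of the $\mu_0$-stationary measure on $G/Q$ gives $\phi\circ\theta=\nu_Q$. This passage through the center, rather than through a multiplicative domain of a boundary ucp map, is the structural input your outline correctly identifies as missing.
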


Theorem \ref{main NCNZ} is reminiscent of \cite[Theorem 1]{NZ00}, but two differences appear: 
\begin{itemize}
\item Our theorem is about stationary actions of the lattice $\Gamma$, while Nevo--Zimmer's result is about stationary actions of the ambient Lie group $G$.
\item We deal with general von Neumann algebras, while Nevo--Zimmer's theorem only deals with measure spaces, i.e.\ with {\em commutative} von Neumann algebras.
\end{itemize}

Firstly, we reduce to a statement for $G$-actions by using induction and disintegration. This is based on a simple, surprisingly unnoticed observation. For any ergodic $(\Gamma, \mu_0)$-von Neumann algebra $(M, \phi)$, we construct a $\mu$-stationary normal state $\varphi$ on the induced von Neumann algebra $\Ind_\Gamma^G (M)$, where $\mu \in  \Prob(G)$ is a $K$-invariant admissible Borel probability measure (see Theorem \ref{induced stationary state}). This is where we use that the probability measure $\mu_0 \in \Prob(\Gamma)$ is {\em Furstenberg}. We refer to Section \ref{section:induced} for further details. This simple observation is new even in the commutative setting, and easily combines with \cite[Theorem 1]{NZ00}, to yield Theorem \ref{letterthm:actions} below.

Secondly, we prove the analogous statement of Theorem \ref{main NCNZ} but for $G$-actions instead (see Theorem \ref{thm:NZ}). This generalization of Nevo--Zimmer theorem to the noncommutative setting presents both technical and conceptual difficulties and is not a mere adaptation of their original proof. The proof of Theorem \ref{thm:NZ} nevertheless relies on Nevo--Zimmer's result \cite[Theorem 1]{NZ00}. We refer to Section \ref{section:NZ} for a more detailed explanation of the global strategy of the proof.

Let us now explain the proof of Theorem \ref{main stationary characters}. Starting from an extreme point $\varphi$ in the weak$^*$-compact convex set of $\mu_0$-characters, denote by $\pi_\varphi$ its GNS representation and regard $M = \pi_\varphi(\Gamma)\dpr$ as a $(\Gamma, \mu_0)$-von Neumann algebra where the action $\Gamma \curvearrowright \pi_\varphi(\Gamma)\dpr$ is given by conjugation. Applying Theorem \ref{main NCNZ} to $M$ and exploiting techniques that recently appeared in the characterization of $\rC^*$-simplicity via Furstenberg boundary (see \cite{KK14, BKKO14, Ha15, HK17}), we show that $\varphi$ is necessarily conjugation invariant.

Surprisingly, Theorem \ref{main NCNZ} also allows to classify {\em genuine} characters on irreducible lattices $\Gamma < G$. More precisely, we obtain a new proof of Peterson's {\em character rigidity} result  \cite{Pe14}. 

\begin{letterthm}[Peterson, \cite{Pe14}]\label{main peterson}
Let $G$ be as in the notation and assume moreover that $G$ has trivial center. Let $\Gamma < G$ be any irreducible lattice. Then any extreme point $\varphi$ in the space of characters of $\Gamma$ is either almost periodic or $\varphi = \delta_e$.
\end{letterthm}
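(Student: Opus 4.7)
The plan is to feed the GNS data of an extreme character into Theorem~\ref{main NCNZ} applied to a suitable non-tracial enlargement. Let $\varphi$ be an extreme character of $\Gamma$, with GNS data $(\pi_\varphi, H_\varphi, \xi_\varphi)$. By extremality, $M := \pi_\varphi(\Gamma)\dpr$ is a factor carrying a faithful normal tracial state $\tau$ with $\tau \circ \pi_\varphi = \varphi$, and the conjugation action $\sigma_\gamma := \Ad(\pi_\varphi(\gamma))$ preserves $\tau$. The goal is to prove that $M$ is finite-dimensional (so that $\varphi$ is almost periodic) or $\varphi = \delta_e$.

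Applying Theorem~\ref{main NCNZ} directly to $(M,\tau)$ is not informative: the first alternative holds trivially, while the second is ruled out because any $\Gamma$-equivariant normal $\ast$-embedding $L^\infty(G/Q,\nu_Q)\hookrightarrow M$ would realize $\nu_Q$ as a $\Gamma$-invariant probability measure on $G/Q$, which does not exist for proper parabolic $Q$ by strong proximality of the boundary action. I would therefore apply Theorem~\ref{main NCNZ} to the enlarged system $\tilde M := L^\infty(G/P,\nu_P)\ovt M$ with diagonal $\Gamma$-action and stationary state $\phi := \nu_P\otimes\tau$. Passing to ergodic $(\Gamma,\mu_0)$-components and noting that $\phi$ restricts to the non-invariant measure $\nu_P$ on the center $L^\infty(G/P)$ of $\tilde M$, the first alternative must fail on at least one nontrivial component, yielding a proper parabolic $P\subseteq Q\subsetneq G$ and a $\Gamma$-equivariant normal $\ast$-embedding $\theta:L^\infty(G/Q,\nu_Q)\to\tilde M$ with $\phi\circ\theta=\nu_Q$.

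The main obstacle is to extract from $\theta$ a $\Gamma$-equivariant $\ast$-embedding of $L^\infty(G/Q,\nu_Q)$ strictly inside $M$ whenever $M$ is infinite-dimensional and $\varphi\neq\delta_e$. The key tool is the $\Gamma$-equivariant normal conditional expectation $E:=\id\otimes\tau:\tilde M\to L^\infty(G/P)$, which satisfies $\nu_P\circ E=\phi$. The composition $E\circ\theta:L^\infty(G/Q)\to L^\infty(G/P)$ is then a $\Gamma$-equivariant $\ast$-homomorphism with $\nu_P\circ(E\circ\theta)=\nu_Q$, and by uniqueness of the $\mu_0$-stationary measure on $G/Q$ it must coincide with the canonical pullback along $G/P\to G/Q$. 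Analyzing the deviation of $\theta$ from this canonical pullback through the relative commutant $\theta(L^\infty(G/Q))'\cap\tilde M$, and combining with techniques from the $\rC^*$-simplicity literature cited in the introduction, one would isolate the desired embedding $L^\infty(G/Q,\nu_Q)\hookrightarrow M$, contradicting the obstruction from the second paragraph. The only remaining possibilities are $M$ finite-dimensional (so $\varphi$ is almost periodic) or $\varphi=\delta_e$, corresponding to $M\cong\rL(\Gamma)$ with $\pi_\varphi$ the left regular representation.
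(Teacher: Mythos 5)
There is a genuine gap, and it sits exactly where you wave your hands: the ``extraction'' of an embedding $\rL^\infty(G/Q)\hookrightarrow M$ from the embedding $\theta:\rL^\infty(G/Q)\to\widetilde M=\rL^\infty(G/P)\ovt M$. Nothing prevents $\theta$ from being the canonical pullback $\rL^\infty(G/Q)\to\rL^\infty(G/P)\otimes\C1$, which exists for \emph{every} character (in particular for the trivial character and all other finite-dimensional ones, where $M$ is a matrix algebra and certainly contains no copy of $\rL^\infty(G/Q)$). In that case $\theta$ carries no information about $M$ whatsoever, and your composition $E\circ\theta$ just recovers the pullback, as you yourself note. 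So your contradiction scheme cannot close: if the extraction step worked unconditionally it would show that every extreme character is $\delta_e$, which is false. The whole content of the theorem is to decide \emph{when} the boundary genuinely acts on $M$ (giving $\delta_e$) versus when it decouples (giving almost periodicity), and your argument supplies no mechanism for making that distinction, nor any input that could force finite-dimensionality of $M$ in the second case.

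The paper's route is structurally different and supplies precisely the two missing ingredients. Instead of $\rL^\infty(G/P)\ovt M$ with the diagonal action, it applies Theorem \ref{main NCNZ} to Peterson's noncommutative Poisson boundary $\cB=\bigl(\rL^\infty(G/P)\ovt\mathbf B(H_0)\bigr)^\Gamma$, the fixed points under $\sigma_\gamma\otimes\Ad(J\pi_0(\gamma)J)$ (the \emph{commutant} action), equipped with the residual conjugation action $\beta_\gamma=\Ad(1\otimes\pi_0(\gamma))$ and the stationary state $\Phi$ of Lemma \ref{lem:beta}. Two features of $\cB$ are essential and are absent from your $\widetilde M$: first, $\cB$ is amenable, so in the invariant branch of the dichotomy a harmonicity argument forces $\cB=\C1\ovt M$, hence $M$ is amenable, and property (T) then forces $M$ to be finite-dimensional, i.e.\ $\varphi$ almost periodic; second, in the non-invariant branch the embedding of $\rL^\infty(G/Q)$ lands in an algebra containing $1\otimes\pi_0(\Gamma)$, so Lemma \ref{HK dirac} applies directly and yields $\varphi=\delta_e$. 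Both branches actually occur (for different characters); there is no contradiction to be derived. If you want to salvage your approach you must replace $\widetilde M$ by an enlargement whose structure in the invariant case controls $M$ itself --- that is exactly what the $JMJ$-twisted boundary achieves.
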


We point out that our approach requires that all simple factors of $G$ have real rank at least two (as in the notation), while Peterson's character rigidity result holds more generally when $G$ is a property (T) connected semisimple Lie group with trivial center, no nontrivial compact factor and real rank at least two. 

Let us now turn our attention to applications of Theorems \ref{main stationary characters} and \ref{main peterson} to representation theory and $\rC^*$-algebras. By \cite{BCH94}, when $G$ is a noncompact semisimple Lie group with trivial center, any irreducible lattice $\Gamma< G$ is $\rC^*$-simple and has the unique trace property, that is, the reduced $\rC^*$-algebra $\rC^*_\lambda(\Gamma)$ is simple and has a unique tracial state $\tau_\Gamma$ (see also \cite{BKKO14} for a new approach). The next corollary provides a far-reaching generalization of this phenomenon to {\em arbitrary} weakly mixing representations. Recall that a unitary representation $\pi$ is called weakly mixing if $\pi$ does not contain any nonzero finite dimensional subrepresentation.

\begin{lettercor}\label{main rep}
Let $G$ be as in the notation and assume moreover that $G$ has trivial center. Let $\Gamma < G$ be any irreducible lattice. Then for any weakly mixing representation $\pi : \Gamma \to \mathcal U(H_\pi)$, the left regular representation $\lambda_\Gamma$ is weakly contained in $\pi$. Moreover, if we denote by $\Theta_{\pi, \lambda} : \rC^*_\pi(\Gamma) \to \rC^*_\lambda(\Gamma) : \pi(\gamma) \mapsto \lambda_\Gamma(\gamma)$ the corresponding surjective unital $\ast$-homomorphism, then 
\begin{itemize}
\item [$(\rm i)$] $\tau_\Gamma \circ \Theta_{\pi, \lambda}$ is the unique tracial state on $\rC_\pi^*(\Gamma)$. 
\item [$(\rm ii)$] $\ker(\Theta_{\pi, \lambda})$ is the unique proper maximal ideal of $\rC^*_\pi(\Gamma)$.
\end{itemize}
\end{lettercor}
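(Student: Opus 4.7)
The plan is to first produce a trace on $\rC^*_\pi(\Gamma)$ via stationarity and Theorem~\ref{main stationary characters}, then to use Peterson's rigidity (Theorem~\ref{main peterson}) combined with Kazhdan property~$(T)$ of $\Gamma$ and the weak mixing of $\pi$ to show every trace on $\rC^*_\pi(\Gamma)$ pulls back to the Dirac character $\delta_e$ on $\Gamma$, and finally to deduce the three statements of the corollary.

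To produce a trace, fix a Furstenberg probability measure $\mu_0 \in \Prob(\Gamma)$ and consider the conjugation action $\sigma_\gamma(x) = \pi(\gamma) x \pi(\gamma)^{-1}$ on $\rC^*_\pi(\Gamma)$. Writing $\gamma \omega := \omega \circ \sigma_\gamma^{-1}$ for the dual action on states, the Ces\`aro averages $\omega_n := \frac{1}{n}\sum_{k=0}^{n-1} \sum_{\gamma \in \Gamma} \mu_0^{\ast k}(\gamma)\, \gamma \omega_0$ of an arbitrary initial state $\omega_0$ admit a weak-$\ast$ cluster point $\omega$, which is $\mu_0$-stationary. Then $g \mapsto \omega(\pi(g))$ is a $\mu_0$-character on $\Gamma$, hence by Theorem~\ref{main stationary characters} is conjugation invariant. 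Since $\pi(\Gamma)$ generates $\rC^*_\pi(\Gamma)$, this promotes $\omega$ to a trace.

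To identify any such trace, recall that the set of tracial states on $\rC^*_\pi(\Gamma)$ is a Choquet simplex, and the pullback of an extremal trace $\omega^{\rm ext}$ is an extremal character of $\Gamma$ (as $\pi_{\omega^{\rm ext}}(\rC^*_\pi(\Gamma))'' = \pi_{\omega^{\rm ext}}(\Gamma)''$ is a factor). By Theorem~\ref{main peterson}, this extremal character is either $\delta_e$ or almost periodic. In the almost periodic case, $\sigma := \pi_{\omega^{\rm ext}}|_\Gamma$ is a non-zero finite-dimensional unitary representation of $\Gamma$, and it is weakly contained in $\pi$ because $\pi_{\omega^{\rm ext}}$ is a $\ast$-representation of $\rC^*_\pi(\Gamma)$. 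Then $\sigma \otimes \bar\sigma$ contains the trivial representation and is weakly contained in $\pi \otimes \bar\pi$; property~$(T)$ of $\Gamma$ (which holds since each simple factor of $G$ has real rank at least $2$) upgrades this to the trivial representation genuinely embedding into $\pi \otimes \bar\pi$, forcing $\pi$ to admit a non-zero finite-dimensional subrepresentation, contradicting weak mixing. Hence every extremal trace, and therefore every trace, on $\rC^*_\pi(\Gamma)$ pulls back to $\delta_e$.

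Consequently, the trace $\omega$ from the first step has GNS representation restricting to $\Gamma$ as $\lambda_\Gamma$, yielding the weak containment $\lambda_\Gamma \prec \pi$, the existence of $\Theta_{\pi,\lambda}$, and $\omega = \tau_\Gamma \circ \Theta_{\pi,\lambda}$. Statement~(i) is immediate since any trace is determined by its values on the generators $\pi(\Gamma)$, which are forced to be $\delta_e$. For~(ii), given any proper closed two-sided ideal $I \subset \rC^*_\pi(\Gamma)$, lift any state on $\rC^*_\pi(\Gamma)/I$ to a state on $\rC^*_\pi(\Gamma)$ vanishing on $I$; the conjugation invariance of $I$ keeps the averaging above within the annihilator of $I$, so it produces a stationary state, hence a trace, still vanishing on $I$. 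By~(i) this trace equals $\tau_\Gamma \circ \Theta_{\pi,\lambda}$, and faithfulness of $\tau_\Gamma$ on $\rC^*_\lambda(\Gamma)$ forces $I \subset \ker \Theta_{\pi,\lambda}$. The main obstacle is the property~$(T)$ step: weak containment of a finite-dimensional representation in the weakly mixing $\pi$ must be promoted to a genuine invariant vector in $\pi \otimes \bar\pi$, which is the sole nontrivial input beyond the direct applications of Theorems~\ref{main stationary characters} and~\ref{main peterson}.
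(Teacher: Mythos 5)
Your proof is correct, and the global strategy (stationary state $\to$ Theorem \ref{main stationary characters} $\to$ existence of a trace; Theorem \ref{main peterson} $+$ property (T) $+$ weak mixing $\to$ every trace pulls back to $\delta_e$) is the same as the paper's, but both technical steps are executed by genuinely different means. To identify the traces, the paper first invokes Wang's theorem that a property (T) group has only countably many finite-dimensional representations up to equivalence, writes an arbitrary character of $\Gamma$ as a countable convex combination $\sum_n \alpha_n \tau_n$ of the extreme characters, and proves via a Cauchy--Schwarz norm estimate that each $\pi_n$ with $\alpha_n \neq 0$ is weakly contained in $\pi$; you instead work with \emph{extremal traces} of $\rC^*_\pi(\Gamma)$ directly, where the weak containment of the finite-dimensional GNS representation in $\pi$ is automatic (it factors through $\rC^*_\pi(\Gamma)$), and then pass to general traces by Krein--Milman. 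This avoids Wang's theorem and the norm estimate entirely, at the cost of the (standard) identification of extremal traces with factorial GNS representations. For (ii), the paper passes to the quotient representation $\Gamma \to \mathcal U(\rC^*_\pi(\Gamma)/I)$, shows it is weakly mixing by its Lemma \ref{lem:(T)}, and reapplies the first part to factor $\Theta_{\pi,\lambda}$ through the quotient; you instead average states annihilating $I$ (using that $I$ is $\Ad(\pi(\Gamma))$-invariant) to produce a trace vanishing on $I$ and conclude from uniqueness of the trace together with faithfulness of $\tau_\Gamma$ on $\rC^*_\lambda(\Gamma)$. Both of your variants are valid, and arguably cleaner; what the paper's route buys is the explicit countable decomposition of characters, which it reuses elsewhere in its discussion.
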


The proof of Corollary \ref{main rep} relies on the following simple observation that was recently exploited  by Hartman--Kalantar \cite{HK17}. Starting from a unitary representation $\pi : \Gamma \to \mathcal U(H_\pi)$, the conjugation action $\Gamma \curvearrowright \rC^*_\pi(\Gamma)$ possesses a $\mu_0$-stationary state $\phi$. We can thus apply Theorem \ref{main stationary characters} to deduce that the $\mu_0$-character $\phi \circ \pi$ is actually a character. Then Theorem \ref{main peterson} allows to conclude.

Corollary \ref{main rep} was previously known only for particular examples of weakly mixing unitary representations (see e.g.\ \cite{Be95,BK19}). Let us point out that any countable group $\Lambda$ satisfying the conclusion of Corollary \ref{main rep} is {\em just infinite} in the sense that any nontrivial normal subgroup has finite index; moreover, any faithful properly ergodic pmp action $\Lambda \curvearrowright (X, \nu)$ is essentially free; also, $\Lambda$ is {\em character rigid} in the sense of Theorem \ref{main peterson}. Thus, one might also regard Corollary \ref{main rep} as a strengthening of Margulis' normal subgroup theorem (\cite[Chapter IV]{Ma91}), Stuck--Zimmer's rigidity result \cite{SZ92} and Peterson's character rigidity result \cite{Pe14}. 

Let us now turn our attention to applications of Theorems \ref{main stationary characters} and \ref{main peterson} to ergodic theory. As a straightforward consequence of Theorem \ref{main NCNZ} and Stuck--Zimmer's result \cite[Corollary 4.4]{SZ92}, we obtain the following structure theorem for stationary actions of irreducible lattices.

\begin{letterthm}\label{letterthm:actions}
Let $G$ be as in the notation. Let $\Gamma < G$ be any irreducible lattice and $\mu_0 \in \Prob(\Gamma)$ any Furstenberg probability measure. Let $(X, \nu)$ be any ergodic $(\Gamma, \mu_0)$-space. Then the following dichotomy holds.
\begin{itemize}
\item Either $\nu$ is $\Gamma$-invariant.
\item Or there exist a proper parabolic subgroup $P \subset Q\subsetneq G$ and a $\Gamma$-equivariant measurable factor map $(X, \nu) \to (G/Q, \nu_Q)$.
\end{itemize}
Moreover, if the action $\Gamma \curvearrowright (X, \nu)$ is faithful and properly ergodic, then it is essentially free.
\end{letterthm}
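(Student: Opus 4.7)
The plan is to derive the dichotomy directly from Theorem \ref{main NCNZ}, applied to the commutative $(\Gamma,\mu_0)$-von Neumann algebra $\rL^\infty(X,\nu)$, and then handle the moreover statement by splitting into the two cases: in the measure-preserving case apply Stuck--Zimmer, and in the stationary case combine the observation that stabilizers decrease along factor maps with a fixed-point subvariety argument on $G/Q$.

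First I would set $M = \rL^\infty(X, \nu)$ and take $\phi \in M_\ast$ to be integration against $\nu$. Ergodicity of $\Gamma \curvearrowright (X,\nu)$ is equivalent to $M^\Gamma = \C 1$, and $\mu_0$-stationarity of $\nu$ is equivalent to $\mu_0$-stationarity of $\phi$, so $(M,\phi)$ is an ergodic $(\Gamma,\mu_0)$-von Neumann algebra and Theorem \ref{main NCNZ} applies. It yields either that $\phi$ (equivalently $\nu$) is $\Gamma$-invariant, or that there is a proper parabolic $P \subset Q \subsetneq G$ together with a $\Gamma$-equivariant normal unital $\ast$-embedding $\theta : \rL^\infty(G/Q, \nu_Q) \to \rL^\infty(X, \nu)$ with $\phi \circ \theta = \nu_Q$. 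Via the standard duality between separable commutative von Neumann algebras and standard probability spaces, such a $\theta$ is induced by a $\Gamma$-equivariant measurable factor map $(X, \nu) \to (G/Q, \nu_Q)$, giving the dichotomy.

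For the moreover assertion, assume that $\Gamma \curvearrowright (X,\nu)$ is faithful and properly ergodic. If $\nu$ is $\Gamma$-invariant, then Stuck--Zimmer \cite[Corollary 4.4]{SZ92} applies directly to this pmp faithful properly ergodic action of the higher-rank irreducible lattice $\Gamma$ and gives essential freeness. Otherwise, let $\pi : (X, \nu) \to (G/Q, \nu_Q)$ be the factor map produced by the second alternative. Since $\Stab_\Gamma(x) \subset \Stab_\Gamma(\pi(x))$ for $\nu$-almost every $x$, it suffices to show that $\Gamma \curvearrowright (G/Q, \nu_Q)$ is essentially free. For each $\gamma \in \Gamma$ not lying in $N := \bigcap_{g \in G} gQg^{-1}$, the fixed point set $\{gQ \mid g^{-1}\gamma g \in Q\}$ is a proper real algebraic subvariety of $G/Q$, hence $\nu_Q$-null since $\nu_Q$ lies in the smooth measure class; taking the countable union over such $\gamma$ leaves a $\nu_Q$-conull set on which the stabilizer equals $\Gamma \cap N$.

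The main obstacle is therefore to rule out the contribution of the kernel subgroup $\Gamma \cap N$, which is a priori nontrivial. This subgroup is normal in $\Gamma$ and contained in the proper normal subgroup $N \trianglelefteq G$ (here using $Q \subsetneq G$), so by Margulis' normal subgroup theorem it is finite, hence contained in the centre. To conclude that it is in fact trivial, I would disintegrate $\nu = \int_{G/Q} \nu_{gQ}\, \d\nu_Q(gQ)$ along $\pi$, observe that any $\gamma \in \Gamma \cap N$ preserves each fiber set-wise and acts by pmp transformations on almost every fiber measure $\nu_{gQ}$ (since the Radon--Nikodym cocycle of $\nu$ along $\Gamma \cap N$ is a lift from the base, on which $\Gamma \cap N$ acts trivially), and then use faithfulness of $\Gamma \curvearrowright (X, \nu)$ together with a Mackey-type ergodic decomposition (exploiting normality of $\Gamma \cap N$ in $\Gamma$) to force any nontrivial element of $\Gamma \cap N$ to violate either faithfulness or proper ergodicity of the ambient action.
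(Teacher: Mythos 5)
Your derivation of the dichotomy (apply Theorem \ref{main NCNZ} to $\rL^\infty(X,\nu)$ and dualize) and your treatment of the measure-preserving case via Stuck--Zimmer coincide with the paper's proof. Your handling of elements $\gamma \notin N := \bigcap_{g \in G} gQg^{-1}$ by observing that their fixed-point set in $G/Q$ is a proper algebraic subvariety, hence $\nu_Q$-null, is a legitimate variant of the paper's Lemma \ref{lem:free}, which instead quotes Ozawa's observation that $\dim(G)+1$ generic conjugates of $Q$ intersect exactly in $N$. The genuine gap is in your final step. You set out to prove that $\Gamma \cap N$ is \emph{trivial}, but this is neither true in general nor what is needed: $G$ is only assumed to have finite (not trivial) center, every parabolic subgroup contains $Z(G)$, so $Z(\Gamma) = \Gamma \cap Z(G) \subset \Gamma \cap N$, and a faithful properly ergodic action of $\Gamma$ can perfectly well have $Z(\Gamma) \neq \{e\}$. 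The proposed disintegration/Mackey argument therefore cannot deliver triviality of $\Gamma\cap N$, and it also does not address the point that actually remains, namely that $\nu(\Fix(\gamma)) = 0$ for each nontrivial $\gamma \in \Gamma \cap N$ (faithfulness alone only gives $\nu(\Fix(\gamma)) < 1$).

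The correct finish is short and is what the paper does. The intersection of the irreducible lattice $\Gamma$ with the proper closed normal subgroup $N \lhd G$ is central (this is the first paragraph of the proof of Lemma \ref{lem:free}), so every $\gamma \in \Gamma \cap N$ lies in $Z(\Gamma)$. For such $\gamma$ the set $\Fix(\gamma)$ is $\Gamma$-invariant, since $\gamma$ commutes with every element of $\Gamma$; ergodicity then forces $\nu(\Fix(\gamma)) \in \{0,1\}$, and the value $1$ is excluded by faithfulness. Hence $\nu(\Fix(\gamma)) = 0$ and the action is essentially free. Note that proper ergodicity plays no role in this branch of the argument (it is only needed to invoke Stuck--Zimmer in the invariant case), so your attempt to derive a contradiction with proper ergodicity is a red herring.
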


Finally, we apply Theorem \ref{letterthm:actions} to topological dynamics. Denote by $\Sub(\Gamma)$ the compact metrizable space of all subgroups of $\Gamma$ endowed with the Chabauty topology. Define the conjugation action $\Gamma \curvearrowright \Sub(\Gamma)$ by $\gamma \cdot \Lambda = \gamma \Lambda \gamma^{-1}$ for every $\gamma \in \Gamma$ and every $\Lambda \in \Sub(\Gamma)$. In the measurable setting, following \cite{AGV12}, an {\em invariant random subgroup} (IRS for short) is a conjugation invariant Borel probability measure $\nu \in \Prob(\Sub(\Gamma))$. By Stuck--Zimmer's result \cite[Corollary 4.4]{SZ92}, any ergodic IRS of an irreducible lattice $\Gamma < G$, where $G$ is as in the notation and with trivial center, is finite. In the topological setting, following \cite{GW14}, a {\em Uniformly Recurrent Subgroup} (URS for short) is a closed minimal $\Gamma$-invariant subset $X \subset \Sub(\Gamma)$. The study of URSs has received a lot of attention recently due to its connections with IRSs (\cite{7s12, Ge14}) and $\rC^*$-simplicity (\cite{Ke15, LBMB16}). 

Recall that an action by homeomorphisms $\Lambda \curvearrowright X$ of a countable discrete group $\Lambda$ on a compact metrizable space $X$ is {\em topologically free} if for any $\gamma \neq e$, the closed subset $\Fix(\gamma) = \{x \in X \mid \gamma x = x\}$ of $\gamma$-fixed points has empty interior in $X$. The next corollary provides a topological analogue of Stuck--Zimmer's rigidity result \cite{SZ92} and answers positively a question raised by Glasner--Weiss (see \cite[Problem 5.4]{GW14}).

\begin{lettercor}\label{lettercor:URS}
Let $G$ be as in the notation and assume moreover that $G$ has trivial center. Let $\Gamma < G$ be any irreducible lattice. For any minimal action $\Gamma \curvearrowright X$ on a compact metrizable space, either $X$ is finite or the action is topologically free. In particular, any {\em URS} of $\Gamma$ is finite.
\end{lettercor}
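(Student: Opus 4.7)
The plan is to first establish the dichotomy for arbitrary minimal actions as a direct consequence of Theorem \ref{letterthm:actions}, and then deduce the URS statement via a Baire category argument.

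For the first part, I start with a minimal action $\Gamma \actson X$ on a compact metrizable space and fix a Furstenberg probability measure $\mu_0 \in \Prob(\Gamma)$. The compact convex set of $\mu_0$-stationary measures on $X$ is nonempty by Markov--Kakutani, so I take an extreme (hence ergodic) point $\nu$. Since $\supp \mu_0 = \Gamma$, the support of $\nu$ is $\Gamma$-invariant and closed, and minimality forces $\supp \nu = X$. Applying Theorem \ref{letterthm:actions} to the ergodic $(\Gamma, \mu_0)$-space $(X, \nu)$ yields two cases. If $\nu$ is $\Gamma$-invariant, Stuck--Zimmer's theorem (equivalently, the ``moreover'' clause of Theorem \ref{letterthm:actions}) produces a sub-dichotomy: either the action is essentially transitive, in which case $X$ is finite by full support of $\nu$, or the action is essentially free, so $\nu(\Fix(\gamma)) = 0$ for every $\gamma \neq e$. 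If instead there is a $\Gamma$-equivariant measurable factor $p : (X,\nu) \to (G/Q, \nu_Q)$ for a proper parabolic $Q \subsetneq G$, then for any $\gamma \neq e$ the set $\Fix_{G/Q}(\gamma)$ is a proper Zariski-closed subvariety of the projective variety $G/Q$ and hence $\nu_Q$-null, so pulling back via $p$ (using $\Gamma$-equivariance almost everywhere) also gives $\nu(\Fix_X(\gamma)) = 0$. In every nontrivial subcase the closed set $\Fix(\gamma)$ is $\nu$-null while $\supp\nu = X$, which forces $\Fix(\gamma)$ to have empty interior; this is topological freeness.

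For the URS statement, I apply the above dichotomy to the minimal action $\Gamma \actson \mathcal{U}$ for a URS $\mathcal{U} \subseteq \Sub(\Gamma)$. Assume toward contradiction that $\mathcal{U}$ is infinite; then the action is topologically free. The key observation is that $\gamma \in H$ implies $\gamma H \gamma^{-1} = H$, so for every $\gamma \neq e$ the closed set $O_\gamma := \{H \in \mathcal{U} : \gamma \in H\}$ is contained in $\Fix_{\mathcal{U}}(\gamma)$ and therefore has empty interior. Since $\mathcal{U}$ is a compact metrizable space, the Baire category theorem makes the countable union $\bigcup_{\gamma \neq e} O_\gamma$ meager, so its complement $\{H \in \mathcal{U} : H = \{e\}\}$ is nonempty. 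Hence $\{e\} \in \mathcal{U}$, and by minimality (as $\{e\}$ is $\Gamma$-fixed in $\Sub(\Gamma)$) we get $\mathcal{U} = \{\{e\}\}$, contradicting infiniteness.

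The main obstacle will be the factor-map case: verifying that every $\gamma \in \Gamma \setminus \{e\}$ acts nontrivially on $G/Q$, so that $\Fix_{G/Q}(\gamma)$ is genuinely a proper subvariety. This amounts to checking $\Gamma \cap K = \{e\}$ for $K := \bigcap_{g \in G} gQg^{-1}$, which is a proper closed normal subgroup of $G$ because $Q$ is a proper parabolic. I expect to deduce triviality of $\Gamma \cap K$ from irreducibility of $\Gamma$ together with $Z(G) = \{e\}$ via Margulis' normal subgroup theorem applied to the normal subgroup $\Gamma \cap K$ of $\Gamma$ (a finite-index intersection with a proper factor would contradict irreducibility, while a finite one is central hence trivial). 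Everything else is routine given Theorem \ref{letterthm:actions}, Stuck--Zimmer, and the Baire category theorem.
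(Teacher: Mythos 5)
Your proof is correct and follows essentially the paper's route: an extreme $\mu_0$-stationary measure has full support by minimality, Theorem \ref{letterthm:actions} together with Stuck--Zimmer and the essential freeness of $\Gamma \curvearrowright (G/Q,\nu_Q)$ (this is exactly Lemma \ref{lem:free}, which you may cite instead of re-deriving) yield the dichotomy, and a Baire category argument disposes of the URS statement. The only point to tighten is your parenthetical in the invariant-measure case: the ``moreover'' clause of Theorem \ref{letterthm:actions} presupposes faithfulness and proper ergodicity, so the sub-dichotomy ``essentially transitive or essentially free'' additionally requires observing that a non-faithful ergodic pmp action factors through a finite quotient (Margulis' normal subgroup theorem) and that an atomic invariant ergodic measure is supported on a single finite orbit --- precisely the two supplementary cases the paper's proof treats explicitly.
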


\subsection*{Acknowledgments} It is our pleasure to thank Uri Bader, Bachir Bekka and Yves Benoist for useful discussions, and Narutaka Ozawa and Stefaan Vaes for their valuable remarks. We would like also to thank the organizers of the conference {\em Dynamics of group actions} held in Cetraro (Italy) in May 2019 as well as the organizers of the Workshop $\rC^*$-{\em algebras} held in Oberwolfach (Germany) in August 2019, where parts of this work were done. Last but not least, we are greatful to the two referees for their careful reading and for providing insightful remarks that helped improve the exposition of the paper.

{
  \hypersetup{linkcolor=black}
  \tableofcontents
}

\section{Preliminaries}

\subsection{Group actions on von Neumann algebras}\label{subsection:vN}

This paper appeals to the theory of von Neumann algebras and C*-algebras. While we will not recall the very basic definitions of the theory, we will try to keep the pre-requisite at a minimum.

In this section, $G$ will be any locally compact second countable (lcsc) group and $\mu \in \Prob(G)$ any {\em admissible} Borel probability measure. This means that the support of $\mu$ generates $G$ as a semigroup and contains the trivial element in its interior, and that $\mu$ is absolutely continuous with respect to the Haar measure on $G$.

Let $A$ be any separable unital $\rC^*$-algebra. We denote by $\cS(A)$ the set of all states on $A$, i.e.\ of all linear functionals $\phi: A \to \C$ such that $\phi(x^*x) \geq 0$ for all $x \in A$ ({\em positive}), and $\phi(1) = 1$ ({\em normalized}). It is a compact convex subset of $A^*$ for the weak$^*$-topology.

We endow $A$ with its norm topology. We say that an action $\sigma : G \curvearrowright A$ by $\ast$-automorphisms is continuous (or is a {\em $\rC^*$-action}) if the map $G \times A \to A : (g, a) \mapsto \sigma_g(a)$ is continuous. The action $G \curvearrowright A$ induces a continuous affine action $G \curvearrowright \mathcal S(A)$, called the push-forward action, defined by the formula 
$$\forall g \in G, \forall\psi \in \mathcal S(A), \forall a \in A, \quad g \psi (a) = \psi(\sigma_g^{-1}(a)).$$
We say that a state $\psi \in \mathcal S(A)$ is $\mu$-{\em stationary} if for every $a \in A$, we have
$$\psi(a) = \int_G g \psi(a) \, {\rm d}\mu(g).$$

Let $M$ be any von Neumann algebra with separable predual $M_\ast$. We endow $M$ with the ultraweak (i.e.\ weak$^*$) topology coming from the canonical identification $M = (M_\ast)^*$.  In this manner, $M_*$ is identified with the subset of $M^*$ consisting of all ultraweakly continuous linear functionals, also called {\em normal} linear functionals. We say that an action $\sigma : G \curvearrowright M$ by $\ast$-automorphisms is continuous (or is a von Neumann action) if the map $G \times M \to M : (g, x) \mapsto \sigma_g(x)$ is continuous. By \cite[Proposition X.1.2]{Ta03a}, this definition is equivalent to saying that for every  $x \in M$ and every $\phi \in M_\ast$, the map $G \to \C : g \mapsto \phi(\sigma_g(x))$ is continuous. In this case, the action $G \curvearrowright M$ induces an affine norm continuous action $G \curvearrowright M_\ast$ defined by the formula 
\[\forall g \in G, \forall\phi \in M_\ast, \forall x \in M, \quad g \phi (x) = \phi(\sigma_g^{-1}(x)).\]
We say that a normal state $\phi \in M_\ast$ is $\mu$-{\em stationary} if for every $x \in M$, we have
\[\phi(x) = \int_G g \phi(x) \, {\rm d}\mu(g).\]
In that case, we say that $(M, \phi)$ is a $(G, \mu)$-von Neumann algebra. 

A reader unfamiliar with these notions should keep in mind the commutative examples of the C*-algebra $\mathrm C(X)$ consisting of continuous functions on a compact metrizable space $X$, and the von Neumann algebra $M = \mathrm L^\infty(X,\mu)$ associated to a measure $\mu$ on $X$. In this case, a state on $A$ corresponds to a Borel probability measure on $X$, while a normal state on $M$ corresponds to a probability measure which is absolutely continuous with respect to $\mu$. A C*-action $G \actson A$ corresponds to a continuous action $G \actson X$, while a continuous von Neumann action $G \actson M$ corresponds to a non-singular measurable action $G \actson (X,\mu)$.

The two categories offer different advantages: 
\begin{itemize}
\item In the category of unital $\rC^*$-algebras the weak$^*$-compactness of the state space is very useful (see Example \ref{exmp:C*1} below), whereas the convex space of normal states on a von Neumann algebra does not carry any natural Hausdorff compact topology.
\item On the other hand, the category of von Neumann algebras is more manageable to work with, because of its measurable nature. In this respect, our noncommutative Nevo--Zimmer theorem, Theorem \ref{thm:NZ} below, does not have a $\rC^*$-analogue.
\end{itemize}

It will be useful to switch between these two categories. On the one hand, given a unital C*-algebra $A$, the GNS construction associates to every state $\phi \in \cS(A)$ a unique C*-representation $\pi_\phi: A \to \mathbf B(H)$ and we thus get a von Neumann algebra $M = \pi_\phi(A)\dpr$. Some aspects of this construction are discussed in Section \ref{Section AC}. On the other hand, given a $G$-von Neumann algebra $M$, a regularisation argument can be used to find in $M$ an ultraweakly dense unital $\rC^*$-subalgebra $A$ on which the action is norm continuous, see \cite[Proposition XIII.1.2]{Ta03b}. Since $G$ is assumed to be second countable, if $M$ has separable predual, we may find a separable such $\rC^*$-algebra $A$. In analogy with the commutative setting, $A$ can be called a {\em compact model} of $M$.

Let us further mention specificities of the von Neumann category. A normal state $\phi$ on a von Neumann algebra $M$ is said to be {\em faithful} if $\phi(x^*x) \neq 0$ for every nonzero $x \in M$. This definition also makes sense for any state on a $\rC^*$-algebra, but in the von Neumann algebraic case, faithfulness can be measured by a projection in $M$, called the {\em support} of $\phi$. By definition it is the smallest projection $q \in M$ such that $\phi(q) = 1$. One checks that $\phi$ is faithful on $M$ if and only if $q = 1$.

Given a continuous von Neumann action $G \actson M$, we denote by $M^G = \{x \in M \mid \sigma_g(x) = x, \forall g \in G\}$ the fixed-point von Neumann subalgebra. We say that the action $G \curvearrowright M$ is {\em ergodic} if $M^G = \C 1$. We will need the following useful result. 

\begin{lem}\label{lem:support}
Let $(M, \phi)$ be any $(G, \mu)$-von Neumann algebra. Then the support $q$ of $\phi$ is a $G$-invariant projection. In particular, if the action $G \curvearrowright M$ is ergodic, then $\phi$ is faithful.
\end{lem}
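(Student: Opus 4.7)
The plan is to show that $\sigma_g(q) \le q$ for every $g$ in the support of $\mu$, then bootstrap to $\sigma_g(q) = q$ for every $g \in G$ using the semigroup generation property of $\supp(\mu)$. Recall the characterization of the support: $q$ is the smallest projection with $\phi(q) = 1$, equivalently, a projection $p' \in M$ satisfies $p' \le q$ whenever $\phi(p') = \phi(q)$, and a projection $r \in M$ satisfies $r \le 1-q$ whenever $\phi(r) = 0$.

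The key computation comes from applying the stationarity identity to the projection $1 - q$:
\[
0 = \phi(1-q) = \int_G \phi(\sigma_g^{-1}(1-q)) \dd\mu(g).
\]
Each integrand is non-negative since $\sigma_g^{-1}(1-q)$ is a projection and $\phi$ is a state, so $\phi(\sigma_g^{-1}(1-q)) = 0$ for $\mu$-almost every $g \in G$. By the characterization of the support, this forces $\sigma_g^{-1}(1-q) \le 1-q$ for $\mu$-a.e.\ $g$, equivalently $\sigma_g(q) \le q$ for $\mu$-a.e.\ $g$.

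Now I upgrade this to hold for every $g \in \supp(\mu)$. The map $g \mapsto \phi(\sigma_g^{-1}(1-q))$ is continuous by continuity of the von Neumann action $G \curvearrowright M$ together with normality of $\phi$, so the set $F = \{g \in G : \phi(\sigma_g^{-1}(1-q)) = 0\}$ is closed. Its complement is open and $\mu$-null, hence disjoint from $\supp(\mu)$ (any point of $\supp(\mu)$ has every open neighborhood of positive $\mu$-measure). Therefore $\supp(\mu) \subseteq F$, and consequently $\sigma_g(q) \le q$ for every $g \in \supp(\mu)$. Since the set $\{g \in G : \sigma_g(q) \le q\}$ is stable under composition (because $\sigma_{gh}(q) = \sigma_g(\sigma_h(q)) \le \sigma_g(q) \le q$) and $\supp(\mu)$ generates $G$ as a semigroup, the inequality $\sigma_g(q) \le q$ holds for every $g \in G$. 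Applied to $g^{-1}$ (also a product of elements of $\supp(\mu)$), this yields $\sigma_{g^{-1}}(q) \le q$, i.e.\ $q \le \sigma_g(q)$, so $\sigma_g(q) = q$ for every $g \in G$.

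Finally, if the action $G \curvearrowright M$ is ergodic, then $M^G = \C 1$, so the $G$-invariant projection $q$ is $0$ or $1$; since $\phi(q) = 1 \neq 0$, we conclude $q = 1$, i.e.\ $\phi$ is faithful. The only delicate point in the argument is the continuity/full-measure step ensuring the passage from a $\mu$-a.e.\ statement to a statement on all of $\supp(\mu)$, but this is handled cleanly by normality of $\phi$ and the assumption that $\mu$ is absolutely continuous with respect to Haar measure.
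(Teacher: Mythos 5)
Your proof is correct and follows essentially the same route as the paper: stationarity plus positivity of the integrand gives the support inequality for $\mu$-almost every $g$, continuity of $g \mapsto \phi(\sigma_g^{-1}(\cdot))$ upgrades it to all of $\supp(\mu)$, and generation of $G$ by $\supp(\mu)$ together with the inequality applied at $g$ and $g^{-1}$ yields $G$-invariance of $q$. The only cosmetic difference is that the paper passes from $\supp(\mu)$ to $G$ via $\mu^{\ast n}$-stationarity and $\supp(\mu)^n \subset \supp(\mu^{\ast n})$, whereas you observe directly that $\{g \in G \mid \sigma_g(q) \leq q\}$ is a sub-semigroup; both are equally valid.
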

\begin{proof}
Since $\phi \in M_\ast$ is $\mu$-stationary, we have 
$$1 = \phi(q) =  \int_G g \phi(q) \, {\rm d}\mu(g).$$
Since for every $g \in G$, $0 \leq g\phi(q) \leq 1$, it follows that for $\mu$-almost every $g \in G$, we have $g\phi(q) = 1$. Since the map $G \to \C : g \mapsto g\phi(q)$ is continuous, it follows that $g\phi(q) = 1$ for every $g \in \supp(\mu)$. Since $\phi \in M_\ast$ is $\mu^{\ast n}$-stationary for every $n \geq 1$, the same argument shows that $g\phi(q) = 1$ for every $n \geq 1$ and every $g \in \supp(\mu^{\ast n})$. Since $\supp(\mu)^n \subset \supp(\mu^{\ast n})$ for every $n \geq 1$ and since $\bigcup_{n \geq 1} \supp(\mu)^n = G$, it follows that $g\phi(q) = 1$ for every $g \in G$. By definition of $q = \supp(\phi)$, this implies that $q \leq \sigma_g^{-1}(q)$ for every $g \in G$. Applying this inequality at an element $g$ and its inverse, we find $q = \sigma_g(q)$ for every $g \in G$, i.e.\ $q \in M^G$.
\end{proof}

We discuss general facts regarding induction in von Neumann algebraic framework (we refer to \cite[Section X.4]{Ta03a} for further details). Let $G$ be any lcsc group and $H< G$ any closed subgroup. Denote by $\lambda : G \curvearrowright \rL^\infty(G)$ (resp.\ $\rho : G \curvearrowright \rL^\infty(G)$) the {\em left} (resp.\ {\em right}) translation action. We fix a $G$-quasi-invariant Borel probability measure $\nu_{G/H} \in \Prob(G/H)$. Let $N$ be any von Neumann algebra with separable predual and $\sigma : H \curvearrowright N$ any continuous action. Regard $\rL^\infty(G) \ovt N$ as the von Neumann algebra $\rL^\infty(G, N)$ of all essentially bounded measurable functions $f : G \to N$, modulo equality Haar-almost everywhere. We define the {\em induced von Neumann algebra} $\Ind_H^G(N)$ by the formula
$$\Ind_H^G(N) = \left\{f \in \rL^\infty(G) \ovt N \mid \forall h \in H, (\rho_h \otimes \id_N)(f) = (\id_G \otimes \sigma_h^{-1})(f) \right\}.$$ 
We define the continuous {\em induced action} $\Ind(\sigma) : G \curvearrowright \Ind_H^G( N)$  by the formula 
\[ \forall g \in G, \quad \Ind(\sigma)_g = \lambda_g \otimes \id_N.\]
Note that a $G$-invariant element in $\Ind_H^G(N)$ is in particular invariant inside $\rL^\infty(G) \ovt N$, so it belongs to $(\C 1 \ot N) \cap \Ind_H^G(N) = \C 1 \ot N^H$. In particular, the action $\sigma : H \curvearrowright N$ is ergodic if and only if the induced action $\Ind(\sigma) : G \curvearrowright \Ind_H^G(N)$ is ergodic. 

Let us also give a useful alternative description of the induced action. Fix a measurable section $\tau : G/H \to G$ of the natural projection $G \to G/H$ such that $\tau(H) = e$ and denote by $c_\tau : G \times G/H \to H$ the measurable $1$-cocycle associated with $\tau$. By definition, we have $c_\tau(g, w) = \tau(gw)^{-1} g \tau(w)$ for every $g \in G$ and every $w \in G/H$. Regard $\rL^\infty(G/H) \ovt N$ as the von Neumann algebra $\rL^\infty(G/H, N)$ of all essentially bounded measurable functions $f : G/H \to N$, modulo equality $\nu_{G/H}$-almost everywhere. Using $\tau$, we may define a continuous action $\beta^{\tau} : G \curvearrowright \rL^\infty(G/H) \ovt N$ by the formula
$$\forall F \in \rL^\infty(G/H) \ovt N, \forall g \in G, \quad (\beta^{ \tau} )_g(F)(w) = \sigma_{c_\tau(g, g^{-1} w)}(F(g^{-1}w)).$$ 
Using $\tau$, we may also construct a surjective normal unital $\ast$-isomorphism $\pi^{ \tau} : \rL^\infty(G/H) \ovt N \to \Ind_H^G(N)$ by the formula
$$\forall F \in \rL^\infty(G/H) \ovt N, \forall g \in G,  \quad \pi^{ \tau}(F)(g) = \sigma_{c_\tau(g^{-1}, gH)}(F(gH)).$$
Then $\pi^{\tau}$ intertwines the continuous actions $\beta^{\tau}$ and $\Ind(\sigma)$ in the following sense:
$$\forall F \in \rL^\infty(G/H) \ovt N, \forall g \in G, \quad \Ind(\sigma)_g(\pi^{\tau}(F)) = \pi^{ \tau}((\beta^{\tau})_g(F)).$$
Assume that $\psi \in N_\ast$ is a $H$-invariant normal state. Then the formula $(\nu_{G/H} \otimes \psi) \circ (\pi^{\tau})^{-1}$ defines a normal state on $\Ind_H^G(N)$ that does {\em not} depend on the choice of the measurable section $\tau$. We call it the {\em canonical} normal state on $\Ind_H^G(N)$ associated with $\nu_{G/H}$ and $\psi$ and simply denote it by $\nu_{G/H} \otimes \psi$. Observe that if $\nu_{G/H}$ is moreover $G$-invariant, then  $\nu_{G/H} \otimes \psi$ is $G$-invariant. When identifying $\Ind_H^G(N)$ with $\rL^\infty(G/H) \ovt N$, this state is given by the formula
\[ \forall f \in \Ind_H^G(N), \quad (\nu_{G/H} \otimes \psi)(f) = \int_{G/H} \psi(f(w)) \dd\nu_{G/H}(w).\]

We will use this general framework in the following two concrete situations. Assuming that $G$ is a connected semisimple Lie group with finite center, we will consider the case when 
\begin{itemize}
\item [$(\rm i)$] $H = \Gamma$ is a lattice subgroup of $G$. In that case, we denote by $m_{G/\Gamma} \in \Prob(G/\Gamma)$ the unique $G$-invariant Borel probability measure on $G/\Gamma$. This situation will appear in Sections \ref{section:induced} and \ref{section:results}.

\item [$(\rm ii)$] $H = P$ is a minimal parabolic subgroup of $G$. In that case, we denote by $\nu_P \in \Prob(G/P)$ the unique $K$-invariant Borel probability measure on $G/P$, where $K < G$ is a fixed maximal compact subgroup so that $G = KP$. Then $\nu_P$ is $G$-quasi-invariant. This situation will appear in Section \ref{section:NZ}.
\end{itemize}

\subsection{Structure theory of $G/P$}\label{subsection:G/P}

We follow the exposition given in \cite[Section 2]{NZ00} (see also \cite[I.1.1, I.1.2, II.\S 3]{Ma91} for further details). Let $G$ be any noncompact connected semisimple Lie group with trivial center and no nontrivial compact factor. Then $G \cong \Ad(G)$ may be viewed as the set of real points of a real algebraic group $\mathbf G$, see for instance \cite[Proposition 3.1.6]{Zi84}.

Let $S < G$ be a maximal $\R$-split torus and $\Phi$ the corresponding root system. We denote by $\Phi^+$ a choice of positive roots, and by $\Delta$ the corresponding set of simple positive roots. For every $\theta \subset \Delta$, denote by $P_\theta < G$ the corresponding parabolic subgroup as described in \cite[Section 2]{NZ00}. We denote by $V_\theta$ its unipotent radical, and by $R_\theta$ its reductive part, which are both connected. We then have the Levi decomposition $P_\theta = R_\theta \ltimes V_\theta$. The center $S_\theta$ of $R_\theta$ is also connected, and $R_\theta$ coincides with the centralizer of $S_\theta$ inside $G$.
We also have the opposite parabolic subgroup $\overline  P_\theta = R_\theta \ltimes \overline V_\theta$ where $\overline V_\theta$ is the unipotent radical of $\overline P_\theta$. We have $P_\theta \cap \overline V_\theta = \{e\}$ and $P_\theta \cap \overline P_\theta = R_\theta$. If $\theta = \emptyset$, then we have $S_\emptyset = S$, and we set $V = V_\emptyset$ and $P = P_\emptyset$, where $P < G$ is a minimal parabolic subgroup. If $\theta = \Delta$, then we have $S_\Delta = \{e\}$, $V_\Delta = \{e\}$ and $P_\Delta = G$.

For $\theta \subset \Delta$, we set
\[S_\theta' = \left\{s \in Z(S_\theta) \mid \Int(s) \text{ is contracting on } V_\theta \quad \text{and} \quad \Int(s)^{-1} \text{ is contracting on } \overline V_\theta \right\}.\]
Note that $S_\theta' \neq \emptyset$ if and only if $\theta \subsetneq \Delta$ if and only if $P_\theta \subsetneq G$. Define also $\overline U_\theta = R_\theta \cap \overline V = P_\theta \cap \overline V$ and observe from the Levi decomposition of $\overline{P_\theta}$ that the product map $\overline V_\theta \rtimes \overline U_\theta \to \overline V : (\overline{v}_\theta,\overline{u}_\theta)  \mapsto \overline{v}_\theta \overline{u}_\theta $ is an isomorphism. This decomposition is nontrivial if and only if $\emptyset \subsetneq \theta \subsetneq \Delta$, if and only if $P \subsetneq P_\theta \subsetneq G$, which can only happen for groups of real rank at least two. 

\subsection{Background on continuous affine actions and Poisson boundaries}\label{subsection:poisson}
We follow the exposition given in \cite[Section 2.VI]{BS04}. Let $G$ be any lcsc group and $\mu \in \Prob(G)$ any admissible Borel probability measure. Following \cite{Fu62a, BS04}, we say that a bounded function $f : G \to \C$ is (right) $\mu$-{\em harmonic} if $f(g) = \int_G f(gh) \, {\rm d}\mu(h)$ for every $g \in G$. Any bounded $\mu$-harmonic function is necessarily continuous and we denote by $\Har^\infty(G, \mu)$ the space of all bounded (right) $\mu$-harmonic functions. On the probabilistic side, the measure $\mu$ gives rise to a random walk on $G$, whose asymptotic behaviour is encoded by the so-called {\it Poisson boundary} of $(G,\mu)$: it is a measure space $(B,\nu_B)$, where $\nu_B$ is $\mu$-stationary, i.e.\ a $(G,\mu)$-space. Although the Poisson boundary can be concretely constructed in probabilistic terms, the following property characterises it among all $(G,\mu)$-spaces.

\begin{thm}[{\cite[Theorem 2.11]{BS04}}]\label{thm:harmonicity}
The linear map $\pi : \rL^\infty(B, \nu_B) \to \Har^\infty(G, \mu) : f \mapsto \pi(f)$, where $\pi(f)(g) = \int_B f(gw) \, {\rm d}\nu_B(w)$ for every $g \in G$, is isometric, $G$-equivariant and bijective.
\end{thm}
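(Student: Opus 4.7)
My plan is to view $\pi$ as the classical Poisson transform and deduce all three properties from bounded martingale convergence applied to the right $\mu$-random walk on $G$.

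First, the formal properties of $\pi$ are easy. Linearity and the contractive bound $\|\pi(f)\|_\infty \le \|f\|_\infty$ are immediate because each translate $g\nu_B$ is a probability measure. Harmonicity is a Fubini computation using $\mu$-stationarity of $\nu_B$: for every $g \in G$,
\[ \int_G \pi(f)(gh)\,\dd\mu(h) \;=\; \int_G\!\!\int_B f(ghw)\,\dd\nu_B(w)\,\dd\mu(h) \;=\; \int_B f(gw)\,\dd\nu_B(w) \;=\; \pi(f)(g), \]
where the middle equality applies $\mu \ast \nu_B = \nu_B$ to the bounded measurable function $w \mapsto f(gw)$. The $G$-equivariance $\pi(h \cdot f) = h \cdot \pi(f)$ is a direct substitution using the conventions $(h\cdot f)(w) = f(h^{-1}w)$ and $(h \cdot F)(g) = F(h^{-1}g)$.

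Next, for bijectivity and isometry, I would transport the problem to path space. Let $(X_n)_{n \ge 1}$ be i.i.d.\ $\mu$-distributed random variables on some probability space $(\Omega, \mathbf{P})$, and form the right random walk $Z_n = X_1 X_2 \cdots X_n$. The Poisson boundary is characterized (this is essentially how it is constructed in \cite{BS04}) by the existence of a measurable boundary map $\mathrm{bnd} : \Omega \to B$ of law $\nu_B$ such that every bounded random variable measurable with respect to the asymptotic $\sigma$-algebra is of the form $f \circ \mathrm{bnd}$ for a unique $f \in \rL^\infty(B, \nu_B)$. Given $f \in \rL^\infty(B, \nu_B)$, the harmonicity of $\pi(f)$ shows that $M_n := \pi(f)(Z_n)$ is a bounded martingale; by bounded martingale convergence, $M_n \to M_\infty$ almost surely, $M_\infty$ is asymptotic, and a Markov-property argument identifies $M_\infty = f \circ \mathrm{bnd}$.

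From this identification all three conclusions follow. For surjectivity, given $h \in \Har^\infty(G,\mu)$, the bounded martingale $N_n := h(Z_n)$ converges to an asymptotic limit $N_\infty = f \circ \mathrm{bnd}$ for some $f \in \rL^\infty(B, \nu_B)$; conditioning on $Z_0 = g$ yields $h(g) = \mathbf{E}_g[f \circ \mathrm{bnd}] = \pi(f)(g)$. Injectivity follows because $\pi(f) = 0$ forces $M_\infty = 0$ almost surely, hence $f = 0$ $\nu_B$-a.e. The isometry comes from the converse inequality $\|f\|_{\rL^\infty(\nu_B)} = \|M_\infty\|_\infty \le \sup_n \|M_n\|_\infty \le \|\pi(f)\|_\infty$.

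The main obstacle, as I see it, is the careful invocation of the universal property of the Poisson boundary used to identify $M_\infty$ with $f \circ \mathrm{bnd}$. In \cite{BS04} this is essentially built into the construction of $(B, \nu_B)$ as a Mackey point-realization of the asymptotic $\sigma$-algebra of the random walk, so the remaining work is mainly bookkeeping and matching conventions; the other ingredients (Fubini, bounded martingale convergence) are standard.
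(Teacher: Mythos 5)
The paper does not prove this statement: it is quoted verbatim as \cite[Theorem 2.11]{BS04}, so there is no internal argument to compare against. Your sketch is the standard martingale proof of the Poisson representation formula and is correct in outline; it is essentially the argument of the cited reference. The only step that genuinely uses the definition of $(B,\nu_B)$ — and which you rightly flag — is the pair of facts that $\mathbf{E}\left[f\circ\mathrm{bnd}\mid Z_1,\dots,Z_n\right]=\pi(f)(Z_n)$ (so that $M_n$ is the Doob martingale of $f\circ\mathrm{bnd}$ and $M_\infty=f\circ\mathrm{bnd}$ by L\'evy's theorem) and that every bounded asymptotic random variable factors through $\mathrm{bnd}$; both are built into the construction of the Poisson boundary as a point realization of the asymptotic $\sigma$-algebra, so nothing is missing beyond the bookkeeping you describe.
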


In other words, the Poisson boundary is the only $(G,\mu)$ space for which the Poisson transform is an onto-isomorphism. 
Concrete computations of the Poisson boundary can sometimes be made. For example, if $G$ is a semisimple Lie group with a minimal parabolic subgroup $P$ and a maximal compact subgroup $K$, then the Poisson boundary of $(G,\mu)$, where $\mu$ is any left $K$-invariant admissible measure is equal to $G/P$ with its unique $K$-invariant measure, \cite{Fu62a}.

Let $E$ be any separable continuous isometric Banach $G$-module. Let $\mathcal C \subset E^*$ be any non-empty $G$-invariant weak$^*$-compact convex subset, endowed with the corresponding weak$^*$-continuous affine $G$-action. We simply say that $\mathcal C$ is a compact convex affine $G$-space. We denote by $\bary: \Prob(\cC) \to \cC : \nu \mapsto \bary(\nu)$ the {\em barycenter} map defined by $f(\bary(\nu)) = \int_{\cC} f(c) \, {\rm d}\nu(c)$ for every continuous affine function $f \in \Aff(\mathcal C)$. We define the notion of stationary point in the setting of compact convex affine $G$-spaces.

\begin{defn}\label{defn:stationary}
Let $\mathcal C$ be any compact convex affine $G$-space and $c \in \cC$ any point. Denote by $ \mu_c \in \Prob(\mathcal C)$ the push-forward measure of $\mu$ under the continuous orbit map $G \to \mathcal C : g \mapsto gc$. We say that $c$ is {\em $\mu$-stationary} if $c = \bary( \mu_c)$.
\end{defn}

Concretely, an element $c \in \mathcal C$ is $\mu$-stationary if and only if
\begin{equation}\label{stationary element} \forall f \in \Aff(\mathcal C), \quad f(c) = 
\int_G f(gc) \, {\rm d}\mu(g).\end{equation}
It follows from a standard averaging argument that the subset $\mathcal C_\mu \subset \mathcal C$ of $\mu$-stationary points is not empty (see e.g.\ \cite[Lemma 1.2]{Fu62b}).

\begin{exmp}\label{exmp:C*1}
Assume that $E = A$ is a separable unital $\rC^*$-algebra and $\sigma : G \curvearrowright A$ is a C*-action. Denote by $\mathcal C = \mathcal S(A)$ the state space of $A$ and consider the corresponding weak$^*$-continuous affine action $G \curvearrowright \mathcal S(A)$. Then for every state $\phi \in \mathcal S(A)$, $\phi$ is $\mu$-stationary in the sense of Definition \ref{defn:stationary} if and only if $\phi$ is stationary in the sense of Subsection \ref{subsection:vN}. 
\end{exmp}

Let us point out that the above example has no von Neumann analogue: the convex set of normal states on a von Neumann algebra is not weak$^*$-closed in general. It is in fact weak$^*$-dense in the whole state space.

We will need the existence and essential uniqueness of the Poisson boundary map in the context of compact convex affine $G$-spaces (see \cite{Fu62b}).

\begin{thm}[{\cite[Theorem 2.16]{BS04}}]\label{thm:poisson-map}
Denote by $(B, \nu_B)$ the $(G, \mu)$-Poisson boundary. Let $\mathcal C$ be any compact convex  affine $G$-space and $c \in \cC$ any $\mu$-stationary point. Then there exists an essentially unique $G$-equivariant measurable map $\beta_c: B \to \cC$ such that $\bary((\beta_c)_\ast \nu_B) = c$. We say that $\beta_c$ is the {\em boundary} map. 
\end{thm}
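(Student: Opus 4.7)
The plan is to derive both halves of the statement from Theorem \ref{thm:harmonicity}: uniqueness follows straightforwardly from the injectivity part, while existence can be produced either via a martingale convergence argument on the path space of the random walk, or directly from the surjectivity part.

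\emph{Uniqueness.} Let $\beta : B \to \cC$ be any $G$-equivariant measurable map satisfying $\bary(\beta_\ast \nu_B) = c$. Given $f \in \Aff(\cC)$, the function $f \circ \beta \in \rL^\infty(B, \nu_B)$ has Poisson transform
\[
\pi(f \circ \beta)(g) = \int_B f(\beta(gw)) \dd\nu_B(w) = \int_B f(g\cdot\beta(w)) \dd\nu_B(w) = f\bigl(g\cdot \bary(\beta_\ast \nu_B)\bigr) = f(gc),
\]
using successively $G$-equivariance of $\beta$, the fact that $x \mapsto f(gx)$ belongs to $\Aff(\cC)$, and the hypothesis on $\beta$. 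Injectivity of $\pi$ (Theorem \ref{thm:harmonicity}) then pins down $f \circ \beta$ up to $\nu_B$-null sets. Since $E$ is separable, $\cC$ is weak$^*$-metrizable and $\Aff(\cC)$ admits a countable point-separating subfamily, so $\beta$ itself is uniquely determined up to null sets.

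\emph{Existence.} I would construct $\beta_c$ via the random walk on $G$. On the path space $(\Omega, \bP) = (G^{\N}, \mu^{\otimes \N})$ with natural filtration $\cF_n = \sigma(g_1, \ldots, g_n)$, set $Y_n(\omega) = g_1(\omega)\cdots g_n(\omega)\cdot c \in \cC$. Applying the stationarity equation \eqref{stationary element} to the affine continuous function $x \mapsto f(g_1\cdots g_n x)$ shows that $(f(Y_n))_n$ is a bounded $\cF_n$-martingale for every $f \in \Aff(\cC)$, hence converges almost surely and in $\rL^1$. Combined with the weak$^*$-metrizability of $\cC$ and a countable point-separating subfamily of $\Aff(\cC)$, this yields $\bP$-a.s.\ weak$^*$-convergence of $Y_n$ to a measurable $\cC$-valued limit $Y_\infty : \Omega \to \cC$. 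Since $Y_\infty$ is tail-measurable, it factors through the canonical projection $\Omega \to B$ onto the Poisson boundary, producing the desired measurable map $\beta_c : B \to \cC$. Its $G$-equivariance is inherited from the construction, and the identity $\bary((\beta_c)_\ast \nu_B) = c$ is obtained by averaging the relation $f(c) = \int_\Omega f(Y_n) \dd\bP$ and passing to the limit in $n$.

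\emph{Main obstacle.} The subtle step in the existence part is passing from the a.s.\ convergence of each scalar martingale $f(Y_n)$ to the weak$^*$-convergence of the $\cC$-valued sequence $Y_n$. This uses the weak$^*$-metrizability of $\cC$ (hence the separability of $E$) and a countable point-separating subfamily of $\Aff(\cC)$, applied via a diagonal argument. An alternative, bypassing the random walk entirely, is to invoke Theorem \ref{thm:harmonicity} directly: the assignment $f \mapsto \tilde h_f$, where $\tilde h_f \in \rL^\infty(B, \nu_B)$ is the unique preimage under $\pi$ of the harmonic function $g \mapsto f(gc)$, is linear, norm-decreasing and $G$-equivariant; evaluating it pointwise on a countable dense linear subspace of $\Aff(\cC)$ produces, on a conull subset of $B$, a continuous linear functional which extends uniquely to an element $\beta_c(w) \in \cC$, and one checks routinely that the resulting map has the required properties.
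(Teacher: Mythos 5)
The paper does not actually prove this statement: it is imported verbatim from \cite[Theorem 2.16]{BS04} (ultimately going back to Furstenberg), so there is no in-paper argument to compare yours against. Your proposal is a correct reconstruction of the standard proof. The uniqueness half is clean: the computation $\pi(f\circ\beta)(g)=f(gc)$, the injectivity of the Poisson transform from Theorem \ref{thm:harmonicity}, and a countable point-separating subfamily of $\Aff(\cC)$ (available because $E$ is separable, so the weak$^*$-compact set $\cC$ is metrizable and a countable dense subset of $E$ separates its points) do pin $\beta$ down up to $\nu_B$-null sets. In the existence half two points deserve slightly more care than you give them. First, the factorization of $Y_\infty$ through $B$ rests on the fact that $Y_\infty$ is a limit of functions of the position $x_n=g_1\cdots g_n$ alone and is therefore invariant under the time shift of the trajectory; the Poisson boundary is by construction the quotient of path space by the shift-invariant $\sigma$-algebra, so ``tail-measurable'' is not quite the right invariance to invoke, though it causes no harm for limits of this form. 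Second, in your alternative route via surjectivity of $\pi$, the only genuinely nontrivial check is that the a.e.-defined functional $\beta_c(w)$ lands in $\cC$ rather than merely in $E^*$; this needs either an order-preservation property of $\pi^{-1}$ applied to the countably many weak$^*$-closed half-spaces cutting out $\cC$, or a fallback to the martingale construction, where $\beta_c(w)$ is automatically a weak$^*$ limit of the points $g_1\cdots g_n c$ of the weak$^*$-closed set $\cC$. With these two clarifications the argument is complete and is essentially the one in \cite{BS04}.
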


In the above theorem, the $G$-equivariance of the map $\beta_c: B \to \cC$ is only meant almost everywhere: for every $g \in G$, for almost every $w \in B$, $\beta_c(g w) = g\beta_c(w)$. Since $\cC$ is a standard Borel space, it follows from \cite[Proposition B.5]{Zi84} that there exists a $G$-invariant conull measurable subset $X \subset B$ and a measurable map $\alpha : X \to \mathcal C$ such that $\alpha$ and $\beta_c$ coincide $\nu_B$-almost everywhere and $\alpha (g w) = g \alpha(w)$ for every $g \in G$ and every $w \in X$. In particular, when $B$ is a transitive $G$-space we must have $X = B$ and hence $\alpha$ is everywhere defined and $G$-equivariant. Note that there exists a unique such map which coincides almost everywhere with $\beta_c$.

From now on, assume that $G$ is a connected semisimple Lie group with finite center. Choose a maximal compact subgroup $K < G$ and a minimal parabolic subgroup so that $G = KP$. Denote by $\nu_P \in \Prob(G/P)$ the unique $K$-invariant Borel probability measure. Choose a left $K$-invariant admissible Borel probability measure $\mu \in \Prob(G)$. Since $\mu \ast \nu_P$ is $K$-invariant, it follows that $\mu \ast \nu_P = \nu_P$ and hence $\nu_P$ is $\mu$-stationary. Let $\mathcal C$ be any compact convex affine $G$-space. Denote by $\mathcal C_P \subset \mathcal C$ the compact convex subset of $P$-fixed points and by $\mathcal C_\mu \subset \mathcal C$ the compact convex subset of $\mu$-stationary points. Note that the set $\mathcal C_P$ is not empty since $P$ is amenable. For $b \in \mathcal C_P$, we define $\iota(b) = \int_{G/P} gb \, {\rm d}\nu_P(gP)$. In other words, $\iota(b)$ is the barycenter of the push-forward measure of $\nu_P$ under the continuous map $ G/P \to \mathcal C : gP \mapsto gb$.

\begin{thm}[{\cite[Lemma 2.1, Theorem 2.1]{Fu62b}}]\label{thm:representation}
The map $\iota : \mathcal C_P \to \mathcal C_\mu$ is a weak$^*$-continuous affine isomorphism.
\end{thm}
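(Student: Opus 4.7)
The plan is to construct an inverse $\rho : \mathcal C_\mu \to \mathcal C_P$ for $\iota$ by means of the Poisson boundary map of Theorem \ref{thm:poisson-map}, using that the Poisson boundary of $(G,\mu)$ for our left $K$-invariant admissible $\mu$ coincides with $(G/P, \nu_P)$.

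First I would verify the easy properties of $\iota$ on $\mathcal C_P$. Well-definedness of the integrand on $G/P$ (rather than on $G$) uses the $P$-fixedness of $b$, affinity of $\iota$ is inherited from that of the barycenter map (as the pushforward $(gP \mapsto gb)_\ast \nu_P$ depends affinely on $b$), and weak$^*$-continuity follows from dominated convergence in the defining integral. To see that $\iota(b) \in \mathcal C_\mu$, one applies Fubini to $\int_G f(g\iota(b))\dd\mu(g)$ for $f \in \Aff(\mathcal C)$ and uses $\mu \ast \nu_P = \nu_P$ on $G/P$ together with the $P$-invariance of $b$ (which makes $(g,hP) \mapsto ghb$ factor through $ghP$) to collapse the double integral back to $f(\iota(b))$, so that the criterion \eqref{stationary element} holds.

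Next, for $c \in \mathcal C_\mu$, Theorem \ref{thm:poisson-map} produces a $G$-equivariant measurable map $\beta_c : G/P \to \mathcal C$ with $\bary((\beta_c)_\ast \nu_P) = c$. Since $G/P$ is a transitive $G$-space, I pick the everywhere-defined $G$-equivariant representative of $\beta_c$ (as described immediately after Theorem \ref{thm:poisson-map}) and set $\rho(c) = \beta_c(eP) \in \mathcal C_P$. The identity $\iota \circ \rho = \id_{\mathcal C_\mu}$ is then immediate: using the $G$-equivariance of $\beta_c$,
\[\iota(\rho(c)) = \int_{G/P} g\beta_c(eP) \dd\nu_P(gP) = \int_{G/P} \beta_c(gP) \dd\nu_P(gP) = \bary((\beta_c)_\ast \nu_P) = c.\]
Conversely, for $b \in \mathcal C_P$ the orbit map $\beta'(gP) := gb$ is another everywhere-defined $G$-equivariant map $G/P \to \mathcal C$ with $\bary(\beta'_\ast \nu_P) = \iota(b)$, so the essential uniqueness in Theorem \ref{thm:poisson-map} forces $\beta'$ and $\beta_{\iota(b)}$ to agree $\nu_P$-almost everywhere. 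Any two everywhere-defined $G$-equivariant maps $G/P \to \mathcal C$ that agree at a single point $g_0 P$ must agree at $eP$ (cancel the bijection $g_0$ on $\mathcal C$) and hence everywhere, so $b = \beta'(eP) = \beta_{\iota(b)}(eP) = \rho(\iota(b))$.

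Finally, $\mathcal C_P$ and $\mathcal C_\mu$ are both weak$^*$-compact convex subsets of $\mathcal C$ (the defining conditions being manifestly weak$^*$-closed), so the continuous affine bijection $\iota$ between compact Hausdorff spaces is automatically a weak$^*$-homeomorphism, which delivers the claimed affine isomorphism. The main subtle point of the argument is the identity $\rho \circ \iota = \id$: the boundary map of Theorem \ref{thm:poisson-map} is only essentially unique, so one must exploit transitivity of $G/P$ to promote almost-everywhere equality of two $G$-equivariant maps to equality at the distinguished basepoint $eP$.
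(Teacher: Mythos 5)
Your proposal is correct and follows essentially the same route as the paper's (sketched) proof: identify $\mathcal C_\mu$ with the space of everywhere-defined $G$-equivariant maps $G/P \to \mathcal C$ via Theorem \ref{thm:poisson-map} and the transitivity of $G/P$, note that evaluation at $eP$ lands in $\mathcal C_P$, and check that $\iota$ is the inverse of this evaluation. You simply spell out the details that the paper defers to \cite{Fu62b}, and the verifications (stationarity of $\iota(b)$ via Fubini, the two composition identities, compactness) are all sound.
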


\begin{proof}
Strictly speaking, \cite[Theorem 2.1]{Fu62b} only covers the case where $\cC$ is of the form $\cC = \Prob(M)$ for some compact $G$-space $M$. One can check that the same proof applies in the general case. Alternatively, one can deduce the general case from Furstenberg's statement simply by using the maps $\delta: \rC \to \Prob(\cC) : c  \mapsto \delta_c$ and $\bary: \Prob(\cC) \to \cC : \nu \mapsto \bary(\nu)$.

The key fact in the proof is that the Poisson boundary of $(G,\mu)$ is $(G/P,\nu_P)$ (see \cite[Theorem 5.3]{Fu62a}). In this way, Theorem \ref{thm:poisson-map} and the paragraph following it imply that $\cC_\mu$ is affinely homeomorphic with the space of everywhere $G$-equivariant maps $G/P \to \cC$. Further, evaluation of such $G$-maps at the point $P \in G/P$ gives an affine homeomorphism with $\cC_P$. One checks that the formula for $\iota$ gives exactly the inverse of this affine homeomorphism $\cC_\mu \to \cC_P$.
\end{proof}

We infer the following useful result.

\begin{cor}\label{exmp:C*2}
Assume that $E = A$ is a separable unital $\rC^*$-algebra and $\sigma : G \curvearrowright A$ is a continuous action. Denote by $\mathcal C = \mathcal S(A)$ the state space of $A$ and consider the corresponding weak$^*$-continuous affine action $G \curvearrowright \mathcal S(A)$. Using Theorem \ref{thm:representation}, for every $\mu$-stationary state $\phi \in \mathcal S(A)$, there exists a unique $P$-invariant state $\psi \in \mathcal S(A)$ such that $$\phi = \int_{G/P} g\psi \, {\rm d}\nu_P(gP) = \int_{G/P} \psi \circ \sigma_g^{-1} \, {\rm d}\nu_P(gP).$$
\end{cor}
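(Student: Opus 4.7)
The plan is to apply Theorem \ref{thm:representation} directly to the compact convex affine $G$-space $\mathcal{C} = \mathcal{S}(A) \subset A^*$. Since each $\sigma_g$ is a $*$-automorphism of $A$, it is an isometry, so $A$ is a separable continuous isometric Banach $G$-module. The state space $\mathcal{S}(A)$ is weak$^*$-compact, convex, and $G$-invariant under the push-forward action, so it qualifies as a compact convex affine $G$-space in the sense of Subsection \ref{subsection:poisson}.

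Next, I would invoke Example \ref{exmp:C*1} to identify the two a priori different notions of $\mu$-stationarity: a state $\phi \in \mathcal{S}(A)$ is $\mu$-stationary in the barycentric sense of Definition \ref{defn:stationary} if and only if it satisfies $\phi(a) = \int_G \phi(\sigma_g^{-1}(a))\,\d\mu(g)$ for every $a \in A$. Hence $\mathcal{C}_\mu \subset \mathcal{S}(A)$ is precisely the set of $\mu$-stationary states, and $\mathcal{C}_P$ is the set of $P$-invariant states.

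By Theorem \ref{thm:representation}, the map $\iota : \mathcal{C}_P \to \mathcal{C}_\mu$, $\iota(\psi) = \int_{G/P} g\psi\,\d\nu_P(gP)$, is a weak$^*$-continuous affine bijection, where the integral is interpreted as the barycenter in $\mathcal{S}(A)$ of the push-forward of $\nu_P$ under the continuous orbit map $gP \mapsto g\psi$ (well-defined since $g \mapsto \sigma_g^{-1}(a)$ is norm-continuous for every $a \in A$, so $g \mapsto (g\psi)(a)$ is continuous and $P$-invariant). For each $a \in A$, the evaluation $\mathrm{ev}_a : \phi \mapsto \phi(a)$ is a weak$^*$-continuous affine functional on $\mathcal{S}(A)$, so the defining property of the barycenter gives
\[
\iota(\psi)(a) = \int_{G/P} (g\psi)(a)\,\d\nu_P(gP) = \int_{G/P} \psi(\sigma_g^{-1}(a))\,\d\nu_P(gP).
\]
Bijectivity of $\iota$ then yields, for every $\mu$-stationary $\phi \in \mathcal{S}(A)$, a unique $P$-invariant $\psi \in \mathcal{S}(A)$ with $\phi = \iota(\psi)$, which is exactly the stated formula.

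There is no substantive obstacle here; the corollary is a direct unpacking of Theorem \ref{thm:representation} in the special case $\mathcal{C} = \mathcal{S}(A)$, together with the compatibility of the two stationarity conventions provided by Example \ref{exmp:C*1}. The only verifications to make explicit are that the dual action on $A^*$ restricts to a weak$^*$-continuous affine action on $\mathcal{S}(A)$ (which is standard) and that barycenters in $\mathcal{S}(A)$ are characterized by testing against evaluation functionals $\mathrm{ev}_a$ (which suffices because the linear span of the $\mathrm{ev}_a$ is weak$^*$-dense in $\mathrm{Aff}(\mathcal{S}(A))$ in the relevant sense).
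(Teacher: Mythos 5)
Your proposal is correct and follows exactly the route the paper intends: the corollary is stated as a direct consequence of Theorem \ref{thm:representation} applied to $\mathcal C = \mathcal S(A)$, with Example \ref{exmp:C*1} reconciling the two notions of stationarity, and the paper gives no further argument beyond this. Your unpacking of the barycenter via the evaluation functionals $\mathrm{ev}_a$ (which already separate points of $A^*$, so no density argument is even needed) is a faithful and complete rendering of that reasoning.
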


\section{Absolute continuity for states on C*-algebras}
\label{Section AC}

 In this section, we investigate the noncommutative analogue of the notion of absolute continuity of Borel probability measures on a compact metrizable space.

 We fix a separable unital $\rC^*$-algebra $A$. For every $\phi \in \cS(A)$, we denote by $(\pi_\phi,H_\phi,\xi_\phi)$ the corresponding GNS triple. In other words, $H_\phi$ is a Hilbert space, $\pi_\phi: A \to \mathbf B(H_\phi)$ is a C*-morphism, and $\xi_\phi$ is a unit vector in $H_\phi$ such that $\pi_\phi(A)\xi_\phi$ is dense in $H_\phi$ and $\langle \pi_\phi(a)\xi_\phi,\xi_\phi\rangle = \phi(a)$, for every $a \in A$.
We also denote by $\phi$ the normal state $\langle \, \cdot\, \xi_\phi, \xi_\phi\rangle$ on $\pi_\phi(A)\dpr$. By construction, we have $\phi(\pi_\phi(a)) = \phi(a)$ for every $a \in A$.

\begin{defn}
Let $\phi,\psi \in \cS(A)$ be any states. We say that $\psi$ is {\em absolutely continuous} with respect to $\phi$ and write $\psi \prec \phi$, if for every $a \in A$, we have $\|\pi_\psi(a)\| \leq \|\pi_\phi(a)\|$ and the well-defined unital $\ast$-homomorphism $\pi_\phi(A) \to \pi_\psi(A)\dpr : \pi_\phi(a) \mapsto \pi_\psi(a)$ has a normal extension to $\pi_\phi(A)\dpr$, denoted by $\pi_{\phi,\psi}$.
\end{defn}

We say that $\phi$ and $\psi$ are equivalent and write $\phi \sim \psi$ if $\phi \prec \psi$ and $\phi \prec \psi$. In this case, $\pi_{\phi, \psi} : \pi_\phi(A)\dpr \to \pi_\psi(A)\dpr$ is an onto von Neumann algebra isomorphism.

Following \cite[Section 1.4]{BO08}, every C*-representation $\pi : A \to \mathbf B(H_\pi)$ admits a normal extension $\tpi: A^{\ast \ast} \to \pi(A)\dpr$. Here the bidual space $A^{**}$ is viewed as the universal enveloping von Neumann algebra of $A$. Recall that the image of a von Neumann algebra under a normal $\ast$-homomorphism is again a von Neumann algebra. So $\tpi: A^{\ast \ast} \to \pi(A)\dpr$ is necessarily onto.

We denote by $z_\pi$ the support projection of $\tpi$, i.e.\ the smallest projection in $A^{**}$ such that $\tpi(z_\pi) = 1$. In particular $z_\pi$ belongs to $\cZ(A^{**})$ and $\tpi$ restricts to an onto von Neumann isomorphism $A^{\ast \ast}z_\pi \to \pi(A)\dpr : a z_\pi \mapsto \tpi(a)$. For every $\phi \in \mathcal S(A)$, we simply write $z_\phi = z_{\pi_\phi}$. We note the following characterization.

\begin{lem}\label{lem:abs-cont}
Let $\phi,\psi \in \cS(A)$ be any states. The following conditions are equivalent:
\begin{itemize}
\item [$(\rm i)$] $\psi \prec \phi$.
\item [$(\rm ii)$] For every $a \in A$, we have $\|\pi_\psi(a)\| \leq \|\pi_\phi(a)\|$ and the well-defined state $\pi_\phi(A) \to \C : \pi_\phi(a) \mapsto \psi(a)$ has a normal extension to $\pi_\phi(A)\dpr$.
\item [$(\rm iii)$] $z_\psi \leq z_\phi$.
\end{itemize}
\end{lem}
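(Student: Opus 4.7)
The plan is to establish (i) $\Leftrightarrow$ (iii) directly via the universal property of the enveloping von Neumann algebra $A^{**}$, and to handle (i) $\Leftrightarrow$ (ii) separately by means of the GNS construction applied to the normal extension.

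For (i) $\Rightarrow$ (iii), I would argue that the two normal unital $\ast$-homomorphisms $\pi_{\phi,\psi} \circ \tpi_\phi$ and $\tpi_\psi$ from $A^{**}$ to $\pi_\psi(A)\dpr$ coincide on $A$, hence on all of $A^{**}$ by the universal property of $A^{**}$. Evaluating this identity at $z_\phi$ gives $\tpi_\psi(z_\phi) = \pi_{\phi,\psi}(1) = 1$, and the characterisation of $z_\psi$ as the smallest projection $p$ with $\tpi_\psi(p) = 1$ yields $z_\psi \leq z_\phi$. Conversely, from $z_\psi \leq z_\phi$ one has $\ker \tpi_\phi = A^{**}(1 - z_\phi) \subset A^{**}(1-z_\psi) = \ker \tpi_\psi$, so $\tpi_\psi$ factors through $\tpi_\phi$ as $\tpi_\psi = \pi_{\phi,\psi} \circ \tpi_\phi$ for some normal unital $\ast$-homomorphism $\pi_{\phi,\psi} : \pi_\phi(A)\dpr \to \pi_\psi(A)\dpr$ satisfying $\pi_{\phi,\psi}(\pi_\phi(a)) = \pi_\psi(a)$ for every $a \in A$; contractivity then gives $\|\pi_\psi(a)\| \leq \|\pi_\phi(a)\|$, which is (i).

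The implication (i) $\Rightarrow$ (ii) is immediate: the formula $\omega(x) = \langle \pi_{\phi,\psi}(x)\xi_\psi, \xi_\psi\rangle$ defines a normal state on $\pi_\phi(A)\dpr$ whose restriction to $\pi_\phi(A)$ is $\pi_\phi(a) \mapsto \psi(a)$. For (ii) $\Rightarrow$ (i), I would apply the GNS construction to the given normal state $\omega$ on $\pi_\phi(A)\dpr$, producing a normal representation $\pi_\omega$ of $\pi_\phi(A)\dpr$ on a Hilbert space $H_\omega$ with cyclic vector $\xi_\omega$ implementing $\omega$. The composition $\pi_\omega \circ \pi_\phi$ is then a $\ast$-representation of $A$ for which $\xi_\omega$ is cyclic---since by normality of $\pi_\omega$ the strong closure of $\pi_\omega(\pi_\phi(A))$ contains all of $\pi_\omega(\pi_\phi(A)\dpr)$---and which realises the state $\psi$. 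Uniqueness of the GNS triple provides a unitary $U : H_\psi \to H_\omega$ intertwining $\pi_\psi$ with $\pi_\omega \circ \pi_\phi$, and the formula $\pi_{\phi,\psi}(x) := U^\ast \pi_\omega(x) U$ yields the normal unital $\ast$-homomorphism required in (i).

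The main conceptual point I expect to require care with is the range issue in the (ii) $\Rightarrow$ (i) step: one must exploit normality of $\pi_\omega$ to obtain $\pi_\omega(\pi_\phi(A)\dpr) = \pi_\omega(\pi_\phi(A))\dpr$, so that conjugation by $U$ sends this image precisely onto $\pi_\psi(A)\dpr$ rather than into some strictly larger algebra. Beyond this, the argument is a routine application of the universal property of $A^{**}$, together with the fact that the support projection $z_\pi$ is simultaneously central in $A^{**}$ and minimal among projections $p$ with $\tpi(p) = 1$.
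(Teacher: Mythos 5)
Your proof is correct, but it closes the equivalences along a different route than the paper. The paper proves the single cycle $(\rm i) \Rightarrow (\rm ii) \Rightarrow (\rm iii) \Rightarrow (\rm i)$; its only nontrivial steps are $(\rm iii) \Rightarrow (\rm i)$, done exactly as in your $(\rm iii) \Rightarrow (\rm i)$ via the chain $\pi_\phi(A)\dpr \cong A^{**}z_\phi \to A^{**}z_\psi \cong \pi_\psi(A)\dpr$, and $(\rm ii) \Rightarrow (\rm iii)$, which you avoid entirely. There the paper observes that the normal state $\rho_{\phi,\psi}$ of $(\rm ii)$ satisfies $\rho_{\phi,\psi}\circ\tpi_\phi = \psi\circ\tpi_\psi$ on $A^{**}$, evaluates at $z_\phi$ to get $\tpi_\psi(z_\phi)\xi_\psi = \xi_\psi$, and then uses centrality of $\tpi_\psi(z_\phi)$ together with cyclicity of $\xi_\psi$ to conclude $\tpi_\psi(z_\phi)=1$, i.e.\ $z_\psi \leq z_\phi$. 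You instead prove $(\rm ii) \Rightarrow (\rm i)$ directly by performing the GNS construction for the normal state $\omega$ on $\pi_\phi(A)\dpr$ and invoking uniqueness of the GNS triple for $\psi$; the point you rightly flag --- that $\xi_\omega$ stays cyclic for $\pi_\omega(\pi_\phi(A))$ and that $\pi_\omega(\pi_\phi(A)\dpr) = \pi_\omega(\pi_\phi(A))\dpr$, both consequences of normality of $\pi_\omega$ plus Kaplansky density --- is exactly what makes this work. Your version constructs $\pi_{\phi,\psi}$ explicitly without passing through the support projections, at the cost of a slightly longer argument; the paper's version is shorter because the state-level identity $\rho_{\phi,\psi}\circ\tpi_\phi = \psi\circ\tpi_\psi$ does all the work in one line. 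Both are complete proofs.
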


\begin{proof}
$(\rm i) \Rightarrow (\rm ii)$ is obvious. 

$(\rm iii) \Rightarrow (\rm i)$. We obtain the desired normal unital $\ast$-homomorphism as the following composition:
\[\begin{matrix}
\pi_\phi(A)'' & \to  &  A^{**}z_\phi & \to & A^{**}z_\psi & \to & \pi_\psi(A)''\\
\tpi_\phi(a) & \mapsto & az_\phi & \mapsto & az_\psi & \mapsto & \tpi_\psi(a)
\end{matrix} \qquad (a \in A^{**}).\]

$(\rm ii) \Rightarrow (\rm iii)$. Denote by $\rho_{\phi, \psi} : \pi_\phi(A)\dpr \to \C : \pi_\phi(a) \mapsto \psi(a)$ the corresponding normal state. By definition, we have $\rho_{\phi, \psi} \circ \widetilde \pi_\phi = \psi \circ \widetilde \pi_\psi$ on $A^{\ast \ast}$ This implies that $\psi(\widetilde \pi_\psi(z_\phi)) = 1$ and so $\widetilde \pi_\psi(z_\phi) \xi_\psi = \xi_\psi$. For every $a, b \in A$, we have
\begin{align*}
\langle \widetilde \pi_\psi(z_\phi) \, \pi_\psi(a) \xi_\psi, \pi_\psi(b) \xi_\psi\rangle &=  \langle \pi_\psi(a) \, \widetilde \pi_\psi(z_\phi)  \xi_\psi, \pi_\psi(b) \xi_\psi\rangle \\
&=  \langle \pi_\psi(a)  \xi_\psi, \pi_\psi(b) \xi_\psi\rangle
\end{align*}
and so $\widetilde \pi_\psi(z_\phi) = 1$. By construction, this further implies that $z_\psi \leq z_\phi$.
\end{proof}

The following proposition will play a central role in our analysis.

\begin{prop}\label{ac crit}
Let $\nu_1,\nu_2 \in \Prob(\cS(A))$ be any Borel probability measures. Assume that $\nu_2$ is absolutely continuous with respect to $\nu_1$. Then $\phi_2 = \bary(\nu_2)$ is absolutely continuous with respect to $\phi_1 = \bary(\nu_1)$.

Assume moreover that $\nu_2$ and $\nu_1$ are equivalent, so that $\phi_2$ and $\phi_1$ are equivalent. Then $\phi_1$ is faithful on $\pi_{\phi_1}(A)\dpr$ if and only if $\phi_2$ is faithful $\pi_{\phi_2}(A)\dpr$.
\end{prop}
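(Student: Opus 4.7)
My plan is to verify absolute continuity through the support projection criterion of Lemma~\ref{lem:abs-cont}$(\rm iii)$, i.e.\ the inequality $z_{\phi_2}\leq z_{\phi_1}$ in $A^{**}$. The key technical input is the integral identity
\[
\widetilde\phi(x)=\int_{\cS(A)}\widetilde\psi(x)\dd\nu(\psi),\qquad x\in A^{**},
\]
valid for any $\nu\in\Prob(\cS(A))$ with $\phi=\bary(\nu)$, where $\widetilde\rho$ denotes the unique normal extension to $A^{**}$ of a state $\rho$ on $A$. I would establish it by first checking Borel measurability of $\psi\mapsto\widetilde\psi(x)$ for every $x\in A^{**}$ via a monotone class argument starting from the weak$^*$-continuous functions $\psi\mapsto\psi(a)$, $a\in A$; the right-hand side then defines a positive functional on $A^{**}$ extending $\phi$ from $A$, and one checks it is normal using the fact that $A$ separable makes the predual of $A^{**}$ separable, so that monotone sequential continuity suffices. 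Uniqueness of normal extensions then forces the integrated expression to coincide with $\widetilde\phi$.

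Given the formula, the first assertion is immediate. Applied to the central projection $z_{\phi_1}\in A^{**}$, the equality $\widetilde\phi_1(z_{\phi_1})=1$ combined with $0\leq\widetilde\psi(z_{\phi_1})\leq 1$ forces $\widetilde\psi(z_{\phi_1})=1$ for $\nu_1$-almost every $\psi$, hence for $\nu_2$-almost every $\psi$ by absolute continuity, and therefore $\widetilde\phi_2(z_{\phi_1})=1$. Exactly as in the proof of Lemma~\ref{lem:abs-cont}$(\rm ii)\Rightarrow(\rm iii)$, since $z_{\phi_1}$ is central and $\xi_{\phi_2}$ is cyclic for $\pi_{\phi_2}(A)\dpr$, this upgrades to $\widetilde\pi_{\phi_2}(z_{\phi_1})=1$, so the minimality of $z_{\phi_2}$ forces $z_{\phi_2}\leq z_{\phi_1}$, i.e.\ $\phi_2\prec\phi_1$.

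For the moreover part, assume $\nu_1\sim\nu_2$. Applying the first part in both directions yields $z_{\phi_1}=z_{\phi_2}$, hence $\phi_1\sim\phi_2$. I would rely on the auxiliary characterisation that $\phi_i$ is faithful on $\pi_{\phi_i}(A)\dpr$ if and only if the support $s_i\in A^{**}$ of the normal state $\widetilde\phi_i$ (a projection below $z_{\phi_i}$, not necessarily central) coincides with $z_{\phi_i}$; this follows from the fact that $\widetilde\pi_{\phi_i}\colon z_{\phi_i}A^{**}\to\pi_{\phi_i}(A)\dpr$ is a $*$-isomorphism intertwining $\widetilde\phi_i|_{z_{\phi_i}A^{**}}$ with $\phi_i$, and thus transports $s_i$ onto the support of $\phi_i$ on $\pi_{\phi_i}(A)\dpr$. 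Running the argument of the previous paragraph with $z_{\phi_1}$ replaced by $s_1$ gives $\widetilde\phi_2(s_1)=1$, hence $s_2\leq s_1$; by symmetry $s_1=s_2$. Combined with $z_{\phi_1}=z_{\phi_2}$, this forces $s_1=z_{\phi_1}\Leftrightarrow s_2=z_{\phi_2}$, which is the desired equivalence of faithfulness.

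The main obstacle is really the integral identity of the first paragraph, and more precisely the passage from $A$ to $A^{**}$: one has to justify the Borel measurability of $\psi\mapsto\widetilde\psi(x)$ for general $x\in A^{**}$ and check that the integrated expression defines a normal---not merely positive---functional on $A^{**}$. Once this technical measure-theoretic point is settled, everything reduces to bookkeeping with support projections and applications of Lemma~\ref{lem:abs-cont}.
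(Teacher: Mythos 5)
The key identity $\widetilde\phi(x)=\int_{\cS(A)}\widetilde\psi(x)\,\dd\nu(\psi)$ for all $x\in A^{**}$ is false, and this breaks the whole argument. The right-hand side does define a state on $A^{**}$ extending $\phi$, but it is in general \emph{not} normal, so uniqueness of normal extensions does not apply: an uncountable integral of normal states need not be normal (unlike a countable convex combination), and the predual of $A^{**}$ is $A^*$, which is not even separable for $A=\rC([0,1])$, so your proposed normality check cannot work. Concretely, take $A=\rC([0,1])$ and let $\nu\in\Prob(\cS(A))$ be the push-forward of Lebesgue measure under $t\mapsto\delta_t$, so that $\phi=\bary(\nu)$ is Lebesgue measure $\lambda$ and $\pi_\phi(A)\dpr=\rL^\infty([0,1],\lambda)$. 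For the central projection $x=z_\phi$ one has $\widetilde\phi(z_\phi)=1$, whereas $\widetilde{\delta_t}(z_\phi)=0$ for \emph{every} $t$, because $\pi_{\delta_t}$ is disjoint from $\pi_\lambda$ (the measures are mutually singular), so $z_{\delta_t}z_\lambda=0$. Hence $\int_{\cS(A)}\widetilde\psi(z_\phi)\,\dd\nu(\psi)=0\neq1=\widetilde\phi(z_\phi)$. This is precisely the element to which you want to apply the identity, so the very first step of your argument (deducing $\widetilde\psi(z_{\phi_1})=1$ for $\nu_1$-almost every $\psi$) fails; the same problem invalidates the faithfulness part, which uses the identity at the support projection $s_1$.

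The phenomenon your argument misses is that $z_{\bary(\nu)}$ is not controlled pointwise by the projections $z_\psi$: in the example above $\bigvee_t z_{\delta_t}$ is actually orthogonal to $z_\lambda$ even though $\lambda=\bary(\nu)$. The correct way to relate $\phi=\bary(\nu)$ to the field $(\pi_\psi)_{\psi}$ is through the direct integral representation $\pi_\nu=\int^{\oplus}_{\cS(A)}\pi_\psi\,\dd\nu(\psi)$, for which one \emph{can} prove $z_{\pi_\nu}=z_\phi$ (this is Lemma \ref{GNS identification}). The paper then realizes $\phi_2$ as the vector state of $\xi_2=\int^{\oplus}f(\psi)^{1/2}\xi_\psi\,\dd\nu_1(\psi)$ in $H_{\nu_1}$, where $f=\dd\nu_2/\dd\nu_1$, which exhibits the required normal extension of $\pi_{\phi_1}(a)\mapsto\phi_2(a)$ directly, and proves the faithfulness statement by checking that $\xi_2$ is separating for $\pi_{\nu_1}(A)\dpr$ when $f>0$ almost everywhere. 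If you want to keep your support-projection bookkeeping, it must be routed through $\pi_{\nu_1}$ rather than through pointwise evaluation of elements of $A^{**}$ against the states $\psi$.
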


Before proving the proposition, let us discuss GNS representations associated with states expressed as barycenters of measures. We refer to \cite[Sections IV.6 and IV.8]{Ta02} for more on this. Let $\nu \in \Prob(\cS(A))$ be any Borel probability measure and denote by $\phi = \bary(\nu) \in \mathcal S(A)$.  We introduce the direct integral unitary representation $(\pi_\nu,H_\nu, \xi_\nu)$ of $A$ associated with the measurable field $\{\pi_\psi \mid \psi \in \mathcal S(A)\}$:
\[H_\nu = \int_{\mathcal S(A)}^\oplus H_\psi \, {\rm d}\nu(\psi), \quad \pi_\nu = \int_{\mathcal S(A)}^\oplus \pi_\psi \, {\rm d}\nu(\psi), \quad \xi_\nu = \int_{\mathcal S(A)}^\oplus \xi_\psi \, {\rm d}\nu(\psi).\]
Observe that for all $a,b \in A$, the map $\psi \mapsto \langle a\xi_\psi,b\xi_\psi\rangle = \psi(b^*a)$ is continuous hence measurable on $\mathcal S(A)$, and the above direct integrals all make sense.
Observe that $\rL^\infty(\mathcal S(A),\nu)$ acts diagonally on $H_\nu$. We need the following useful result.

\begin{lem}[{\cite[Lemma 4.1]{AB18}}]\label{GNS identification}
The map $\pi_\nu(A)\dpr \to \pi_\phi(A)\dpr : \pi_\nu(a) \mapsto \pi_\phi(a)$ extends to a well-defined surjective normal unital $\ast$-isomorphism. In other words, we have $z_{\pi_\nu} = z_\phi$.
\end{lem}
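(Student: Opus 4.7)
The plan is to prove the equivalent characterization $z_{\pi_\nu} = z_\phi$ of central support projections in $A^{**}$, since once this equality is established, the desired normal isomorphism $\pi_\nu(A)\dpr \to \pi_\phi(A)\dpr$ arises as the composition of the canonical identifications $\pi_\nu(A)\dpr \cong A^{**}z_{\pi_\nu} = A^{**}z_\phi \cong \pi_\phi(A)\dpr$ (recall that $\widetilde{\pi} : A^{**}z_\pi \to \pi(A)\dpr$ is always a normal isomorphism), and this composition manifestly sends $\pi_\nu(a) \mapsto \pi_\phi(a)$ for every $a \in A \subset A^{**}$.

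For the inclusion $z_\phi \leq z_{\pi_\nu}$, I would exhibit $\pi_\phi$ as a subrepresentation of $\pi_\nu$. By the very construction of the direct integral, for every $a \in A$ we have
\[ \langle \pi_\nu(a) \xi_\nu, \xi_\nu\rangle = \int_{\mathcal S(A)} \langle \pi_\psi(a)\xi_\psi, \xi_\psi\rangle \, \dd\nu(\psi) = \int_{\mathcal S(A)} \psi(a) \, \dd\nu(\psi) = \phi(a), \]
so $\phi$ is realized as the vector state of $\xi_\nu$ under $\pi_\nu$. Restricting $\pi_\nu$ to the invariant cyclic subspace $\overline{\pi_\nu(A)\xi_\nu} \subset H_\nu$ yields a cyclic representation with vector state $\phi$, which by uniqueness of the GNS construction is unitarily equivalent to $(\pi_\phi, H_\phi, \xi_\phi)$. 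Consequently $\ker \widetilde{\pi}_\nu \subseteq \ker \widetilde{\pi}_\phi$, and taking complements of central supports gives $z_\phi \leq z_{\pi_\nu}$.

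The reverse inclusion $z_{\pi_\nu} \leq z_\phi$ is the main content and crucially uses that $z_\phi$ lies in $\mathcal{Z}(A^{**})$. Setting $p = 1 - z_\phi \in \mathcal Z(A^{**})$, one first notes that $\phi$ extends normally to $A^{**}$ via $\phi = \langle \widetilde{\pi}_\phi(\,\cdot\,)\xi_\phi, \xi_\phi\rangle$, and that the barycenter identity $\phi = \int \psi \, \dd\nu(\psi)$ persists on $A^{**}$ in the sense that $\phi(p) = \int_{\mathcal S(A)} \psi(p)\, \dd\nu(\psi)$ (by ultraweak continuity, applied to a bounded net in $A$ converging to $p$). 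Since $\phi(p) = 0$ and each $\psi(p) \geq 0$, positivity forces $\psi(p) = 0$ for $\nu$-almost every $\psi$. For such $\psi$, the computation $\|\widetilde{\pi}_\psi(p)\xi_\psi\|^2 = \psi(p^2) = \psi(p) = 0$, combined with $p \in \mathcal Z(A^{**})$ (so that $\widetilde{\pi}_\psi(p)$ commutes with $\pi_\psi(A)$) and the cyclicity of $\xi_\psi$, yields $\widetilde{\pi}_\psi(p) = 0$. A direct integral argument then gives $\widetilde{\pi}_\nu(p) = \int^\oplus \widetilde{\pi}_\psi(p)\, \dd\nu(\psi) = 0$, i.e.\ $z_{\pi_\nu} \leq z_\phi$.

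The main obstacle, and the reason the centrality of $z_\phi$ is essential, is the passage from $\psi(p) = 0$ to $\widetilde{\pi}_\psi(p) = 0$: without centrality one would only conclude $\widetilde{\pi}_\psi(p)\xi_\psi = 0$, which is strictly weaker. A secondary technical point is justifying the direct integral identity $\widetilde{\pi}_\nu(p) = \int^\oplus \widetilde{\pi}_\psi(p) \, \dd\nu(\psi)$ for general $p \in A^{**}$; this is handled by extending the obvious equality $\pi_\nu(a) = \int^\oplus \pi_\psi(a)\, \dd\nu(\psi)$ for $a \in A$ by ultraweak continuity, using that ultraweak convergence on $B(H_\nu)$ tested against vectors of the form $\int^\oplus \xi_\psi \, \dd\nu(\psi)$ integrates fiberwise.
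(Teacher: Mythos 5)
Your first inclusion $z_\phi \leq z_{\pi_\nu}$ is correct and coincides with the first half of the paper's argument (realize $\phi$ as the vector state of $\xi_\nu$ and invoke uniqueness of GNS). The second inclusion, however, rests on a false identity: the barycenter formula $\phi(x) = \int_{\cS(A)} \psi(x) \, \dd\nu(\psi)$ does \emph{not} persist for $x \in A^{\ast\ast}$, and it fails precisely for $x = 1 - z_\phi$. Concretely, take $A = \rC([0,1])$, let $\nu$ be the push-forward of Lebesgue measure under the continuous map $t \mapsto \delta_t$, so that $\phi = \bary(\nu)$ is integration against Lebesgue measure and $\pi_\phi(A)\dpr = \rL^\infty([0,1])$. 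Then $\delta_t \not\prec \phi$ for every $t$ (evaluation at a point has no normal extension to $\rL^\infty([0,1])$), so $z_{\delta_t} z_\phi = 0$ by minimality of $z_{\delta_t}$ in $\cZ(A^{\ast\ast})$, whence $\delta_t(1 - z_\phi) = 1$ for \emph{every} $t$ while $\phi(1 - z_\phi) = 0$. Thus $\phi(p) = 0$ gives no control whatsoever on the fiber values $\psi(p)$. The justification you offer breaks down because the approximating net in $A$ converging ultraweakly to $p$ is genuinely a net, so dominated convergence is unavailable, and pointwise convergence of $\psi(a_i)$ on $\cS(A)$ cannot be passed through the integral; the example shows the conclusion is actually false, not merely unproven. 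The same example refutes your secondary identity $\tpi_\nu(p) = \int^\oplus \tpi_\psi(p) \, \dd\nu(\psi)$ for $p \in A^{\ast\ast} \setminus A$: there $\tpi_{\delta_t}(1 - z_\phi) = 1$ for all $t$, yet $\tpi_\nu(1 - z_\phi) = 0$. The underlying phenomenon is that $z_{\pi_\nu}$ is in general much smaller than $\bigvee_\psi z_\psi$, so no fiberwise argument on $A^{\ast\ast}$ of this kind can work.

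The paper proves the hard direction without leaving $\mathbf B(H_\nu)$. Let $p \in \pi_\nu(A)'$ be the orthogonal projection onto $\overline{\pi_\nu(A)\xi_\nu}$; compression $x \mapsto xp$ is then automatically a normal unital $\ast$-homomorphism of $\pi_\nu(A)\dpr$ onto $\pi_\nu(A)\dpr p \cong \pi_\phi(A)\dpr$, and the entire content is its injectivity. This is obtained by disintegration: any $x \in \pi_\nu(A)\dpr$ commutes with the diagonal copy of $\rL^\infty(\cS(A),\nu)$ and hence decomposes as $x = \int^\oplus x_\psi \, \dd\nu(\psi)$ with $x_\psi \in \pi_\psi(A)\dpr$; if $xp = 0$ then $x_\psi \pi_\psi(a)\xi_\psi = 0$ for $\nu$-almost every $\psi$ and all $a$ (separability of $A$ is what allows the exchange of quantifiers), and cyclicity of $\xi_\psi$ forces $x_\psi = 0$. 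You would need to replace the second half of your argument by something of this kind.
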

\begin{proof}
Set $M = \pi_\nu(A)\dpr$ and observe that $M \subset \int_{\mathcal S(A)}^\oplus \pi_\psi (A)\dpr\, {\rm d}\nu(\psi)$. Denote by $p \in M' \cap \mathbf B(H_\nu)$ the orthogonal projection onto the closed subspace $K_\nu = \overline{\pi_\nu(A)\xi_\nu}$. Observe that $\xi_\nu$ is a $\pi_\nu(A)$-cyclic vector in $K_\nu$ that implements the state $\phi$ on $A$. So by uniqueness of the GNS representation, the representation $A  \to \mathbf B(\cK_\nu) : a \mapsto \pi_\nu(a) p$ is unitarily conjugate to $\pi_\phi$. In particular, it indeed extends to a surjective normal $\ast$-isomorphism $\pi_\phi(A)\dpr \to M p : \pi_\phi(a) \mapsto \pi_\nu(a) p$.

We are left to check that the normal unital $\ast$-homomorphism $ M \to Mp :x \mapsto xp$ is injective. Let $x \in M$ such that $x p = 0$. Since $x \in M$, $x$ commutes with $\rL^\infty(\mathcal S(A),\nu)$, and so $x \in \int_{\mathcal S(A)}^\oplus \pi_\psi(A)\dpr \, {\rm d}\nu(\psi)$ may be disintegrated $x = \int_{\mathcal S(A)}^\oplus x_\psi \, {\rm d}\nu(\psi)$ where $x_\psi \in \pi_\psi(A)\dpr$ for $\nu$-almost every $\psi \in \mathcal S(A)$. If $xp = 0$, we derive that $\int_{\mathcal S(A)}^\oplus x_\psi \pi_\psi(a) \xi_\psi \, {\rm d}\nu(\psi) = 0$, for all $a \in A$. Hence for every $a \in A$ and $\nu$-almost every $\psi \in \mathcal S(A)$, we have $x_\psi \pi_\psi(a)\xi_\psi = 0$. Since $A$ is separable, for $\nu$-almost every $\psi \in \mathcal S(A)$ and every $a \in A$, we have $x_\psi \pi_\psi(a)\xi_\psi = 0$. Since each $\xi_\psi$ is $\pi_\psi(A)$-cyclic, we conclude that $x_\psi = 0$ for $\nu$-almost every $\psi \in \mathcal S(A)$, i.e.\ $x = 0$.
\end{proof}

\begin{proof}[Proof of Proposition \ref{ac crit}]
Set $X = \mathcal S(A)$. Denote by $f = \frac{{\rm d}\nu_2}{{\rm d}\nu_1} \in \rL^1(X, \nu_1)$ the Radon--Nikodym derivative of $\nu_2$ with respect to $\nu_1$. Observe that the vector $\xi_2 = \int_X^\oplus f(\psi)^{1/2}\xi_\psi \, {\rm d}\nu_1(\psi) \in H_{\nu_1}$ is a unit vector such that
\[\forall a \in A, \quad \langle \pi_{\nu_1}(a)\xi_2,\xi_2\rangle = \phi_2(a).\]
Thus, the state $\pi_{\nu_1}(A) \to \C : \pi_{\nu_1}(a) \mapsto \phi_2(a)$ has a normal extension to $\pi_{\nu_1}(A)\dpr$. The first part of the proposition now follows from Lemma \ref{GNS identification}.

For the second part of the proposition, assume that $\nu_2$ and $\nu_1$ are equivalent and that $\phi_1$ is faithful on $\pi_{\phi_1}(A)\dpr$. Then the map $\pi_{\phi_1, \phi_2} : \pi_{\phi_1}(A)\dpr \to \pi_{\phi_2}(A)\dpr : \pi_{\phi_1}(a) \mapsto \pi_{\phi_2}(a)$ is a surjective normal unital $\ast$-isomorphism. We show that $\phi_2 \circ \pi_{\phi_1, \phi_2}$ is faithful on $\pi_{\phi_1}(A)\dpr$. This will imply that $\phi_2$ is faithful on $\pi_{\phi_2}(A)\dpr$. Observe that 
\begin{equation}\label{eq:state}
\forall a \in A, \quad (\phi_2 \circ \pi_{\phi_1, \phi_2})(\pi_{\phi_1}(a)) = \phi_2(a) = \langle \pi_{\nu_1}(a)\xi_2,\xi_2\rangle.
\end{equation}
Set $M = \pi_{\nu_1}(A)\dpr$. Since $\phi_1$ is faithful on $\pi_{\phi_1}(A)\dpr$, Lemma \ref{GNS identification} and its proof imply that the unit vector $\xi_{1} = \int_X^\oplus \xi_\psi \, {\rm d}\nu_1(\psi)  \in H_{\nu_1}$ is $M$-separating, meaning that for every $x \in M$, if $x \xi_{1} = 0$ then $x = 0$. In order to show that $\phi_2 \circ \pi_{\phi_1, \phi_2}$ is still faithful on $\pi_{\phi_1}(A)\dpr$, using Lemma \ref{GNS identification} and \eqref{eq:state}, it suffices to show that the unit vector $\xi_2 \in H_{\nu_1}$ is $M$-separating. Let $x \in M$ be such that $x\xi_2 = 0$. Since $x \in M$, $x$ commutes with $\rL^\infty(X,\nu_1)$, and so $x$ may be written $x = \int_{X}^\oplus x_\psi \, {\rm d}\nu_1(\psi)$ where $x_\psi \in \pi_\psi(A)\dpr$ for $\nu_1$-almost every $\psi \in X$. Since $x\xi_2 = 0$, we have $f(\psi)^{1/2} x_\psi \xi_\psi = 0$ for $\nu_1$-almost every $\psi \in X$. Since $\nu_2$ and $\nu_1$ are equivalent, we have $f(\psi) > 0$ for $\nu_1$-almost every $\psi \in X$ and so $ x_\psi \xi_\psi = 0$ for $\nu_1$-almost every $\psi \in X$. This implies that $x\xi_1 = 0$ and so $x = 0$.
\end{proof}

\section{Induced actions and stationary states}\label{section:induced}

In this section, we explain how to induce stationary states in the context of Furstenberg measures. The approach presented here is rather pedestrian, but there are more synthetic ways of presenting the construction by using the Poisson transform and harmonic functions (via Theorem \ref{thm:harmonicity}), see \cite[Section 4.1]{BBHP20}.

Let us recall the notation we will use. Here, $G$ will be any connected semisimple Lie group with finite center, with a maximal compact subgroup $K$ and a minimal parabolic subgroup $P$, so that $G = KP$. We denote by $\nu_P \in \Prob(G/P)$ the unique $K$-invariant Borel probability measure on $G/P$, and by $\mu \in \Prob(G)$ a $K$-invariant admissible measure. Since $\mu \ast \nu_P$ is $K$-invariant, we have $\mu \ast \nu_P = \nu_P$, i.e.\ $\nu_P$ is $\mu$-stationary. This implies that $\nu_P$ is $G$-quasi-invariant (see \cite[Lemma 1.1]{NZ97}). Let $\Gamma< G$ be any lattice. By \cite[Theorem 3]{Fu67}, there exists a {\em Furstenberg} probability measure $\mu_0 \in \Prob(\Gamma)$, whose support coincides with $\Gamma$ and for which $(G/P, \nu_P)$ is the $(\Gamma, \mu_0)$-Poisson boundary \cite{Fu62a}.
 
Let $(M, \phi)$ be any $(\Gamma, \mu_0)$-von Neumann algebra with separable predual. We assume that $\phi \in M_\ast$ is a faithful normal state. We will construct a $\mu$-stationary faithful normal state $\varphi$ on the induced $G$-von Neumann algebra $\Ind_\Gamma^G(M)$. 

We denote by $(\pi_\phi, H_\phi, \xi_\phi)$ the corresponding GNS triple. Since $\phi \in M_\ast$ is faithful, $\pi_\phi$ is faithful. 
Therefore, we may identify $M$ with $\pi_\phi(M)$ and assume that $\pi_\phi = \id$. Choose a globally $\Gamma$-invariant ultraweakly dense separable unital $\rC^*$-subalgebra $A \subset M$. For example, start with any separable dense $\rC^*$-subalgebra $A_0$ of $M$ and choose $A = \rC^*(\Gamma \cdot A_0)$. 
Regarding $\phi|_A \in \mathcal S(A)$ as a $\mu_0$-stationary state on $A$, Theorem \ref{thm:poisson-map} yields a $\Gamma$-equivariant $\nu_P$-measurable boundary map $\beta_\phi : G/P \to \mathcal S(A) : w \mapsto \phi_{w}$ such that $\phi = \bary((\beta_\phi)_\ast \nu_P)$. Set $\nu = (\beta_\phi)_\ast \nu_P \in \Prob(\mathcal S(A))$. From our observations on absolutely continuous states, we deduce the following lemma. 

\begin{lem}\label{lem:key}
For every $g \in G$, set $\nu_g = (\beta_\phi)_\ast(g_\ast \nu_P) \in \Prob(\mathcal S(A))$ and $\phi_g = \bary(\nu_g) \in \mathcal S(A)$. Then $\phi_g$ extends to faithful normal state on $M$ that we still denote by $\phi_g \in M_\ast$. 

Moreover, for every $y \in M$, the bounded map $G \to \C : g \mapsto \phi_g(y)$ is $\mu$-harmonic hence continuous. In particular, the map $G \to \mathcal S(A) : g \mapsto \phi_g$ is weak$^*$-continuous.
\end{lem}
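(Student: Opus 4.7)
The plan is to establish the three assertions in order: that each $\phi_g$ extends to a faithful normal state on $M$, the $\mu$-harmonicity identity $\phi_g(y) = \int_G \phi_{gh}(y) \,\dd\mu(h)$ for $y \in M$, and the weak$^*$-continuity of $g \mapsto \phi_g \in \cS(A)$.

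For the first claim I would exploit the $G$-quasi-invariance of $\nu_P$: for every $g \in G$ the measures $g_\ast \nu_P$ and $\nu_P$ on $G/P$ are equivalent, so pushing them forward through $\beta_\phi$ produces equivalent Borel probability measures $\nu_g = (\beta_\phi)_\ast(g_\ast \nu_P)$ and $\nu = (\beta_\phi)_\ast \nu_P$ on $\cS(A)$. Proposition \ref{ac crit} then ensures that $\phi_g = \bary(\nu_g)$ and $\phi = \bary(\nu)$ are equivalent states on $A$; composing $\phi_g$ on $\pi_{\phi_g}(A)\dpr$ with the resulting normal unital $\ast$-isomorphism $\pi_\phi(A)\dpr \to \pi_{\phi_g}(A)\dpr$ yields a normal state on $M = \pi_\phi(A)\dpr$ extending $\phi_g|_A$. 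Since $\phi$ is faithful on $M$ by hypothesis, the second part of Proposition \ref{ac crit} transfers this faithfulness to $\phi_g$.

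For the $\mu$-harmonicity, I would first verify the identity when $y \in A$ by directly unfolding the definitions:
\begin{align*}
\int_G \phi_{gh}(y)\,\dd\mu(h) &= \int_G \int_{G/P} \beta_\phi(ghw)(y)\,\dd\nu_P(w)\,\dd\mu(h) \\
&= \int_{G/P} \beta_\phi(gu)(y)\,\dd(\mu \ast \nu_P)(u) = \phi_g(y),
\end{align*}
where the middle step is the change of variables $u = hw$ together with Fubini (applicable since the integrand is bounded and jointly Borel measurable), and the last equality uses the stationarity $\mu \ast \nu_P = \nu_P$. To extend the identity from $A$ to a general $y \in M$ with $\|y\| \leq 1$, I would invoke Kaplansky's density theorem together with the metrizability of the unit ball of $M$ for the ultraweak topology (valid since $M_\ast$ is separable): for such $y$ pick a sequence $(y_n) \subset A$ with $\|y_n\| \leq 1$ and $y_n \to y$ ultraweakly, then use normality of each $\phi_{gh}$ combined with dominated convergence on the right-hand side, and normality of $\phi_g$ on the left, to pass to the limit.

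Continuity of $g \mapsto \phi_g(y)$ for every $y \in M$ then follows for free from the standing fact, recalled at the beginning of Subsection 2.3, that every bounded $\mu$-harmonic function on $G$ is continuous; restricting to $y \in A$ yields the claimed weak$^*$-continuity of $g \mapsto \phi_g$ in $\cS(A)$. The main obstacle will be the first step: upgrading the equivalence of measures $\nu_g \sim \nu$ into a faithfulness-preserving normal extension of $\phi_g$ from $A$ to the whole of $M$. This is precisely what the noncommutative absolute continuity machinery of Section \ref{Section AC} was designed to provide, and without it the rest of the argument would not even make sense.
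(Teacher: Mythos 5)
Your proof is correct and follows essentially the same route as the paper's: $G$-quasi-invariance of $\nu_P$ combined with Proposition \ref{ac crit} for the faithful normal extension, harmonicity on $A$ deduced from $\mu \ast \nu_P = \nu_P$, and a Kaplansky-plus-dominated-convergence argument to pass from $A$ to all of $M$. The only cosmetic difference is that you approximate in the ultraweak topology where the paper approximates strongly, which is immaterial since normal states are ultraweakly continuous on bounded sets.
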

\begin{proof}
Since $\nu_P \in \Prob(G/P)$ is $G$-quasi-invariant, for every $g \in G$, $g_\ast \nu_P$ and $\nu_P$ are equivalent probability measures on $G/P$. This implies that for every $g \in G$, $\nu_g = (\beta_\phi)_\ast(g_\ast \nu_P)$ and $\nu = (\beta_\phi)_\ast \nu_P$ are equivalent probability measures on $\mathcal S(A)$. Since $\phi \in M_\ast$ is faithful, Proposition \ref{ac crit} implies that for every $g \in G$, $\phi_g$ extends to faithful normal state on $M = \pi_\phi(M)$. Moreover, by definition of the state $\phi_g = \bary((\beta_\phi)_\ast (g_\ast \nu_P)) \in \mathcal S(A)$, we have
\[ \forall g \in G, \forall a \in A, \quad \phi_g(a) = \int_{G/P} \phi_{gw}(a) \, {\rm d}\nu_P(w).\]
Since $\nu_P$ is $\mu$-stationary, it follows that the bounded function $G \to \C : g \mapsto \phi_g(a)$ is $\mu$-harmonic for all $a \in A$, that is, $\phi_{g}(a) = \int_G \phi_{gh}(a) \, {\rm d}\mu(h)$. Let $y \in M$ be any element. By Kaplansky's density theorem and since $M$ has separable predual, we may choose a sequence $(a_n)_{n \in \N}$ in $A$ such that $\sup_{n \in \N} \|a_n\| \leq \|y\|$ and $a_n \to y$ strongly in $M$ as $n \to \infty$. For every $g \in G$, since $\phi_g \in M_\ast$, we have $\lim_n \phi_g(a_n) = \phi_g(y)$. Lebesgue's dominated convergence theorem implies that 
\begin{equation}\label{eq:harmonic}
\forall g \in G, \forall y \in M, \quad \phi_g (y)= \int_G \phi_{gh}(y) \, {\rm d}\mu(h).
\end{equation}
 Therefore the bounded map $G \to \C : g \mapsto \phi_g(y)$ is $\mu$-harmonic hence continuous.  In particular, the map $G \to \mathcal S(A) : g \mapsto \phi_g$ is weak$^*$-continuous.
\end{proof}

In the above lemma, $\phi_g$ should be thought as a translate $g\phi$ of $\phi$ although there is no $G$-action on $M$. This simulation of a $G$-action will be used to untwist the cocycle that appears in the induction formula, and thus get a well-behaved state upstairs.

Choose a measurable section $\tau : G/\Gamma \to G$ and define the measurable $1$-cocycle $c: G \times G/\Gamma \to \Gamma$ by the formula $c(g,x) = \tau (gx)^{-1}g \tau(x) $ for every $g \in G$, $x \in G/\Gamma$. Denote by $m_{G/\Gamma} \in \Prob(G/\Gamma)$ the unique $G$-invariant Borel probability measure. Regard $\rL^\infty(G/\Gamma) \ovt M$ as the von Neumann algebra $\rL^\infty(G/\Gamma, M)$ of all essentially bounded measurable functions $F : G/\Gamma \to M$, modulo equality $m_{G/\Gamma}$-almost everywhere. Set $\mathcal M = \Ind_\Gamma^G(M)$. Recall from Subsection \ref{subsection:vN} that we may identify $\mathcal M$ with $\rL^\infty(G/\Gamma) \ovt M$. Under this identification, the continuous induced action $\Ind(\sigma) : G \curvearrowright \mathcal M$ is given by
\[\forall f \in \mathcal M, \forall g \in G, \quad \Ind(\sigma)_g (f)(x) = \sigma_{c(g,g^{-1}x)}(f(g^{-1}x)).\]
For simplicity, we will denote the induced action $\Ind(\sigma)$ by $\tsigma$.

\begin{lem}\label{lem:measure}
Let $f \in \mathcal M$ be any element. Then the function $G \times G/\Gamma \to \C : (g, x) \mapsto \phi_{g}(f(x))$ is measurable. In particular, the functions $(g, x) \mapsto \phi_{\tau(x)^{-1}g}(f(x))$ and $x \mapsto \phi_{\tau(x)^{-1}}(f(x))$ are measurable on $G \times G/\Gamma$ and $G/\Gamma$, respectively.
\end{lem}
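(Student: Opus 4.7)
The plan is to reduce the statement to a standard Carathéodory-style joint measurability argument. Set $F(g,x) = \phi_g(f(x))$. I will verify that $F$ is continuous in $g$ for each fixed $x$ and measurable in $x$ for each fixed $g$, and then use the second countability of $G$ to conclude joint measurability via a pointwise approximation by simple functions in the $g$-variable. The ``in particular'' clauses will then follow formally by pre-composing with measurable maps built out of $\tau$.

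\emph{Step 1: separate regularity.} For each fixed $x \in G/\Gamma$, the element $y = f(x)$ lies in $M$, so by Lemma \ref{lem:key} (the harmonicity and continuity statement), the map $g \mapsto \phi_g(y) = F(g,x)$ is continuous on $G$. For each fixed $g \in G$, the normal state $\phi_g \in M_\ast$ is well defined by Lemma \ref{lem:key}, and under the identification $\mathcal M = \rL^\infty(G/\Gamma) \ovt M = \rL^\infty(G/\Gamma, M)$, pairing $f$ with $\phi_g \in M_\ast$ produces an element of $\rL^\infty(G/\Gamma)$. Concretely, $x \mapsto \phi_g(f(x)) = F(g,x)$ is essentially bounded and measurable in $x$.

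\emph{Step 2: joint measurability.} This is the only slightly technical step, but it is standard once separate regularity is in hand. Since $G$ is a second countable locally compact group, fix a compatible metric on $G$ and, for each integer $n \geq 1$, a countable Borel partition $G = \bigsqcup_{k} V_{n,k}$ by sets of diameter at most $1/n$, together with a choice of points $g_{n,k} \in V_{n,k}$. Define
\[F_n(g, x) = \sum_{k} \mathbf{1}_{V_{n,k}}(g) \, F(g_{n,k}, x).\]
Each $F_n$ is jointly measurable because it is a countable sum of products of a Borel indicator in $g$ and a measurable function in $x$ (by Step 1). By the continuity of $F(\cdot, x)$ proved in Step 1, $F_n(g,x) \to F(g,x)$ pointwise on $G \times G/\Gamma$ as $n \to \infty$. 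Hence $F$ is jointly measurable.

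\emph{Step 3: the ``in particular'' parts.} The map $G \times G/\Gamma \to G \times G/\Gamma : (g,x) \mapsto (\tau(x)^{-1} g, x)$ is measurable because $\tau$ is measurable and multiplication and inversion in $G$ are continuous. Composing with the already measurable $F$ yields the measurability of $(g,x) \mapsto \phi_{\tau(x)^{-1} g}(f(x))$. Likewise, the map $G/\Gamma \to G \times G/\Gamma : x \mapsto (\tau(x)^{-1}, x)$ is measurable and its composition with $F$ gives the measurability of $x \mapsto \phi_{\tau(x)^{-1}}(f(x))$.

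The main (mild) obstacle is Step 2, namely ensuring that separate continuity/measurability genuinely yields joint measurability; this is handled by the standard simple-function approximation outlined above, and crucially exploits the separability of $G$ together with the continuity of $g \mapsto \phi_g$ from Lemma \ref{lem:key}.
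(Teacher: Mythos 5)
Your proof is correct, but it follows a genuinely different route from the paper's. The paper first establishes joint measurability for elements of the dense $\ast$-subalgebra $\mathcal A$ spanned by simple tensors $1_E \otimes y$ (where it is immediate from Lemma \ref{lem:key}), and then passes to a general $f \in \mathcal M$ via Kaplansky's density theorem, extracting a subsequence $(a_n)$ with $a_n(x) \to f(x)$ strongly for almost every $x$, so that $F$ is exhibited as a pointwise limit of jointly measurable functions on $G \times X_f$ for a conull set $X_f$. You instead verify separate regularity — continuity in $g$ for each fixed $x$ (directly from Lemma \ref{lem:key}) and measurability in $x$ for each fixed $g$ — and invoke the standard Carath\'eodory-type approximation by functions that are simple in the $g$-variable, exploiting the second countability of $G$. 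Your argument is arguably cleaner and yields measurability everywhere rather than off a null set, at the cost of the one assertion you pass over quickly: that for fixed $g$ the map $x \mapsto \phi_g(f(x))$ is measurable for an \emph{arbitrary} $f \in \rL^\infty(G/\Gamma, M)$. This is true — a representative of $f$ is a measurable field of operators, and pairing a measurable field against a normal state (a countable sum of vector states) is measurable — but it is exactly the point the paper's approximation-from-simple-tensors argument is designed to circumvent, so it deserves an explicit sentence rather than the appeal to the slice map alone. With that justification supplied, both proofs are complete and rest on the same essential input, namely the continuity of $g \mapsto \phi_g(y)$ from Lemma \ref{lem:key}.
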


\begin{proof}
Denote by $\mathcal A \subset \mathcal M$ the ultraweakly dense unital $\ast$-subalgebra consisting of all finite sums of elements in $\mathcal M$ of the form $1_E \otimes y$ where $E \subset G/\Gamma$ is a measurable subset and $y \in M$. Lemma \ref{lem:key} implies that for every $a \in \mathcal A \subset \mathcal M$, the map $G \times G/\Gamma \to \C : (g, x) \mapsto \phi_{g}(a(x))$ is measurable. 

Let now $f \in \mathcal M$ be any element. By Kaplansky's density theorem and since $\mathcal M$ has separable predual, we may choose a sequence $(a_n)_{n \in \N}$ in $\mathcal A$ such that $\sup_{n \in \N} \|a_n\|_\infty \leq \|f\|_\infty$ and $a_n \to f$ strongly in $\mathcal M$ as $n \to \infty$. Up to choosing a subsequence, there exists a $m_{G/\Gamma}$-conull measurable subset $X_f \subset G/\Gamma$ such that for every $x \in X_f$, we have $\sup_{n \in \N} \|a_n(x)\| \leq \|f\|_\infty$ and $a_n(x) \to f(x)$ strongly in $ M$ as $n \to \infty$ (see \cite[Exercise IV.8.2]{Ta02}). For every $(g, x) \in G \times X_f$, since $\phi_{g} \in M_\ast$, we have $\lim_n \phi_{g}(a_n(x)) = \phi_{g}(f(x))$. This implies that the map $G \times X_f \to \C : g \mapsto \phi_{g}(f(x))$ is measurable since it is the pointwise limit of the sequence of measurable maps $G \times X_f \to \C : (g, x) \mapsto \phi_{g}(a_n(x))$. This gives the first statement. The rest follows from the measurability of $\tau$.
\end{proof}

We are now ready to state and prove the main result of this section.

\begin{thm}\label{induced stationary state}
Keep the same notation as above. Then the map $\varphi: \cM \to \C$ defined by the following formula is a $\mu$-stationary faithful normal state on $\mathcal M$:
\[\forall f \in \mathcal M, \quad \varphi(f) = \int_{G/\Gamma} \phi_{\tau(x)^{-1}}(f(x)) \, {\rm d}m_{G/\Gamma}(x).\] Moreover, $\varphi$ is $G$-invariant if and only if $\phi$ is $\Gamma$-invariant.
\end{thm}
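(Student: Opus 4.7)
The plan is to establish the three assertions (normality/faithfulness, $\mu$-stationarity, and the $G$-invariance equivalence) from a single computational identity for the $G$-translates $g\varphi$, which we then combine with the $\mu$-harmonicity of $g \mapsto \phi_g(y)$ from Lemma \ref{lem:key}. Well-definedness and normality of $\varphi$ follow from Lemma \ref{lem:measure} (measurability of the integrand) together with the identification $\mathcal M_\ast \cong \rL^1(G/\Gamma, M_\ast)$, under which $\varphi$ corresponds to the essentially bounded measurable $M_\ast$-valued function $x \mapsto \phi_{\tau(x)^{-1}}$ of norm one. Positivity and $\varphi(1) = 1$ are immediate. Faithfulness follows from Lemma \ref{lem:key}, which ensures each $\phi_{\tau(x)^{-1}}$ is a faithful normal state on $M$: if $\varphi(f^\ast f) = 0$ then the nonnegative integrand $\phi_{\tau(x)^{-1}}(f(x)^\ast f(x))$ vanishes for $m_{G/\Gamma}$-almost every $x$, so $f(x) = 0$ almost everywhere, and $f = 0$.

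The central observation is the identity
\[ \phi_{\gamma g} = \phi_g \circ \sigma_\gamma^{-1}, \qquad \forall \gamma \in \Gamma, \ \forall g \in G, \]
which follows by expanding $\phi_{\gamma g}(a) = \int_{G/P} \beta_\phi(\gamma g w)(a) \, \d\nu_P(w)$, using the $\Gamma$-equivariance of $\beta_\phi$, and extending from $A$ to $M$ by normality. We then compute $g\varphi(f) = \varphi(\tsigma_{g^{-1}}(f))$: unfolding the formula for the induced action and performing the change of variables $y = gx$ (legitimate by $G$-invariance of $m_{G/\Gamma}$) produces
\[ g\varphi(f) = \int_{G/\Gamma} \phi_{\tau(g^{-1}y)^{-1}} \circ \sigma_{c(g^{-1}, y)}(f(y)) \, \d m_{G/\Gamma}(y). \]
Writing $c(g^{-1}, y)^{-1} = \tau(y)^{-1} g \tau(g^{-1}y)$ and applying the identity in the equivalent form $\phi_h \circ \sigma_\gamma = \phi_{\gamma^{-1} h}$, the integrand telescopes to $\phi_{c(g^{-1},y)^{-1} \tau(g^{-1}y)^{-1}} = \phi_{\tau(y)^{-1} g}$, yielding the key formula
\[ g\varphi(f) = \int_{G/\Gamma} \phi_{\tau(y)^{-1} g}(f(y)) \, \d m_{G/\Gamma}(y). \]

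For $\mu$-stationarity, we integrate the key formula against $\d\mu(g)$, swap integrals by Fubini (measurability from Lemma \ref{lem:measure}), and invoke the $\mu$-harmonicity \eqref{eq:harmonic} of Lemma \ref{lem:key}: the inner integral $\int_G \phi_{\tau(y)^{-1} g}(f(y)) \, \d\mu(g)$ collapses to $\phi_{\tau(y)^{-1}}(f(y))$, recovering $\varphi(f)$. For the forward direction of the equivalence, if $\phi$ is $\Gamma$-invariant then the constant map $w \mapsto \phi$ is itself a $\Gamma$-equivariant measurable map $G/P \to \mathcal S(A)$ with barycenter $\phi$, so by essential uniqueness in Theorem \ref{thm:poisson-map} we have $\beta_\phi \equiv \phi$ for $\nu_P$-almost every $w$. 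Since $g_\ast \nu_P$ is absolutely continuous with respect to $\nu_P$ by $G$-quasi-invariance, it follows that $\phi_g = \phi$ for every $g \in G$, and $\varphi$ reduces to $f \mapsto \int_{G/\Gamma} \phi(f(x)) \, \d m_{G/\Gamma}(x)$, which is manifestly $G$-invariant.

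Conversely, assume $g\varphi = \varphi$ for every $g \in G$. Applying this to elementary tensors $f = 1_E \ot x$ and using the key formula gives
\[ \int_E \bigl[ \phi_{\tau(y)^{-1} g}(x) - \phi_{\tau(y)^{-1}}(x) \bigr] \, \d m_{G/\Gamma}(y) = 0 \]
for every measurable $E \subset G/\Gamma$, every $g \in G$, and every $x \in M$; hence for each fixed pair $(g, x)$ the integrand vanishes for $m_{G/\Gamma}$-almost every $y$. Choosing a norm-dense sequence $(x_n) \subset M$ and using both the continuity of $g \mapsto \phi_g(x)$ from Lemma \ref{lem:key} and Fubini, we extract a single $m_{G/\Gamma}$-conull set $Y \subset G/\Gamma$ on which $\phi_{\tau(y)^{-1} g} = \phi_{\tau(y)^{-1}}$ as normal states on $M$ for \emph{all} $g \in G$ simultaneously. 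Fixing any $y_0 \in Y$ and setting $h = \tau(y_0)^{-1}$, as $g$ ranges over $G$ so does $hg$, and therefore $\phi_{g'} = \phi_h$ for every $g' \in G$. Taking $g' = e$ gives $\phi_h = \phi$, and then taking $g' = \gamma \in \Gamma$ together with the key identity (which at $g=e$ reads $\phi_\gamma = \gamma \phi$) gives $\phi = \gamma \phi$, i.e.\ $\phi$ is $\Gamma$-invariant. The main technical obstacle is precisely this converse direction --- extracting a uniform-in-$g$ conull set from the fiberwise almost-everywhere identity via continuity and separability --- together with getting the cocycle simplification right; the remaining steps are a careful unwinding of formulas.
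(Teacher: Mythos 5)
Your proposal is correct and follows essentially the same route as the paper: the same cocycle computation yielding $g\varphi(f)=\int_{G/\Gamma}\phi_{\tau(x)^{-1}g}(f(x))\,\mathrm{d}m_{G/\Gamma}(x)$, harmonicity for stationarity, essential constancy of the boundary map for the forward implication, and an a.e.-to-everywhere upgrade (via continuity of $g\mapsto\phi_g$) for the converse, with your normality argument via the identification $\cM_\ast\cong\rL^1(G/\Gamma,M_\ast)$ being a clean alternative to the paper's Kaplansky/dominated-convergence argument. One cosmetic fix: in the converse direction, take the countable norm-dense sequence inside the separable $\rC^*$-algebra $A$ rather than in $M$ (which need not be norm-separable), and conclude equality of the normal states $\phi_{\tau(y)^{-1}g}=\phi_{\tau(y)^{-1}}$ by ultraweak density of $A$ in $M$.
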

\begin{proof}
Lemma \ref{lem:measure} justifies that the formula defining $\varphi$ makes sense. For this, denote by $\Ball_{\mathcal M}(0, 1)$ the closed ball in $\mathcal M$ of center $0$ and radius $1$ with respect to the uniform norm. Let us first check that the state $\varphi$ is {\em normal}. Let $f \in \Ball_{\mathcal M}(0, 1)$ be any element and $(f_n)_n$ any sequence in $\Ball_{\mathcal M}(0, 1)$ such that $f_n \to f$ strongly in $\mathcal M$. By contradiction, assume that the sequence $(\varphi(f_n))_{n \in \N}$ does not converge to $\varphi(f)$. Up to choosing a subsequence, we may assume that there exists $\varepsilon > 0$ such that $\inf_{n \in \N} |\varphi(f) - \varphi(f_n)| \geq \varepsilon$. Up to choosing a further subsequence, we may assume that for $m_{G/\Gamma}$-almost every $x \in G/\Gamma$, we have $f_n(x) \to f(x)$ strongly in $M$ as $n \to \infty$ (see \cite[Exercise IV.8.2]{Ta02}). Since $\phi_{\tau(x)^{-1}} \in M_\ast$ for every $x \in G/\Gamma$, we have   $\lim_n \phi_{\tau(x)^{-1}}(f_n(x)) = \phi_{\tau(x)^{-1}}(f(x))$ for $m_{G/\Gamma}$-almost every $x \in G/\Gamma$. Lebesgue's dominated convergence theorem implies that $\lim_n \varphi(f_n) = \varphi(f)$. This is a contradiction. Therefore, $\varphi$ is a normal state.

Observe that if $\varphi(f^*f) = 0$, then for $m_{G/\Gamma}$-almost every $x \in G/\Gamma$, $\phi_{\tau(x)^{-1}}(f(x)^*f(x)) = 0$. Since $\phi_{\tau(x)^{-1}}$ is faithful for every $x \in G/\Gamma$, this implies that $f = 0$, proving that $\varphi$ is faithful.

We now prove the stationarity condition. Since the boundary map $\beta_\phi : G/P \to \mathcal S(A)$ is $\nu_P$-measurable and $\Gamma$-equivariant and since $\phi_g \in M_\ast$ for every $g \in G$, it follows that 
\begin{equation}\label{Gamma equiv}
\forall  \gamma \in \Gamma, \forall g \in G, \forall y \in M, \quad \gamma \phi_g (y)= \phi_{\gamma g}(y).
\end{equation}
Then for every $f \in \cM$ and every $g \in G$, we have
\begin{align*}
g\varphi(f) = \varphi(\tsigma_g^{-1}( f)) & =  \int_{G/\Gamma} \phi_{\tau(x)^{-1}}(\sigma_{c(g^{-1},gx)}(f(gx))) \, {\rm d}m_{G/\Gamma}(x)\\
& = \int_{G/\Gamma}  \phi_{c(g^{-1},gx)^{-1}\tau(x)^{-1}}(f(gx)) \, {\rm d}m_{G/\Gamma}(x) \quad (\text{using } \eqref{Gamma equiv})\\
& = \int_{G/\Gamma} \phi_{\tau(gx)^{-1}g}(f(gx)) \, {\rm d}m_{G/\Gamma}(x).
\end{align*}
Since $m_{G/\Gamma} \in \Prob(G/\Gamma)$ is $G$-invariant, we conclude that
\begin{equation}\label{induced shift}
g\varphi( f) = \int_{G/\Gamma} \phi_{\tau(x)^{-1}g}(f(x)) \, {\rm d}m_{G/\Gamma}(x).
\end{equation}
Combining \eqref{induced shift} with \eqref{eq:harmonic} and using Lemma \ref{lem:measure} with Fubini's theorem, for every $f \in \mathcal M$, we obtain
\begin{align*}
 \int_G g\varphi( f) \, {\rm d}\mu(g)  &=  \int_G \left(\int_{G/\Gamma} \phi_{\tau(x)^{-1}g}(f(x))  \,  {\rm d}m_{G/\Gamma}(x) \right) {\rm d}\mu(g)\\
&=  \int_{G/\Gamma} \left( \int_G \phi_{\tau(x)^{-1}g}(f(x)) \, {\rm d}\mu(g) \right) {\rm d}m_{G/\Gamma}(x) \\
&= \int_{G/\Gamma}  \phi_{\tau(x)^{-1}}(f(x)) \, {\rm d}m_{G/\Gamma}(x)\\
& = \varphi(f).  
\end{align*}
Therefore, $\varphi$ is $\mu$-stationary. 

Finally, we prove that $\varphi$ is $G$-invariant if and only if $\phi$ is $\Gamma$-invariant. First, assume that $\phi$ is $\Gamma$-invariant. The corresponding boundary map $\beta_\phi : G/P \to \mathcal S(A)$ is essentially constant. This implies that $\phi_g = \phi$ for every $g \in G$. By construction, this implies that $\varphi = m_{G/\Gamma} \otimes \phi$ and so $\varphi$ is $G$-invariant. Conversely, assume that $\varphi$ is $G$-invariant. By \eqref{induced shift}, for every $g \in G$ and every $f \in \cM$,  we have  
\[ \int_{G/\Gamma} \phi_{\tau(x)^{-1}}(f(x)) \, {\rm d}m_{G/\Gamma}(x)  = \varphi(f) = g\varphi( f) = \int_{G/\Gamma} \phi_{\tau(x)^{-1}g}(f(x)) \, {\rm d}m_{G/\Gamma}(x) .\]
Since this holds true for every element of the form $f = h \otimes a \in \mathcal M$ with $h \in \rL^\infty(G/\Gamma)$ and $a \in A$, we conclude that for every $g \in G$, every $a \in A$ and $m_{G/\Gamma}$-almost every $x \in G/\Gamma$, we have $\phi_{\tau(x)^{-1}}(a) =  \phi_{\tau(x)^{-1}g}(a)$. Since $A$ is separable, this implies that for every $g \in G$ and $m_{G/\Gamma}$-almost every $x \in G/\Gamma$, we have $\phi_{\tau(x)^{-1}} =  \phi_{\tau(x)^{-1}g}$. 
By Fubini's theorem there exists $x \in G/\Gamma$ such that for Haar-almost every $g \in G$, we have $\phi_{\tau(x)^{-1}} =  \phi_{\tau(x)^{-1}g}$. Thus, the map $g \in G \mapsto \phi_g \in \cS(A)$ is Haar-almost everywhere constant. By Lemma \ref{lem:key}, this map is continuous, hence everywhere constant. In view of the $\Gamma$-equivariance property \eqref{Gamma equiv}, this implies that the state $\phi = \phi_e \in \mathcal S(A)$ is $\Gamma$-invariant. Thus, $\phi \in M_\ast$ is $\Gamma$-invariant.
\end{proof}

\section{A noncommutative Nevo--Zimmer theorem}\label{section:NZ}

In this section, we prove a noncommutative analogue of Nevo--Zimmer's structure theorem  for stationary actions of connected semisimple Lie groups on arbitrary von Neumann algebras (see \cite[Theorem 1]{NZ00} for stationary actions on measure spaces). 

\subsection{Background on tensor-slice maps on von Neumann algebras}

In what follows, we collect a few facts about von Neumann algebras that we will use in the proof of Theorem \ref{thm:NZ}. Recall that for any von Neumann algebra $\mathcal P$, the predual $\mathcal P_\ast$ has a canonical $\mathcal P$-$\mathcal P$-bimodule structure given by 
\[\forall b, c, T \in \mathcal P, \forall \rho \in \mathcal P_\ast, \quad (b\rho c)(T) = \rho(c T b).\]
Moreover, the isometric linear map $\mathcal P \to ((\mathcal P)_\ast)^* : T \mapsto (\rho \mapsto \rho(T))$ is surjective and ultraweakly-weak$^*$-continuous. We then identify $\mathcal P = ((\mathcal P)_\ast)^*$. We denote by $\id_{\mathcal P} \in \Aut(\mathcal P)$ the identity $\ast$-automorphism of $\mathcal P$. If $\mathcal P = \rL^\infty(X)$, then we write $\id_{\mathcal P} = \id_X$.

Let $\mathcal P = \mathcal P_1 \ovt \mathcal P_2$ be any tensor product von Neumann algebra. For every $\rho_1 \in (\mathcal P_1)_\ast$ and every $\rho_2 \in (\mathcal P_2)_\ast$, we consider the maps $\id_{\mathcal P_1} \otimes \rho_2 : x_1 \ot x_2 \in \mathcal P \mapsto \rho_2(x_2)x_1 \in \mathcal P_1$ and $\rho_1 \otimes \id_{\mathcal P_2} : x_1 \ot x_2 \in \mathcal P \mapsto \rho_1(x_1)x_2 \in \mathcal P_2$. Whenever $\rho_1$ and $\rho_2$ are unital, these maps can be seen as projections, after identifying $\cP_1$ with $\cP_1 \ot \C1$ and $\cP_2$ with $\C1 \ot \cP_2$. They can be formally defined by using GNS representations of $|\rho_1|$ and $|\rho_2|$, and are then seen to be completely bounded normal maps, called the tensor-slice maps in the terminology of \cite[Section 2]{GK95}. They satisfy $\| \id_{\mathcal P_1} \otimes \rho_2\| \leq \|\rho_2\|$, $\Vert \rho_1 \ot \id_{\cP_2} \Vert \leq \Vert \rho_1 \Vert$ and 
\[\rho_1 \circ (\id_{\mathcal P_1} \otimes \rho_2) = \rho_1 \otimes \rho_2 = \rho_2 \circ (\rho_1 \otimes \id_{\mathcal P_2}).\]
Let $\rho_2 \in (\mathcal P_2)_\ast$ and assume that $(\zeta_{n})_{n \in \N}$ is a sequence in $(\mathcal P_2)_\ast$ such that $\lim_n \|\zeta_n - \rho_2\| = 0$. Then for every $T \in \mathcal P$, we have that $(\id_{\mathcal P_1} \otimes \zeta_n)(T) \to (\id_{\mathcal P_1} \otimes \rho_2)(T)$ ultraweakly in $\mathcal P_1$ as $n \to \infty$.

Assume now that $\mathcal P = \mathcal P_1 \ovt \mathcal P_2 \ovt \mathcal P_3$. By construction of the tensor-slice maps, for every $\rho_2 \in (\mathcal P_2)_\ast$ and every $\rho_3 \in (\mathcal P_3)_\ast$, we have
\begin{equation}\label{eq:tensor-slice} \forall T \in \mathcal P, \quad (\id_{\mathcal P_1} \otimes \rho_2 \otimes \rho_3)(T)  = (\id_{\mathcal P_1} \otimes \rho_2) \left((\id_{\mathcal P_1} \otimes \id_{\mathcal P_2} \otimes \rho_3)(T) \right).
\end{equation}

\subsection{A noncommutative Nevo--Zimmer theorem}

The main result of this section is the following noncommutative Nevo--Zimmer theorem. 

\begin{thm}\label{thm:NZ}
Let $G$ be any connected semisimple Lie group with finite center and no nontrivial compact factors, all of whose simple factors are of real rank at least two. Let  $\mu \in \Prob(G)$ be any $K$-invariant admissible Borel probability measure. Let $(\mathcal M, \varphi)$ by any ergodic $(G, \mu)$-von Neumann algebra. Then the following dichotomy holds.
\begin{itemize}
\item Either $\varphi$ is $G$-invariant.
\item Or there exist a proper parabolic subgroup $P \subset Q \subsetneq G$ and a $G$-equivariant normal unital $\ast$-embedding $\Theta : \rL^\infty(G/Q, \nu_Q) \to \mathcal M$ such that $\varphi \circ \Theta = \nu_Q$.
\end{itemize}
\end{thm}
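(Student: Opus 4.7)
The plan is to adapt Nevo--Zimmer's strategy \cite{NZ00} to the noncommutative setting, replacing measure-theoretic arguments by operator-algebraic ones. First I would apply Takesaki's regularization \cite[Proposition XIII.1.2]{Ta03b} to pick a separable, $G$-invariant, ultraweakly dense unital $\rC^*$-subalgebra $A \subset \mathcal{M}$ on which the $G$-action is norm continuous, so that $\varphi|_A \in \mathcal{S}(A)$ is a $\mu$-stationary state. By Corollary \ref{exmp:C*2}, there is a unique $P$-invariant state $\psi \in \mathcal{S}(A)$ satisfying
\[
\varphi|_A \;=\; \int_{G/P} g\psi \, \dd\nu_P(gP).
\]
Let $Q := \Stab_G(\psi)$; this is a closed subgroup of $G$ containing $P$, hence a standard parabolic $Q = P_\theta$ for some $\theta \subset \Delta$.

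If $Q = G$, then $\psi$ is $G$-invariant, so the barycentric formula gives $\varphi|_A = \psi$, and ultraweak density of $A$ together with normality of $\varphi$ and of the $G$-action imply that $\varphi$ is $G$-invariant on all of $\mathcal{M}$. This handles the first branch of the dichotomy.

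Suppose now $Q \subsetneq G$. The $G$-orbit map factors through a $G$-equivariant continuous injection $\iota : G/Q \hookrightarrow \mathcal{S}(A)$, and $\iota_\ast \nu_Q \in \Prob(\mathcal{S}(A))$ has barycenter $\varphi|_A$ by Theorem \ref{thm:representation}. Using the direct integral GNS construction of Lemma \ref{GNS identification}, I would identify the Hilbert space of $\varphi|_A$ with $\int_{G/Q}^\oplus H_{g\psi} \, \dd\nu_Q(gQ)$, so that the diagonal $\rL^\infty(G/Q, \nu_Q)$ acts naturally on it, a priori inside the commutant of $\pi_\varphi(A)$. The goal is to produce the embedding $\Theta$ by showing that this diagonal algebra actually lives inside $\pi_\varphi(A)\dpr = \mathcal{M}$. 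To achieve this, I expect to exploit the Mautner-type contraction from elements $s \in S'_\theta$: such $s$ act by conjugation as contractions on $V_\theta$ and expansions on $\overline V_\theta$, which (combined with tensor-slice maps $\id_\cM \otimes \rho$ applied in $\mathcal{M} \ovt \rL^\infty(G/Q)$) should allow one to realize multiplication by each $f \in \rL^\infty(G/Q, \nu_Q)$ as an ultraweak limit of elements coming from $A$, hence inside $\mathcal{M}$ itself. The rank $\geq 2$ hypothesis on the simple factors enters crucially here, via the non-emptiness of $S'_\theta$ for the chosen $\theta \subsetneq \Delta$.

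The main obstacle is precisely this last step. In the commutative case treated by Nevo--Zimmer, one exploits that $\rL^\infty(X, \xi)$ is a maximal abelian subalgebra of $\mathbf{B}(\rL^2(X,\xi))$, so the diagonal $\rL^\infty(G/Q)$ automatically sits inside $\rL^\infty(X,\xi)$; this short-circuit has no analogue when $\mathcal{M}$ is genuinely noncommutative. Producing the embedding therefore requires a genuinely new operator-algebraic input: a careful interplay between the Lie-theoretic contraction dynamics of $S'_\theta$ on $\mathcal{M}$ and the structure theory of parabolic subgroups recalled in Subsection \ref{subsection:G/P}, orchestrated via normal tensor-slice maps. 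This is the conceptual content that goes beyond a mere translation of the Nevo--Zimmer proof.
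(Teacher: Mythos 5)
Your setup — regularization to a norm-continuous compact model $A$, the unique $P$-invariant state $\psi$ with $\varphi|_A = \int_{G/P} g\psi \, \dd\nu_P(gP)$, and the disposal of the case where $\psi$ is $G$-invariant — coincides with the opening of the paper's proof. But the proof stops there: the entire construction of the embedding $\Theta$, which is the content of the theorem, is replaced by "I expect to exploit\dots should allow one to\dots", and you yourself flag that this step "requires a genuinely new operator-algebraic input" without supplying it. That is a genuine gap, not a compressed argument. Two concrete obstructions to the route you sketch. First, the diagonal copy of $\rL^\infty(G/Q,\nu_Q)$ acts on the full direct integral $\int_{G/Q}^\oplus H_{g\psi}\,\dd\nu_Q$, whereas $H_{\varphi}$ is only the cyclic subspace $\overline{\pi_\nu(A)\xi_\nu}$ (Lemma \ref{GNS identification}); the compression of the diagonal algebra to that subspace is a priori just a unital completely positive map, not a $\ast$-homomorphism, so the statement "the diagonal lives inside $\mathcal M$" is not even well posed without further work. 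Second, any Mautner-type contraction argument runs through the GNS space of $\psi$, and $\psi$ need not be faithful on $\mathcal N = \pi_\psi(\mathcal A)\dpr$ in the noncommutative setting; this is exactly why Mautner's phenomenon fails here and why the paper introduces the central support projection $q_{\theta,s}$ (Lemma \ref{lem:NZ2}) to recover $\cN^s q_{\theta,s} \subset \cN^{W_{\theta,s}}$. You do not address this. Finally, aiming at $Q=\Stab_G(\psi)$ specifically is stronger than what the theorem asserts and than what the method delivers.

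For comparison, the paper does not try to locate $\rL^\infty(G/Q)$ directly inside $\mathcal M$ via a disintegration over $G/Q$. It embeds $\mathcal M$ $G$-equivariantly into the induced algebra $\rL^\infty(G/P)\ovt\cN \cong \rL^\infty(\oV)\ovt\cN$ (Lemma \ref{lem:iota}), uses ergodic averages along $s\in S_\theta'$ (Lemma \ref{lem:NZ1}) together with $q_{\theta,s}$ to cut down to a $G$-invariant subalgebra $\cM_0$ on which $W_{\theta,s}$ acts trivially downstairs, then uses the contracting dynamics of $s^{n}\overline h$ on $\oV_\theta$ plus essential ranges to sandwich $\iota_0(\cM_0)$ between $\C1\ovt\cQ_0$ and $\rL^\infty(\oV_\theta)\ovt\cQ_0$, invokes the Ge--Kadison and Str\u{a}til\u{a}--Zsid\'o splitting theorems to show that $\mathcal Z(\cM_0)$ is \emph{not} of product form and carries a non-$\varphi$-preserving $G$-action, and only then applies the \emph{commutative} Nevo--Zimmer theorem to $\mathcal Z(\cM_0)$ to produce $Q$ and $\Theta$. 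The reduction to the center and the appeal to the splitting theorems are precisely the "new operator-algebraic input" your proposal calls for but does not provide.
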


Before proving Theorem \ref{thm:NZ}, let us explain the main conceptual difficulty that appears in this noncommutative setting compared to the original commutative result \cite[Theorem 1]{NZ00}. First of all, let us mention that Nevo--Zimmer had proved earlier a weaker version of their result, assuming some mixing condition on the $P$-action on the space (see \cite{NZ97}). To get rid of this mixing assumption, they had the idea of using the so-called {\em Gauss map}, which was the key novelty in \cite{NZ00}. This new argument relied on the consideration of the {\em stabilizer map} associated with an action  $G \actson (X,\mu)$, defined as the $G$-equivariant measurable map $X \to  \Sub(G) : x \mapsto \Stab(x)$. Here, $\Sub(G)$ denotes the space of closed subgroups of $G$, endowed with the Chabauty topology. Unfortunately, there is no analogue of such a stabilizer map for actions of $G$ on arbitrary von Neumann algebras, so it is hopeless to prove Theorem \ref{thm:NZ} by simply translating Nevo--Zimmer's proof in noncommutative terms. Moreover for our purposes, the analogue of the mixing condition on the $P$-action used in \cite{NZ97} is not guaranteed. We indeed need the full strength of Theorem~\ref{thm:NZ}.

To get around this issue, we will start the proof in a similar fashion to that of \cite[Theorem 1]{NZ97} until the critical point where we would need to use the Gauss map is reached. From that point on, we will use the strong dynamics of $G$ on a well chosen homogeneous space $G/P_\theta$ to be able to reduce to the commutative case by using a result by Ge--Kadison \cite{GK95} and its generalization by Str\u{a}til\u{a}--Zsid\'o \cite{SZ98}. Before starting the proof of Theorem \ref{thm:NZ}, let us give some notation and a few preliminary lemmas.

\begin{lem} 
We may assume that $\mathcal M$ has separable predual.
\end{lem}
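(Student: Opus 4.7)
The plan is a standard separability reduction. We argue that if Theorem~\ref{thm:NZ} is established for every ergodic $(G,\mu)$-von Neumann algebra with separable predual, then it holds in general. Given an arbitrary ergodic $(G,\mu)$-von Neumann algebra $(\mathcal M, \varphi)$, if $\varphi$ is $G$-invariant, the first alternative in the conclusion already holds and nothing is to be proved. Otherwise pick $x_0 \in \mathcal M$ and $g_0 \in G$ with $\varphi(\sigma_{g_0}(x_0)) \neq \varphi(x_0)$. The goal is to construct a $G$-invariant von Neumann subalgebra $\mathcal N \subset \mathcal M$ with separable predual, containing $x_0$, on which the restricted data $(\mathcal N,\varphi|_{\mathcal N})$ still form an ergodic $(G,\mu)$-von Neumann algebra, so that the separable-predual case applies and yields the second alternative for $\mathcal N$, hence for $\mathcal M$.

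To build such $\mathcal N$, first use that $G$ is second countable to choose a countable dense subgroup $G_0 < G$. Let $\mathcal A \subset \mathcal M$ be the unital $*$-subalgebra generated by the countable set $\{\sigma_g(x_0) : g \in G_0\}$, and set $\mathcal N = \mathcal A''$. Since $\mathcal A$ is norm-separable, $\mathcal N$ has separable predual. The set $\{\sigma_g(x_0) : g \in G_0\}$ is $G_0$-invariant by the group law, so $\mathcal A$ is $G_0$-invariant; the normality of each $\sigma_g$ then propagates $G_0$-invariance to $\mathcal N$. Now for any $g \in G$ and any $x \in \mathcal N$, picking $g_n \in G_0$ with $g_n \to g$, the continuity of the action on $\mathcal M$ gives $\sigma_{g_n}(x) \to \sigma_g(x)$ ultraweakly, and since each $\sigma_{g_n}(x)$ lies in the ultraweakly closed set $\mathcal N$, so does $\sigma_g(x)$. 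Thus $\mathcal N$ is $G$-invariant, and $G$ acts continuously on $\mathcal N$ by $*$-automorphisms.

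The restriction $\varphi|_{\mathcal N}$ is a normal state; it is $\mu$-stationary because the stationarity relation $\varphi(x) = \int_G g\varphi(x)\,\d\mu(g)$ is pointwise in $x$ and $\sigma_g^{-1}(x) \in \mathcal N$ whenever $x \in \mathcal N$. The induced action is ergodic since $\mathcal N^G \subset \mathcal M^G = \C 1$. Finally $x_0 \in \mathcal N$ witnesses that $\varphi|_{\mathcal N}$ is not $G$-invariant. Applying the separable-predual version of Theorem~\ref{thm:NZ} to $(\mathcal N, \varphi|_{\mathcal N})$ produces a proper parabolic subgroup $P \subset Q \subsetneq G$ together with a $G$-equivariant normal unital $*$-embedding $\Theta : \rL^\infty(G/Q, \nu_Q) \to \mathcal N$ satisfying $\varphi|_{\mathcal N} \circ \Theta = \nu_Q$; composing with the inclusion $\mathcal N \hookrightarrow \mathcal M$ yields the embedding required for $\mathcal M$.

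There is no genuine obstacle in this reduction: ergodicity, $\mu$-stationarity, continuity of the action and separability of the predual are all inherited by the $G$-invariant subalgebra generated, in the von Neumann sense, by a countable $G_0$-orbit of a single element. The only point worth checking carefully is that $G$-invariance of $\mathcal N$ follows from mere $G_0$-invariance, which uses the continuity hypothesis on the action $G \curvearrowright \mathcal M$ from Subsection~\ref{subsection:vN}.
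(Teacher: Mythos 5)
Your overall strategy is the same as the paper's: restrict to the $G$-invariant von Neumann subalgebra generated by the orbit of an element witnessing non-invariance of $\varphi$, check that ergodicity, stationarity and continuity pass to the subalgebra, and apply the separable case. The checks of $G$-invariance (via a countable dense subset of $G$ and ultraweak continuity), stationarity and ergodicity are all fine.

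However, the one step that actually requires an argument is wrong as you state it. You claim that ``since $\mathcal A$ is norm-separable, $\mathcal N=\mathcal A''$ has separable predual.'' This is false in general: the bicommutant of a norm-separable (even singly generated) $\ast$-algebra need not have separable predual. For instance, if $I$ is uncountable and $a\in\mathbf B(\ell^2(I))$ is a diagonal operator with pairwise distinct entries, then $\{a\}''=\ell^\infty(I)$, whose predual $\ell^1(I)$ is not separable. Separability of the predual is equivalent to the existence of a faithful normal representation on a separable Hilbert space, and to produce one you must use that the state $\varphi$ is \emph{faithful} on $\mathcal M$ (hence on $\mathcal N$) --- this is exactly what Lemma \ref{lem:support} gives, since the action is ergodic and $\varphi$ is $\mu$-stationary. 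With faithfulness in hand, the GNS representation of $\mathcal N$ with respect to $\varphi|_{\mathcal N}$ is faithful and normal, and its Hilbert space is separable because the countable set $\mathcal A\,\xi_\varphi$ is dense in $\overline{\mathcal N\xi_\varphi}$ (Kaplansky density). This is precisely how the paper argues; without invoking faithfulness of $\varphi$ your reduction does not go through.
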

\begin{proof} If $\varphi$ is not $G$-invariant, then we may find $x \in \cM$ such that $g\phi(x)$ is not constant as $g$ varies in $G$. So the restriction of $\varphi$ to the von Neumann subalgebra $\cM_0$ generated by $G \cdot x$ is not $G$-invariant. We claim that $\cM_0$ has separable predual. Once this is proven, the theorem applied to $\cM_0$ will imply the conclusion for $\cM$.

We thus have to represent $\cM_0$ faithfully on a separable Hilbert space. Denote by $S \subset G$ a countable dense subset. Then the $\ast$-invariant $\Q$-algebra generated by $S \cdot x$ is a countable weakly dense subset of $\cM_0$. Therefore the GNS Hilbert space associated with $\varphi|_{\cM_0}$ is separable. Since moreover $\varphi$ is faithful on $\cM$ (hence on $\cM_0$), $\cM_0$ is faithfully represented on this separable GNS Hilbert space. This is the desired conclusion.
\end{proof}

Let us introduce some terminology. Let $(\mathcal M, \varphi)$ be any ergodic $(G, \mu)$-von Neumann algebra with separable predual and denote by $\sigma : G \curvearrowright \mathcal M$ the corresponding continuous action. Recall that $\varphi \in \mathcal M_\ast$ is a {\em faithful} normal state (see Lemma \ref{lem:support}). Therefore the corresponding GNS representation $\pi_\varphi$ is faithful, and we may thus identify $\pi_\varphi(\mathcal M)$ with $\mathcal M$ and assume that $\pi_\varphi = \id$. Choose a globally $G$-invariant ultraweakly dense separable unital $\rC^*$-subalgebra $\mathcal A \subset \mathcal M$ such that the action $G \curvearrowright \mathcal A$ is norm continuous (see the proof of \cite[Proposition XIII.1.2]{Ta03b}). Let $\psi \in \mathcal S(\mathcal A)$ be the unique $P$-invariant state corresponding to $\varphi |_{\mathcal A}$ so that $\varphi|_{\mathcal A} = \int_{G/P} \psi \circ \sigma_{g}^{-1} \, {\rm d}\nu_P(gP)$ (see Theorem \ref{thm:representation} and Corollary \ref{exmp:C*2}). 

\begin{lem} 
We may assume that $G$ has trivial center. 
\end{lem}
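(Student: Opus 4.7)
Set $Z = Z(G)$ and $\bar G = G/Z$; by hypothesis $Z$ is finite and $\bar G$ is a connected semisimple Lie group with trivial center, no nontrivial compact factor, and all simple factors of real rank at least two. Let $\pi : G \to \bar G$ be the quotient, set $\bar K = \pi(K)$ and $\bar P = \pi(P)$ (a maximal compact and a minimal parabolic of $\bar G$, with $\bar G = \bar K \bar P$), and let $\bar \mu = \pi_\ast \mu \in \Prob(\bar G)$, which is $\bar K$-invariant and admissible. The plan is to deduce the conclusion for the ergodic $(G, \mu)$-von Neumann algebra $(\mathcal M, \varphi)$ from the analogous statement applied to a suitable ergodic $(\bar G, \bar \mu)$-von Neumann algebra.

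The first observation is that $\varphi$ is automatically $K$-invariant, hence $Z$-invariant. Indeed, from the left $K$-invariance $\delta_k \ast \mu = \mu$ and the stationarity $\mu \ast \varphi = \varphi$, one gets $k\varphi = \delta_k \ast (\mu \ast \varphi) = (\delta_k \ast \mu) \ast \varphi = \varphi$ for every $k \in K$. Consequently $\varphi = \varphi \circ E_Z$, where $E_Z : \mathcal M \to \mathcal M^Z$ is the normal $G$-equivariant conditional expectation $x \mapsto \frac{1}{|Z|}\sum_{z \in Z}\sigma_z(x)$ (equivariance uses centrality of $Z$). Set $\mathcal N = \mathcal M^Z$: it is globally $G$-invariant, the $G$-action on $\mathcal N$ factors through $\bar G$, the restricted state $\varphi|_{\mathcal N}$ is $\bar \mu$-stationary, and $\mathcal N$ has separable predual. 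Ergodicity transfers from $\mathcal N^{\bar G} = \mathcal M^G = \C 1$, so $(\mathcal N, \varphi|_{\mathcal N})$ is an ergodic $(\bar G, \bar \mu)$-von Neumann algebra.

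Granting the theorem for the trivial-center group $\bar G$ and applying it to $(\mathcal N, \varphi|_{\mathcal N})$, I will split along the dichotomy. If $\varphi|_{\mathcal N}$ is $\bar G$-invariant, then it is $G$-invariant, and composing with $E_Z$ yields that $\varphi$ itself is $G$-invariant on $\mathcal M$. Otherwise, there exist a proper parabolic $\bar P \subset \bar Q \subsetneq \bar G$ and a $\bar G$-equivariant state-preserving normal unital $\ast$-embedding $\rL^\infty(\bar G / \bar Q, \nu_{\bar Q}) \hookrightarrow \mathcal N$. Setting $Q = \pi^{-1}(\bar Q)$ gives a proper parabolic of $G$ with $P \subset Q$; the natural identification $G/Q \cong \bar G / \bar Q$ is $G$-equivariant and matches $\nu_Q$ with $\nu_{\bar Q}$, so composing with the inclusion $\mathcal N \subset \mathcal M$ produces the desired embedding $\Theta : \rL^\infty(G/Q, \nu_Q) \to \mathcal M$ with $\varphi \circ \Theta = \nu_Q$. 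No serious obstacle arises in this reduction; the only mildly subtle point is extracting $Z$-invariance of $\varphi$ from the $K$-invariance of $\mu$, which is precisely what makes the fixed-point algebra $\mathcal M^Z$ retain enough information to detect $G$-invariance of $\varphi$.
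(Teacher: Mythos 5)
Your proof is correct and follows essentially the same route as the paper: pass to the fixed-point algebra $\mathcal M^{Z}$, use the averaging conditional expectation $E_Z$ together with $Z$-invariance of $\varphi$ to see that $G$-invariance can be detected there, and pull back the parabolic $\bar Q$ to $Q=\pi^{-1}(\bar Q)$. The only (harmless) variation is how you get $Z$-invariance of $\varphi$: you derive full $K$-invariance directly from $\delta_k\ast\mu=\mu$ and $\mu\ast\varphi=\varphi$ using $Z\subset K$, whereas the paper uses $Z\subset P$ and the Furstenberg decomposition $\varphi|_{\mathcal A}=\int_{G/P}\psi\circ\sigma_g^{-1}\,\mathrm{d}\nu_P(gP)$ with $\psi$ the $P$-invariant state; both arguments are valid.
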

\begin{proof} Denote by $Z = Z(G)$ the center of $G$, which is finite, and set $G_0 = G/Z$, which is a connected semisimple Lie group satisfying the same assumptions as $G$ and which moreover has trivial center. Observe that $Z < K$ and $Z < P$. Set $K_0 = K/Z$ and $P_0 = P/Z$. Denote by $\mu' \in \Prob(G_0)$ the push-forward measure of $\mu$ under the quotient map $G \to G_0$. Then $\mu'$ is $K_0$-invariant and admissible. Denote by $\mathcal M_0 = \mathcal M^Z$ the fixed-point von Neumann algebra under the action of $Z$ and set $\varphi_0 = \varphi|_{\mathcal M_0}$. Then $(\mathcal M_0, \varphi_0)$ is an ergodic $(G_0, \mu')$-von Neumann algebra. Assume that Theorem \ref{thm:NZ} holds for this action.
Before deducing the theorem for $\cM$, let us record two facts.

{\bf Fact 1.} $\varphi$ is $Z$-invariant on $\cM$. 

Since $\varphi$ is normal, we only need to check that it is $Z$-invariant on the dense subalgebra $\cA$.
Since $Z \subset P$, $\psi$ is $Z$-invariant and we find, for every $z \in Z$, $a \in \mathcal A$,
\[(\varphi \circ \sigma_z)(a) = \int_{G/P} (\psi\circ \sigma_g^{-1})(\sigma_z(a)) \, {\rm d}\nu_P(gP)=  \int_{G/P} (\psi\circ \sigma_z)(\sigma_g^{-1}(a))  \, {\rm d}\nu_P(gP) = \varphi(a).\]

{\bf Fact 2.} $\varphi_0$ is $G_0$-invariant if and only if $\varphi$ is $G$-invariant.

The if direction is trivial.  To prove the only if direction, we define a map $\rE: \cM \to \cM_0$, by the formula $\rE(x) = \frac{1}{\vert Z \vert} \sum_{z \in Z} \sigma_z(x)$, for all $x \in \cM$.
This map is $G$-equivariant in the sense that $\rE \circ \sigma_g = \sigma_g \circ \rE$, for all $g \in G$. Moreover, Fact 1 implies that $\varphi \circ \rE = \varphi$. Assume now that $\varphi_0$ is $G_0$-invariant, i.e.\ that $\varphi$ is $G$-invariant on $\cM_0$. Then for all $x \in \cM$, $g \in G$, we have
\[\varphi(\sigma_g(x)) = \varphi \circ \rE(\sigma_g(x)) = \varphi(\sigma_g(\rE(x))) = \varphi(\rE(x)) = \varphi(x).\]

We can now deduce the conclusion for $\cM$. Assume that $\varphi$ is not $G$-invariant. Then, $\varphi_0$ is not $G_0$-invariant, and there exist a proper parabolic subgroup $P_0 \subset Q_0 \subsetneq G_0$ and a $G_0$-equivariant normal unital $\ast$-embedding $\Theta_0 : \rL^\infty(G_0/Q_0, \nu_{Q_0}) \to \mathcal M_0$ such that $\varphi_0 \circ \Theta_0 = \nu_{Q_0}$. Letting $Q = \pi^{-1}(Q_0)$, we have an identification $\rL^\infty(G/Q, \nu_Q) \simeq \rL^\infty(G_0/Q_0, \nu_{Q_0})$ as $G$-von Neumann algebras. We may thus compose $\Theta_0$ with this identification and get a $G$-equivariant normal unital $\ast$-embedding $\Theta : \rL^\infty(G/Q, \nu_Q) \to \mathcal M_0 \subset \mathcal M$ such that $\varphi \circ \Theta = \nu_{Q}$. 
\end{proof}

Let $(\pi_\psi, H_\psi, \xi_\psi)$ be the GNS triple associated with $(\mathcal A, \psi)$ and set $\mathcal N = \pi_\psi(\mathcal A)\dpr$. We also denote by $\psi$ the normal state $\langle \, \cdot \, \xi_\psi, \xi_\psi\rangle$ on $\mathcal N$. By definition, we have $\psi(\pi_\psi(a)) = \langle \pi_\psi(a)\xi_\psi, \xi_\psi\rangle= \psi(a)$ for every $a \in \mathcal A$. Since the action $P \curvearrowright \mathcal A$ is $\psi$-preserving, it extends to a continuous action $\sigma^{\mathcal N} : P \curvearrowright \mathcal N$ such that  $\sigma_g^{\mathcal N}(\pi_\psi(a)) = \pi_\psi(\sigma_g(a))$, for all $g \in P$, $a \in \mathcal A$ (see \cite[Exercice I.10.7]{Ta02}). Denote by $q \in \mathcal N$ the support projection of $\psi \in \mathcal N_\ast$. Recall that $q$ is the orthogonal projection of $H_\psi$ onto the closure of $\mathcal N'\xi_\psi$.  Since $\sigma^{\mathcal N} : P \curvearrowright \mathcal N$ is $\psi$-preserving, we have $q \in \mathcal N^P$. We point out that the action $\sigma^{\mathcal N} : P \curvearrowright \mathcal N$ need not be ergodic and $q$ need not be equal to~$1$. 

Our first task will be to embed $G$-equivariantly $\cM$ inside the induced von Neumann algebra of the action $P \actson \cN$. Before doing so, let us give some concrete facts on this action, as in \cite[Section 7]{NZ00}. We will use the notation from Section \ref{subsection:G/P}. Using \cite[Lemma IV.2.2]{Ma91}, the product map $\overline V \times P \to G : (\overline v, p) \mapsto \overline vp$ is a homeomorphism onto its image $ \overline V  P$ which is open and conull in $G$. 
As explained in \cite[IV.2.6]{Ma91}, the restriction of the quotient map $G \to G/P : g \mapsto gP$ to $\overline V$ gives a measure space isomorphism $(\overline V,\nu_{\overline V}) \to (G/P,\nu_P) : \overline v \mapsto \overline v P$, whose inverse is denoted by $\tau$. Observe that the Borel probability measure $\nu_{\overline V} \in \Prob(\overline V)$ is in the same class as the Haar measure $m_{\overline V}$. Modifying $\tau $ on a set of measure $0$, we can ensure that $\tau : G/P \to G$ is a measurable section to the quotient map $G \to G/P$ such that $\tau(\overline v P) = \overline v$, for all $\overline v \in \overline V$. As we saw in Subsection \ref{subsection:vN}, we may identify the induced action $G \actson \Ind_P^G(\mathcal N)$ with the continuous $G$-action $\tsigma: G \curvearrowright  \rL^\infty(G/P) \ovt \mathcal N$ given by the formula
\[\tsigma_g(F)(w) = \sigma^\cN_{c_\tau(g, g^{-1} w)}(F(g^{-1}w)), \text{ for all } F \in \rL^\infty(G/P) \ovt \mathcal N, g \in G, w \in G/P,\]
where $c_\tau : G \times G/P \to P$ is the measurable $1$-cocycle associated with $\tau$. Furthermore, using our measure space identification, we have a von Neumann algebra isomorphism
\[\rL^\infty(G/P) \ovt \mathcal N \cong \rL^\infty(\overline V) \ovt \cN.\]
The induced $G$-action may then be transported to a $G$-action on the right hand side von Neumann algebra. We can moreover obtain a very concrete formula for the action of $\overline P$. Indeed, observe that
\[g\overline v P =  \begin{cases} (g\overline{v})P \in \overline V P & \text{ for all } g \in \overline{V}, \overline{v} \in \overline V\\ (g \overline vg^{-1})P \in \overline V P & \text{ for all } g \in S, \overline{v} \in \overline V.\end{cases}\]
This computation was used by Margulis in his proof of the normal subgroup theorem. It implies formulae for the cocycle $c_\tau$: for every $\overline{v} \in \overline V$, $c_\tau(g, \overline v) = e$ if $g \in \overline V$ and $c_\tau(g, \overline v) = g$ if $g \in S$. Consequently,
\[ (\tsigma_g(F))(\bar v) = \begin{cases} F(g^{-1}\overline v) & \text{ if } g \in \overline V\\  \sigma^\cN_g (F(g^{-1} \overline vg)) & \text{ if } g \in S\end{cases},
\text{ for all } F \in \rL^\infty(\overline V) \ovt \cN, \overline v \in \overline V.\]

For any subset of simple roots $\theta \subset \Delta$, the semi-direct product decomposition $\overline V = \overline V_\theta \rtimes \overline U_\theta$ gives a von Neumann algebra isomorphism $\rL^\infty(\overline V) = \rL^\infty(\overline V_\theta \rtimes \overline U_\theta) = \rL^\infty(\overline V_\theta) \ovt \rL^\infty(\overline U_\theta)$. Recall that $S_\theta'$ commutes with $\overline U_\theta$. Therefore, the continuous actions $S'_\theta \curvearrowright \rL^\infty(\overline V) \ovt \mathcal N$ and $\oV_\theta \actson \rL^\infty(\overline V) \ovt \mathcal N$ are given by 
\begin{equation}\label{special action}
\sigma_s(F)(\overline v_\theta, \overline u_\theta) = \sigma_s^{\mathcal N}(F(s^{-1}\overline v_\theta s, \overline u_\theta)) \qquad  \text{ and } \qquad \sigma_g(F)(\overline v_\theta, \overline u_\theta) = F(g^{-1}\overline v_\theta, \overline u_\theta),
\end{equation}
for all $F \in \rL^\infty(\overline V_\theta \rtimes \overline U_\theta) \ovt \cN$, $s \in S_\theta'$, $g \in \oV_\theta$, $\ov_\theta \in \overline V_\theta$, $\ou_\theta \in \overline U_\theta$.

We will freely use this isomorphism $\rL^\infty(G/P) \ovt \mathcal N \cong \rL^\infty(\overline V) \ovt \cN$ to switch between these two points of view on the induced action, depending whether we want to emphasize the general $G$-action or the explicit formulae above. Given the $P$-invariant state $\psi$ on $\cN$, we may consider the associated canonical state $\nu_P \otimes \psi$ on $\rL^\infty(G/P) \ovt \cN$.

Consider the map $\iota : \mathcal A \to \rL^\infty(G/P) \ovt \cN$ defined by the formula $\iota(a)(w) = \pi_{\psi}(\sigma_{\tau(w)}^{-1}(a))$, for all $a \in \mathcal A$, $w \in G/P$. 
Under the identification $\rL^\infty(G/P) \ovt \mathcal N = \rL^\infty(\overline V) \ovt \mathcal N$, the mapping $\iota : \mathcal A \to \rL^\infty(\overline V) \ovt \mathcal N$ is given by the formula $\iota(a)(\overline v) = \pi_{\psi}(\sigma_{\overline v}^{-1}(a))$, for all $a \in \mathcal A$, $\overline v \in \overline V$.

\begin{lem}\label{lem:iota}
The map $\iota$ extends to a well-defined $G$-equivariant normal unital $\ast$-embedding $\iota : \mathcal M \to \rL^\infty(G/P) \ovt \cN$ such that $(\nu_P \otimes \psi) \circ \iota = \varphi$. 
\end{lem}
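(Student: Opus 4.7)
The proof naturally divides into three parts: checking that $\iota$ is a state-preserving unital $\ast$-homomorphism on $\mathcal A$, extending it normally to $\mathcal M$, and verifying $G$-equivariance.

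For the first part, I would observe that for each fixed $w \in G/P$ the assignment $a \mapsto \pi_\psi(\sigma_{\tau(w)}^{-1}(a))$ is the composition of a $\ast$-automorphism of $\mathcal A$ with the $\ast$-homomorphism $\pi_\psi$, hence a unital $\ast$-homomorphism $\mathcal A \to \mathcal N$. Measurability of $\tau$ together with norm continuity of $\sigma$ on $\mathcal A$ and of $\pi_\psi$ ensures that $w \mapsto \pi_\psi(\sigma_{\tau(w)}^{-1}(a))$ is bounded by $\|a\|$ and measurable, hence defines an element of $\rL^\infty(G/P, \mathcal N) = \rL^\infty(G/P) \ovt \mathcal N$. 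The identity $(\nu_P \otimes \psi) \circ \iota = \varphi|_{\mathcal A}$ follows from a direct calculation using Corollary \ref{exmp:C*2} together with $\psi(\pi_\psi(a)) = \psi(a)$.

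The heart of the proof is the normal extension to $\mathcal M$. Since $\varphi$ is faithful on $\mathcal M$ (Lemma \ref{lem:support}), realize $\mathcal M$ on $H_\varphi$ with cyclic separating vector $\xi_\varphi$. The vector $\tilde\xi := 1 \otimes \xi_\psi \in L^2(G/P, \nu_P) \otimes H_\psi$ implements $\rho := \nu_P \otimes \psi$ on $\mathcal P := \rL^\infty(G/P) \ovt \mathcal N$ and is cyclic for $\mathcal P$ since $\xi_\psi$ is cyclic for $\mathcal N$. State preservation gives $\|\iota(a)\tilde\xi\|^2 = \varphi(a^*a) = \|a\xi_\varphi\|^2$, so the prescription $a\xi_\varphi \mapsto \iota(a)\tilde\xi$ extends to an isometry $V : H_\varphi \to L^2(G/P) \otimes H_\psi$ intertwining $\mathcal A$ with $\iota(\mathcal A)$, i.e.\ $Va = \iota(a)V$ on $\mathcal A$. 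Setting $p = VV^*$ and $\mathcal B := \iota(\mathcal A)\dpr$, one deduces $p \in \iota(\mathcal A)' \supset \mathcal B'$. The compression $\Psi : \mathcal B \to B(H_\varphi)$, $x \mapsto V^*xV$, is a normal $\ast$-homomorphism (multiplicativity via $p \in \mathcal B'$) sending $\iota(a)$ to $a$; by normality and ultraweak density, $\Psi(\mathcal B) = \mathcal M$. The decisive step is injectivity: if $V^*xV = 0$ for $x \in \mathcal B$, then since $\mathcal B$ preserves $K := \Img(V)$, the vector $xV\eta$ lies in $K \cap K^\perp = 0$, so $xV = 0$ and hence $x \cdot \iota(a)\tilde\xi = 0$ for every $a \in \mathcal A$. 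Evaluating pointwise in $w$ gives $x(w)\pi_\psi(\sigma_{\tau(w)}^{-1}(a))\xi_\psi = 0$ almost everywhere; using separability of $\mathcal A$ to choose a common conull set, $G$-invariance of $\mathcal A$ to let $\sigma_{\tau(w)}^{-1}(a)$ range over a dense subset of $\mathcal A$, and cyclicity of $\xi_\psi$ for $\pi_\psi(\mathcal A)$, one concludes $x(w) = 0$ almost everywhere. Hence $\Psi$ is a normal $\ast$-isomorphism $\mathcal B \cong \mathcal M$ whose inverse is the desired embedding $\iota : \mathcal M \hookrightarrow \mathcal B \subset \rL^\infty(G/P) \ovt \mathcal N$; it automatically extends the original $\iota$ on $\mathcal A$ and satisfies $(\nu_P \otimes \psi) \circ \iota = \varphi$ by normality.

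For $G$-equivariance, a direct calculation on $\mathcal A$ using the cocycle identity $c_\tau(g, g^{-1}w) = \tau(w)^{-1} g \tau(g^{-1}w)$ together with $\sigma^\mathcal N_h \circ \pi_\psi = \pi_\psi \circ \sigma_h$ for $h \in P$ yields
\[(\tsigma_g \iota(a))(w) = \sigma^\mathcal N_{c_\tau(g, g^{-1}w)}\!\bigl(\pi_\psi(\sigma_{\tau(g^{-1}w)}^{-1}(a))\bigr) = \pi_\psi(\sigma_{\tau(w)^{-1} g}(a)) = \iota(\sigma_g(a))(w);\]
equivariance then lifts to $\mathcal M$ by uniqueness of the normal extension (any two normal extensions of $\iota$ coincide on the ultraweakly dense subalgebra $\mathcal A$, hence everywhere). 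The main obstacle is the injectivity of $\Psi$, which is delicate because $\rho$ need not be faithful on $\mathcal P$ (the support projection $q$ of $\psi$ on $\mathcal N$ may be strictly less than $1$); the pointwise vanishing argument exploiting the cyclicity of $\xi_\psi$ for $\mathcal N$ is what resolves this cleanly.
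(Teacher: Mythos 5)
Your proof is correct and follows essentially the same route as the paper: both arguments realize the extension via the vector $1_{G/P}\otimes\xi_\psi$ in $\rL^2(G/P,\nu_P)\otimes H_\psi$, identify the compression to the cyclic subspace $\overline{\iota(\mathcal A)\xi}$ with $\pi_\varphi=\id_{\mathcal M}$ (your isometry $V$ and $p=VV^*$ being the same data as the paper's projection $p$), and establish injectivity of the compression by the identical disintegration argument — pointwise vanishing, separability of $\mathcal A$ to exchange quantifiers, and cyclicity of $\xi_\psi$ for $\pi_\psi(\mathcal A)$. The equivariance computation via the cocycle identity also matches the paper's.
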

\begin{proof}
The proof of the claim is analogous to the one of Lemma \ref{GNS identification}. First, observe that for every $a \in \mathcal A$, $g \in G$ and $w \in G/P$, we have 
\begin{align*}
\iota(\sigma_g(a))(w) & = \pi_\psi(\sigma_{\tau(w)}^{-1}(\sigma_g(a))) \\
& = \pi_\psi(\sigma_{\tau(w)^{-1}g\tau(g^{-1}w)\tau(g^{-1}w)^{-1}}(a))\\
& = \sigma^\cN_{c_\tau(g,g^{-1}w)}\pi_\psi(\sigma_{\tau(g^{-1}w)^{-1}}(a)) = \tsigma_g(\iota(a))(w).
\end{align*}
Therefore $\iota(\sigma_g(a)) = \tsigma_g(\iota(a))$. Since $\psi \in \mathcal N_\ast$ is $P$-invariant, the bounded continuous map $G/P \to \C : gP \to \psi(\sigma_g^{-1}(a))$ is well-defined and we have
\[(\nu_{P} \otimes \psi)(\iota(a)) = \int_{G/P} \psi(\sigma_{\tau(w)}^{-1}(a) ) \, {\rm d}\nu_{P}(w) =\int_{G/P} \psi(\sigma_{g}^{-1}(a) ) \, {\rm d}\nu_{P}(gP) = \varphi(a), \text{ for all } a \in \cA.\]
Thus, once we proved that $\iota : \mathcal M \to \Ind_P^G(\mathcal N)$ extends to a normal  unital $\ast$-embedding, we will necessarily have that $\iota$ is $G$-equivariant and $(\nu_P \otimes \psi) \circ \iota = \varphi$.

Set $H = \rL^2(G/P, \nu_P) \otimes H_\psi$ and $\xi = 1_{G/P} \otimes \xi_\psi \in H$. Denote by $p \in \iota(\mathcal A)' \cap \mathbf B(H)$ the orthogonal projection onto the closed subspace $H_0 = \overline{\iota(\mathcal A)\xi}$. We identify $\mathbf B(H_0) = p\mathbf B(H)p$. Observe that $\xi$ is a $\iota(\mathcal A)$-cyclic vector in $H_0$ which implements the state $\varphi$ on $\mathcal A$. Thus, by uniqueness of the GNS representation, the unitary representation $\mathcal A \to \mathbf B(H_0) : a \mapsto \iota(a)p$ is unitarily conjugate to $\pi_\varphi = \id$. In particular, it indeed extends to a normal unital $\ast$-isomorphism $\mathcal M \to \iota(\mathcal A)\dpr p : a \mapsto \iota(a)p$. We are left to check that the normal unital $\ast$-homomorphism $\iota(\mathcal A)\dpr \to \iota(\mathcal A)\dpr p: f \mapsto fp$ is injective. Let $f \in \iota(\mathcal A)\dpr$ such that $fp = 0$. For every $a \in \mathcal A$, we have $f \iota(a)\xi = 0$. Regarding $f \in \rL^\infty(G/P, \mathcal N)$, for every $a \in \mathcal A$ and almost every $w \in G/P$, we have $f(w) \pi_\psi(\sigma_{\tau(w)}^{-1}(a))\xi_\psi = 0$. Since $\mathcal A$ is separable, this implies that for  almost every $w \in G/P$ and every $a \in \mathcal A$, we have $f(w) \pi_\psi(\sigma_{\tau(w)}^{-1}(a))\xi_\psi = 0$. Since $\xi_\psi$ is $\pi_\psi(\mathcal A)$-cyclic, we conclude that $f(w) = 0$ for almost every $w \in G/P$. This finally shows that $f = 0$.
\end{proof}

Lemma \ref{lem:iota} allows us to regard $\mathcal M \subset \rL^\infty(G/P) \ovt \cN$ as a $\widetilde \sigma(G)$-invariant von Neumann subalgebra on which the action $\sigma : G \curvearrowright \mathcal M$ coincides with the action $\widetilde \sigma : G \curvearrowright \rL^\infty(G/P) \ovt \cN$. From now on, we will equally use the letter $\sigma$ to denote any of these actions.

In Lemmas \ref{lem:NZ1} and \ref{lem:NZ2}, we fix $\theta \subset \Delta$ such that $\theta \neq \Delta$ and $s \in S_\theta'$. Lemma \ref{lem:NZ1} is a generalization of \cite[Proposition 7.1]{NZ00}. It will allow us to reduce to the case where  some elements of $P$ act trivially on $\cN$. Denote by $m_{\overline U_\theta}$ a Haar measure on $\overline U_\theta$.

\begin{lem}\label{lem:NZ1}
Let $a \in \mathcal A \subset \mathcal M$. For every $n \in \N$, define $a_n = \frac{1}{n + 1} \sum_{k = 0}^n \sigma_{s}^k(a) \in \mathcal A$. Then the sequence $(a_n)_{n \in \N}$ converges strongly in $\mathcal M$. Set $a_\infty = \lim_n a_n $ to be its strong limit in $\mathcal M$. The following assertions hold:
\begin{itemize}
\item We have $\iota(a_\infty) \in \C 1_{\overline V_\theta} \ovt \rL^\infty(\overline U_\theta) \ovt \mathcal N^s$. We may regard $\iota(a_\infty) \in \rL^\infty(\overline U_\theta, \mathcal N^s)$.
\item There exists a subsequence $(n_k)_{k \in \N}$ such that for $m_{\overline U_\theta}$-almost every $\overline u_\theta \in \overline U_\theta$, the sequence $(\iota(a_{n_k})(\overline e, \overline u_\theta))_{k \in \N}$ converges strongly to $\iota(a_\infty)(\overline u_\theta)$ in $\mathcal N$.
\end{itemize}
\end{lem}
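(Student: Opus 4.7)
The core strategy is to exploit the contracting dynamics of $\Int(s)^{-1}$ on $\oV_\theta$ combined with a mean ergodic theorem for the $\psi$-preserving $*$-automorphism $\sigma_s^\cN$ of $\cN$. Note that $s\in S_\theta'\subset S_\theta=Z(R_\theta)\subset Z_G(S)=R_\emptyset\subset P$, so $\psi$ is indeed $\sigma_s^\cN$-invariant.

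By Kaplansky's density theorem together with the uniform bound $\|a_n\|\leq\|a\|$, I would first reduce to $a\in\cA$, so that $\iota(a)\colon\oV\to\cN$, $\ov\mapsto\pi_\psi(\sigma_{\ov^{-1}}(a))$, is norm continuous. Applying formula \eqref{special action},
\[
\sigma_s^k(\iota(a))(\ov_\theta,\ou_\theta) = \sigma_s^{k,\cN}\bigl(\iota(a)(s^{-k}\ov_\theta s^k,\ou_\theta)\bigr),
\]
and since $s^{-k}\ov_\theta s^k\to\overline e$, norm continuity yields $\|\iota(a)(s^{-k}\ov_\theta s^k,\ou_\theta)-b(\ou_\theta)\|\to0$ with $b(\ou_\theta):=\pi_\psi(\sigma_{\ou_\theta^{-1}}(a))$. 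Using that $\sigma_s^{k,\cN}$ is an isometry and that Cesaro averages of null sequences are null, I would derive the pointwise estimate
\[
\|\iota(a_n)(\ov_\theta,\ou_\theta)-c_n(\ou_\theta)\|\to 0, \qquad c_n(\ou_\theta):=\tfrac{1}{n+1}\textstyle\sum_{k=0}^n \sigma_s^{k,\cN}(b(\ou_\theta)).
\]

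Next, the von Neumann mean ergodic theorem applied to the unitary implementing $\sigma_s^\cN$ on $H_\psi$ yields, for each $\ou_\theta$, convergence of $c_n(\ou_\theta)\xi_\psi$ in $H_\psi$. Combined with the bound $\|c_n(\ou_\theta)\|\leq\|a\|$ and the fact that $\xi_\psi$ is separating for $\cN$, this upgrades to strong convergence $c_n(\ou_\theta)\to y(\ou_\theta)\in\cN^s$ in $\cN$ (subsequential weak$^*$-limits in $\cN$ are forced to coincide with $y(\ou_\theta)\xi_\psi$ on the separating vector). By the previous step, $\iota(a_n)(\ov_\theta,\ou_\theta)\to y(\ou_\theta)$ in $\|\cdot\|_{\psi,2}$ for every $(\ov_\theta,\ou_\theta)$, with the limit independent of $\ov_\theta$. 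Lebesgue's dominated convergence theorem (integrands bounded by $2\|a\|$) then gives
\[
\|\iota(a_n)-Y\|_{\nu_P\otimes\psi,2}\to 0, \qquad Y\in\C 1_{\oV_\theta}\ovt\rL^\infty(\oU_\theta)\ovt\cN^s,
\]
and cyclicity of $\xi_{\nu_{\oV}}\otimes\xi_\psi$ together with uniform boundedness upgrades this to SOT-convergence on $\rL^\infty(\oV)\ovt\cN$. Since $\iota(\cM)$ is SOT-closed on bounded sets, I conclude that $(a_n)$ converges strongly in $\cM$ to some $a_\infty$ with $\iota(a_\infty)=Y$, establishing the first bullet.

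For the second bullet I would observe that at $\ov_\theta=\overline e$ the approximation is exact: $\iota(a_n)(\overline e,\ou_\theta)=c_n(\ou_\theta)$ for all $n$ and $\ou_\theta$. Extracting a subsequence $(n_k)$ along which $\iota(a_{n_k})\to Y$ pointwise $\nu_{\oV}$-almost everywhere in $\|\cdot\|_{\psi,2}$ and using Fubini (with $\nu_{\oV}$ equivalent to $m_{\oV_\theta}\otimes m_{\oU_\theta}$), one identifies $Y(\ou_\theta)=y(\ou_\theta)$ for $m_{\oU_\theta}$-a.e.\ $\ou_\theta$; consequently $\iota(a_{n_k})(\overline e,\ou_\theta)=c_{n_k}(\ou_\theta)\to y(\ou_\theta)=\iota(a_\infty)(\ou_\theta)$ strongly in $\cN$ for a.e.\ $\ou_\theta$. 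The main obstacle I anticipate is this final identification: the mean ergodic theorem lives on the null slice $\{\overline e\}\times\oU_\theta$ while the global SOT-convergence of $(a_n)$ in $\cM$ only sees $\overline v$ up to $\nu_{\oV}$-null sets, so the subsequence extraction and Fubini argument must be set up carefully to reconcile the two.
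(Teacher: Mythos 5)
Your overall skeleton (the contraction estimate $s^{-k}\ov_\theta s^k \to \overline e$ reducing everything to the slice $\{\overline e\}\times \oU_\theta$, the von Neumann ergodic theorem for the Koopman unitary $\kappa(s)$ on $H_\psi$, the exactness $\iota(a_n)(\overline e,\ou_\theta)=c_n(\ou_\theta)$, and the final subsequence/Fubini extraction) is the same as the paper's. But there is a genuine gap at the pivotal step: you assert that $\xi_\psi$ is separating for $\cN$ in order to upgrade the Hilbert-space convergence $c_n(\ou_\theta)\xi_\psi \to P_{\kappa(s)}(b(\ou_\theta)\xi_\psi)$ to strong operator convergence $c_n(\ou_\theta)\to y(\ou_\theta)$ in $\cN$. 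This is false in general: $\xi_\psi$ is cyclic for $\pi_\psi(\cA)$, hence for $\cN$, but it is separating only if the $P$-invariant state $\psi$ is faithful on $\cN$, and the paper explicitly points out that the support projection $q=\supp(\psi)$ need not equal $1$ (this is precisely the ``extra technicality'' of the noncommutative setting, and the reason Lemma \ref{lem:NZ2} and the corner $q_{\theta,s}$ appear later). Without a separating vector, bounded subsequential weak$^*$ limits $y$ of $(c_n(\ou_\theta))_n$ satisfying $y\xi_\psi=\eta$ are not unique and need not be SOT limits, so your pointwise field $y(\ou_\theta)$, and hence your element $Y$, is not well defined. The same issue undercuts your claim that SOT-convergence on $\rL^\infty(\oV)\ovt\cN$ follows from $\Vert\cdot\Vert_{\nu_{\oV}\otimes\psi,2}$-convergence plus cyclicity: that upgrade requires the vector to be cyclic for the \emph{commutant}, i.e.\ the state to be faithful, which $\nu_{\oV}\otimes\psi$ is not on all of $\rL^\infty(\oV)\ovt\cN$.

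The fix, which is how the paper proceeds, is to never take pointwise strong limits in $\cN$. One only records that $(\iota(a_n)(\ov_\theta,\ou_\theta))_n$ is $\Vert\cdot\Vert_\psi$-Cauchy for every point, integrates with dominated convergence to conclude that $(a_n)_n$ is $\Vert\cdot\Vert_\varphi$-Cauchy in $\cM$, and then uses that $\varphi$ \emph{is} faithful on $\cM$ (Lemma \ref{lem:support}) to obtain a strong limit $a_\infty$ there. The membership $\iota(a_\infty)\in\C 1_{\oV_\theta}\ovt\rL^\infty(\oU_\theta)\ovt\cN^s$ is then read off from the $\oV_\theta$- and $s$-invariance of $a_\infty$ (again checked in $\Vert\cdot\Vert_\varphi$), and the second bullet is recovered a posteriori: strong convergence $\iota(a_{n_k})\to\iota(a_\infty)$ gives pointwise a.e.\ strong convergence along a subsequence, and your (correct) uniform-norm estimate $\Vert\iota(a_n)(\ov_\theta,\ou_\theta)-\iota(a_n)(\overline e,\ou_\theta)\Vert\to 0$ transfers it to the slice $\overline e$. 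So the order of quantifiers must be reversed: the limit is produced globally in $\cM$ first, and only then identified on almost every fiber, rather than assembled fiberwise.
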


\begin{proof}
The proof follows the same strategy as the one of \cite[Proposition 7.1]{NZ00}. However, it involves extra technicalities because the normal state $\psi \in \mathcal N_\ast$ need not be faithful. We start by proving that the sequence $(a_n)_{n \in \N}$ converges strongly in $\mathcal M$. Since $(a_n)_{n \in \N}$ is uniformly bounded and $\varphi \in \mathcal M_\ast$ is faithful, it suffices to show that the sequence $(a_n)_{n \in \N}$ is $\|\cdot\|_\varphi$-Cauchy. Let $(\overline v_\theta, \overline u_\theta) \in \overline V = \overline V_\theta \rtimes \overline U_\theta$. Let $\varepsilon > 0$. Since $a \in \mathcal A$, since the action $G \curvearrowright \mathcal A$ is norm continuous and since $s^{-k} \overline v_\theta s^k \to e$ in $\overline V$ as $k \to \infty$, there exists $k_0 = k_0(\overline v_\theta, \overline u_\theta) \in \N$ such that for every $k \geq k_0$, we have $\|\sigma_{s^{-k}\overline v_\theta^{-1} s^k}(a) - a\| \leq \varepsilon$. For all $k \geq k_0$, we have
\begin{align*}
\|\iota(\sigma_{s}^k(a))(\overline v_\theta, \overline u_\theta) - \iota(\sigma_{s}^k(a))(\bar e,\overline u_\theta)\| &= \|\pi_\psi(\sigma_{\overline u_\theta}^{-1}(\sigma_{\overline v_\theta}^{-1} (\sigma_s^k(a)))) - \pi_\psi(\sigma_{\overline u_\theta}^{-1} (\sigma_s^k(a)))\| \\
& \leq \|\sigma_{s^{-k}\overline v_\theta^{-1} s^k}(a) - a\|\\
& \leq \eps.
\end{align*}
Since $\eps > 0$ was arbitrary, we deduce that
\begin{equation}\label{eq:norm}
\lim_n \|\iota(a_n)(\overline v_\theta, \overline u_\theta) - \iota(a_n)(\overline e, \overline u_\theta)\| = 0.
\end{equation}
Since the action $\sigma : P \curvearrowright \mathcal N$ is $\psi$-preserving, we may define the strongly continuous Koopman unitary representation $\kappa : P \to \mathcal U(H_\psi)$ by the formula $\kappa(g)( b \xi_\psi) = \sigma_g(b)\xi_\psi$, for all $g \in P$ and $b \in \mathcal N$. Denote by $P_{\kappa(s)} : H_\psi \to H_\psi$ the orthogonal projection onto the closed subspace of $\kappa(s)$-invariant vectors. By von Neumann's ergodic theorem, the sequence $(\frac{1}{n+1} \sum_{k = 0}^n\kappa(s)^k)_{n \in \N}$ converges strongly to $P_{\kappa(s)}$. Since $s$ commutes with $\overline u_\theta$, this implies that $\lim_n \|\iota(a_n)(\overline e, \overline u_\theta) \xi_\psi - P_{\kappa(s)}(\pi_\psi(\sigma_{\overline u_\theta}^{-1}(a)) \xi_\psi)\| = 0$. Combining this with \eqref{eq:norm}, we obtain that 
\begin{equation}\label{pointwise cv}\lim_n \|\iota(a_n)(\overline v_\theta, \overline u_\theta)  \xi_\psi- P_{\kappa(s)}(\pi_\psi(\sigma_{\overline u_\theta}^{-1}(a)) \xi_\psi)\| = 0.\end{equation}
In particular, we obtain that the sequence $(\iota(a_n)(\overline v_\theta, \overline u_\theta))_{n \in \N}$ is $\|\cdot\|_\psi$-Cauchy in $\mathcal N$. Since $(\nu_{\overline V} \otimes \psi) \circ \iota = \varphi$, we have, for all $m, n \in \N$,
\[
\|a_{m} - a_{n}\|_\varphi^2 = \int_{\overline V} \|\iota(a_{m})(\overline v_\theta, \overline u_\theta) - \iota(a_{n})(\overline v_\theta, \overline u_\theta)\|_\psi^2 \, {\rm d}\nu_{\overline V}(\overline v_\theta, \overline u_\theta).
\]
Applying Lebesgue's dominated convergence theorem, we obtain that the sequence $(a_n)_{n \in \N}$ is $\|\cdot\|_\varphi$-Cauchy and thus strongly convergent in $\mathcal M$.

Set $a_\infty = \lim_n a_n \in \mathcal M$ to be the strong limit of the sequence $(a_n)_{n \in \N}$ in $\mathcal M$. We next show that $\iota(a_\infty) \in \C 1_{\oV_\theta} \ovt \rL^\infty(\overline U_\theta) \ovt \mathcal N^s$. To that aim, let us first check that $a_\infty$ is $\oV_\theta$-invariant. Observe that the action of $\oV_\theta$ on $\rL^\infty(\oV) \ovt \cN \cong \rL^\infty(\oV_\theta) \ovt \rL^\infty(\oU_\theta) \ovt \cN$ is just the action on the first tensor factor $\rL^\infty(\oV_\theta)$ by left translation. Taking $\ov \in \oV_\theta$, we have, for all $n \in \N$,
\[\Vert \sigma_\ov(a_n) - a_n \Vert_\varphi = \int_\oV \Vert \iota(a_{n})(\ov^{-1}\overline v_\theta, \overline u_\theta) - \iota(a_{n})(\overline v_\theta, \overline u_\theta)\Vert_\psi^2 \, {\rm d}\nu_{\overline V}(\overline v_\theta, \overline u_\theta).\]
Equation \eqref{pointwise cv} and Lebesgue's dominated convergence theorem imply that this quantity converges to $0$ as $n \to \infty$. Therefore, $a_\infty$ is $\oV_\theta$-invariant, which implies that $\iota(a_\infty) \in \C 1_{\oV_\theta} \ovt \rL^\infty(\overline U_\theta) \ovt \mathcal N$. Moreover, since $\Vert a_n - \sigma_s(a_n)\Vert$ tends to $0$ as $n \to \infty$, it follows that $a_\infty$ is $s$-invariant. In view of the formula for the induced action of $s \in S_\theta'$, this implies that $\iota(a_\infty) \in  \C 1_{\oV_\theta} \ovt \rL^\infty(\overline U_\theta) \ovt \mathcal N^s$. Regard $\iota(a_\infty) \in \rL^\infty(\overline U_\theta, \mathcal N^s)$.

We now check the last statement. Since $\iota(a_n)$ converges strongly to $\iota(a_\infty)$ as $n \to \infty$, there exists a subsequence $(a_{n_k})_{k \in \N}$ such that $\iota(a_{n_k})(\ov_\theta,\ou_\theta)$ converges strongly to $\iota(a_\infty)(\ov_\theta,\ou_\theta) = \iota(a_\infty)(\ou_\theta)$ for almost every $(\ov_\theta,\ou_\theta)\in \oV$. In view of \eqref{eq:norm}, we deduce that  $\iota(a_{n_k})(\overline e,\ou_\theta)$ converges strongly to $\iota(a_\infty)(\ou_\theta)$ for almost every $(\ov_\theta,\ou_\theta)\in \oV$. In particular, Fubini's theorem implies that there exists $\ov_\theta \in \oV_\theta$, such that $\iota(a_{n_k})(\overline e,\ou_\theta)$ converges strongly to $\iota(a_\infty)(\ou_\theta)$ for almost every $\ou_\theta \in \oU_\theta$.
\end{proof}

Set $W_{\theta, s} = s^\Z \ltimes V_\theta \subset P$. It follows from the Levi decomposition $P_\theta = Z_G(S_\theta) \ltimes V_\theta$ that $W_{\theta,s}$ is a normal subgroup of $P_\theta$. Since $W_{\theta, s} \subset P \subset P_\theta$, it follows that $W_{\theta, s}$ is a normal subgroup of $P$.

We have the following inclusions of fixed point von Neumann subalgebras: 
$$\mathcal N^P \subset \mathcal N^{W_{\theta, s}} \subset \mathcal N^s \subset \mathcal N.$$ 
In order to properly use the previous lemma, we would like to show that $\cN^s$ is globally $P$-invariant and study the action of $G$ on $\rL^\infty(G/P) \ovt \cN^s$. In the commutative setting, Mautner's phenomenon implies that $\cN^s = \cN^{W_{\theta,s}}$, which is $P$-invariant (see \cite[Section 4]{NZ00}). This is because in the case when $\cN$ is commutative, the state $\psi$ is faithful on $\cN$, and so Mautner's phenomenon, which is based on purely Hilbert space considerations, can be applied. Unfortunately, in the general noncommutative case, the state $\psi$ need not be faithful on $\cN$, and Mautner's phenomenon fails. The equality $\cN^s = \cN^{W_{\theta,s}}$ does not hold. However, this equality will hold under taking a corner with respect to a  suitable projection. This is the point of the following notation and lemma.

Denote by $q_{\theta, s} = \bigvee_{u \in \mathcal U(\mathcal N^{W_{\theta, s}})} uqu^* \in \mathcal Z( \mathcal N^{W_{\theta, s}})$ the central support in $\mathcal N^{W_{\theta, s}}$ of the projection $q \in \mathcal N^P \subset \mathcal N^{W_{\theta, s}}$. Here, the notation $\bigvee$ designates the join operation in the lattice of projections, i.e.\ the smallest projection dominating each element in the family. Since $W_{\theta, s}$ is a normal subgroup of $P$, for every $g \in P$, we have $\sigma_g (\mathcal N^{W_{\theta, s}}) = \mathcal N^{W_{\theta, s}}$ and so
$$\sigma_g(q_{\theta, s}) = \bigvee_{u \in \mathcal U(\mathcal N^{W_{\theta, s}})} \sigma_g(u) \sigma_g(q) \sigma_g(u^*) = \bigvee_{u \in \mathcal U(\mathcal N^{W_{\theta, s}})} \sigma_g(u) q \sigma_g(u^*) = \bigvee_{u \in \mathcal U(\mathcal N^{W_{\theta, s}})} u q u^* = q_{\theta, s}.$$
Therefore, we have $q \leq q_{\theta, s}$ and $q_{\theta, s} \in \mathcal N^P \cap \mathcal Z(\mathcal N^{W_{\theta, s}})$. Observe that $q_{\theta, s}$ is the orthogonal projection from $H_\psi$ onto the closure of $(\mathcal N^{W_{\theta, s}} \vee \mathcal N')\xi_\psi$. The following lemma follows from Mautner's phenomenon.

\begin{lem}\label{lem:NZ2}
For every $y \in \mathcal N^s$, we have $y q_{\theta, s}  = q_{\theta, s} y$ and $y q_{\theta, s} \in \mathcal N^{W_{\theta, s}}$.
\end{lem}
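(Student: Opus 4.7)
The plan is to exploit the Koopman representation $\kappa : P \to \cU(H_\psi)$ associated with the $\psi$-preserving action $\sigma^{\mathcal N} : P \curvearrowright \mathcal N$, defined by $\kappa(g)(b\xi_\psi) = \sigma_g^{\mathcal N}(b) \xi_\psi$, together with Mautner's phenomenon. The contracting property of $s$ on $V_\theta$ yields $s^{-n} v s^n \to e$ for every $v \in V_\theta$, so any $\kappa(s)$-invariant vector in $H_\psi$ is automatically $\kappa(W_{\theta, s})$-invariant. This is the Hilbert space-level substitute for the equality $\cN^s = \cN^{W_{\theta,s}}$ that fails when $\psi$ is not faithful.

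First I would apply Mautner to the vector $y \xi_\psi$ for $y \in \cN^s$. Since $\sigma_s^{\mathcal N}(y) = y$, we have $\kappa(s)(y\xi_\psi) = y\xi_\psi$, hence $\kappa(w)(y\xi_\psi) = y\xi_\psi$, i.e.\ $\sigma_w^{\mathcal N}(y)\xi_\psi = y\xi_\psi$ for every $w \in W_{\theta, s}$. Because $q$ is the support of $\psi$, this translates into the operator identity
\[(\sigma_w^{\mathcal N}(y) - y)\, q = 0, \qquad \forall y \in \cN^s, \, \forall w \in W_{\theta, s}.\]

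The next step is to upgrade the annihilating projection from $q$ to $q_{\theta, s} = \bigvee_{u \in \cU(\cN^{W_{\theta, s}})} u q u^*$. The crucial observation is that $\cN^s$ is a right $\cN^{W_{\theta, s}}$-module: for any $u \in \cU(\cN^{W_{\theta, s}})$ and $y \in \cN^s$, the product $yu$ still lies in $\cN^s$ since $\sigma_s^{\mathcal N}(yu) = y u$. Applying the previous identity to $yu$ gives $(\sigma_w^{\mathcal N}(yu) - yu)\, q = 0$, and since $\sigma_w^{\mathcal N}(u) = u$, this rewrites as $(\sigma_w^{\mathcal N}(y) - y)\, uq = 0$, hence $(\sigma_w^{\mathcal N}(y) - y)\, u q u^* = 0$. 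Taking the sup over $u$ in the lattice of projections (the right null projection of a fixed operator is stable under such suprema), I get $(\sigma_w^{\mathcal N}(y) - y)\, q_{\theta, s} = 0$. Since $q_{\theta, s}$ is $P$-invariant, this rearranges to $\sigma_w^{\mathcal N}(y q_{\theta, s}) = y q_{\theta, s}$, proving $y q_{\theta, s} \in \cN^{W_{\theta, s}}$.

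Finally, for the commutation $y q_{\theta, s} = q_{\theta, s} y$, I use that $q_{\theta, s}$ lies in the center of $\cN^{W_{\theta, s}}$. Since $y q_{\theta, s} \in \cN^{W_{\theta, s}}$, it commutes with $q_{\theta, s}$, so $y q_{\theta, s} = (y q_{\theta, s}) q_{\theta, s} = q_{\theta, s} y q_{\theta, s}$. Applying the same argument to $y^\ast \in \cN^s$ and taking adjoints gives $q_{\theta, s} y = q_{\theta, s} y q_{\theta, s}$, whence $y q_{\theta, s} = q_{\theta, s} y$.

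The main subtlety I expect is the passage from the Hilbert space identity $\sigma_w^{\mathcal N}(y)\xi_\psi = y\xi_\psi$ to the operator identity $\sigma_w^{\mathcal N}(y) u q u^* = y u q u^*$ for every $u \in \cU(\cN^{W_{\theta, s}})$: the slick trick of replacing $y$ by $yu$ (which requires the stability of $\cN^s$ under right multiplication by $\cN^{W_{\theta, s}}$) is what converts the defect of faithfulness of $\psi$ on $\cN$ into the central support statement for $q$ inside $\cN^{W_{\theta, s}}$. Without this manipulation, one is stuck with an identity only modulo the right ideal $\cN(1-q)$, which is too weak to recover $W_{\theta, s}$-invariance after compressing by $q_{\theta, s}$.
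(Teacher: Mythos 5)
Your proof is correct and follows essentially the same route as the paper: Mautner's phenomenon applied to the Koopman representation to get the vector identity, right multiplication by elements of $\mathcal N^{W_{\theta,s}}$ to upgrade the annihilating projection from $q$ to $q_{\theta,s}$, and the same adjoint trick for the commutation. The only cosmetic difference is that you take the lattice join over $uqu^*$ directly from the definition of the central support, whereas the paper uses the equivalent description of $q_{\theta,s}$ as the projection onto $\overline{(\mathcal N^{W_{\theta,s}} \vee \mathcal N')\xi_\psi}$.
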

\begin{proof}
Let $y \in \mathcal N^s$ and $b \in \mathcal N^{W_{\theta, s}}$. We still denote by $\kappa : P \to \mathcal U(H_\psi)$ the strongly continuous Koopman representation associated with the $\psi$-preserving action $\sigma : P \curvearrowright \mathcal N$. We have $y b \in \mathcal N^s$ and so $\kappa(s)(y b \xi_\psi) = y b \xi_\psi$. By Mautner's phenomenon applied to $\kappa$ (see \cite[Lemma II.3.2]{Ma91}), we have $\kappa(g)(y b \xi_\psi) = y b \xi_\psi$, for every $g \in V_\theta$. Therefore, we have $\kappa(g)(y b \xi_\psi) = y b \xi_\psi$ for every $g \in W_{\theta, s} = s^\Z \ltimes V_\theta$. This implies that for every $g \in W_{\theta, s}$ and every $b \in \mathcal N^{W_{\theta, s}}$, we have 
$$\sigma_g(y) b \xi_\psi = \sigma_g(y b)  \xi_\psi  = \kappa(g)(y b \xi_\psi) = y b \xi_\psi.$$
Hence, for every $g \in W_{\theta, s}$, every $b \in \mathcal N^{W_{\theta, s}}$ and every $c \in \mathcal N'$, we have 
$$\left( \sigma_g(y) -y \right) bc \xi_\psi = 0.$$
This shows that for every $g \in W_{\theta, s}$, $\left( \sigma_g(y) - y\right) q_{\theta, s} = 0$ and so $\sigma_g(yq_{\theta, s}) = \sigma_g(y)q_{\theta, s} = yq_{\theta, s}$. Thus, $y q_{\theta, s} \in \mathcal N^{W_{\theta, s}}$. Since $q_{\theta, s} \in \mathcal Z(\mathcal N^{W_{\theta, s}})$, we have $y q_{\theta, s} =  q_{\theta, s}y q_{\theta, s}$. Applying this equality to $y^* \in \mathcal N^s$ shows that $q_{\theta, s} y = (y^*q_{\theta, s})^* = (q_{\theta, s} y^*q_{\theta, s})^* = q_{\theta,s}yq_{\theta,s} = yq_{\theta,s}$.
\end{proof}

A combination of Lemmas \ref{lem:NZ1} and \ref{lem:NZ2} shows that for every $a \in \mathcal A$, we have that
$$\iota(a_\infty)(1_{G/P} \otimes q_{\theta, s}) = (1_{G/P} \otimes q_{\theta, s}) \iota(a_\infty) \quad \text{and} \quad \iota(a_\infty)(1_{G/P} \otimes q_{\theta, s}) \in \rL^\infty(G/P) \ovt \mathcal N^{W_{\theta, s}}q_{\theta, s}.$$

We now have all the required tools to start the proof of Theorem \ref{thm:NZ}.

\begin{proof}[Proof of Theorem \ref{thm:NZ}]
We assume that $\varphi$ is not $G$-invariant and we will show that there exist a proper parabolic subgroup $P \subset Q \subsetneq G$ and a $G$-equivariant normal unital $\ast$-embedding $\Theta : \rL^\infty(G/Q, \nu_Q) \to \mathcal M$ such that $ \varphi \circ \Theta = \nu_Q$. Since $\varphi$ is not $G$-invariant and since $\varphi \in \mathcal M_\ast$, $\varphi|_{\mathcal A}$ is not $G$-invariant either. Recall that $\varphi |_{\mathcal A} = \int_{G/P} \psi \circ \sigma_g^{-1} \, {\rm d}\nu_P(gP)$. This implies that $\psi$ is not $G$-invariant. Since the real rank of $G$ is at least two, the proof of \cite[Proposition I.1.2.2]{Ma91} shows that the subgroups $\overline U_\theta$ with $\emptyset \subsetneq \theta \subsetneq \Delta$  generate the subgroup $\overline V$. By \cite[Proposition I.1.2.1]{Ma91}, the subgroups $P$ and $\overline V$ generate $G$. Since $\psi$ is $P$-invariant but not $G$-invariant, the above reasoning shows that there exists $\emptyset \subsetneq \theta \subsetneq \Delta$ such that $\psi$ is not $\overline U_\theta$-invariant. We fix such a subset $\theta \subset \Delta$ and we choose  $s \in S'_\theta$.

The strategy of the {\bf first part} of proof is strongly inspired by the techniques developed by Nevo--Zimmer in \cite{NZ00} (most notably the proofs of \cite[Theorem 1]{NZ00} and \cite[Proposition 10.1]{NZ00}). 

\begin{claim}\label{claim:nonconstant}
For every $a \in \cA$, for $m_{\overline U_\theta}$-almost every $\ou_\theta \in \oU_\theta$, we have $\psi(\iota(a_\infty)(\overline u_\theta)) = \psi(\sigma_{\ou_\theta}^{-1}(a))$. In particular, there exists $a \in \mathcal A$ such that the bounded measurable function $\overline U_\theta \to \C : \overline u_\theta \mapsto \psi(\iota(a_\infty)(\overline u_\theta) )$ is non-constant in $\rL^\infty(\overline U_\theta)$.
\end{claim}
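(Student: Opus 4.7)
The idea is to simply compute $\psi \circ \iota(a_n)$ at the point $(\overline e, \overline u_\theta) \in \overline V_\theta \rtimes \overline U_\theta$ and pass to the limit along the subsequence provided by Lemma \ref{lem:NZ1}. By definition of $\iota$, for each $k \geq 0$ we have
\[\iota(\sigma_s^k(a))(\overline e, \overline u_\theta) = \pi_\psi(\sigma_{\overline u_\theta}^{-1} \sigma_s^k(a)).\]
Now, since $s \in S_\theta' \subset Z(S_\theta) = S_\theta$ lies in $Z(R_\theta)$ and $\overline u_\theta \in \overline U_\theta = R_\theta \cap \overline V$ belongs to $R_\theta$, the elements $s$ and $\overline u_\theta$ commute in $G$. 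Combining this with the fact that $\psi \in \mathcal N_\ast$ is $P$-invariant (so in particular $s$-invariant), we obtain
\[\psi(\iota(\sigma_s^k(a))(\overline e, \overline u_\theta)) = \psi(\sigma_{\overline u_\theta}^{-1} \sigma_s^k(a)) = \psi(\sigma_s^k \sigma_{\overline u_\theta}^{-1}(a)) = \psi(\sigma_{\overline u_\theta}^{-1}(a)).\]
Averaging over $k = 0, \dots, n$, we get $\psi(\iota(a_n)(\overline e, \overline u_\theta)) = \psi(\sigma_{\overline u_\theta}^{-1}(a))$ for every $n \in \N$ and every $\overline u_\theta \in \overline U_\theta$.

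By Lemma \ref{lem:NZ1}, there exists a subsequence $(n_k)_{k \in \N}$ such that for $m_{\overline U_\theta}$-almost every $\overline u_\theta \in \overline U_\theta$, the sequence $(\iota(a_{n_k})(\overline e, \overline u_\theta))_{k \in \N}$ converges strongly to $\iota(a_\infty)(\overline u_\theta)$ in $\mathcal N$. Applying the normal state $\psi \in \mathcal N_\ast$ (which is even bounded on the unit ball with respect to strong convergence since it is a vector state) and using the previous computation, we deduce that for $m_{\overline U_\theta}$-almost every $\overline u_\theta \in \overline U_\theta$,
\[\psi(\iota(a_\infty)(\overline u_\theta)) = \lim_k \psi(\iota(a_{n_k})(\overline e, \overline u_\theta)) = \psi(\sigma_{\overline u_\theta}^{-1}(a)).\]
This gives the first assertion.

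For the non-constancy assertion, recall that the subset $\theta \subsetneq \Delta$ was precisely chosen so that $\psi \in \mathcal S(\mathcal A)$ is not $\overline U_\theta$-invariant. Thus we can pick $a \in \mathcal A$ and $\overline u_\theta^0 \in \overline U_\theta$ such that $\psi(\sigma_{\overline u_\theta^0}^{-1}(a)) \neq \psi(a)$. Since the action $G \curvearrowright \mathcal A$ is norm continuous and $\psi \in \mathcal S(\mathcal A)$ is norm continuous, the map $\overline U_\theta \to \C : \overline u_\theta \mapsto \psi(\sigma_{\overline u_\theta}^{-1}(a))$ is continuous. It takes the distinct values $\psi(a)$ at $\overline e$ and $\psi(\sigma_{\overline u_\theta^0}^{-1}(a))$ at $\overline u_\theta^0$, so it is not constant on any nonempty open subset of $\overline U_\theta$. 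Since this continuous function agrees almost everywhere with $\overline u_\theta \mapsto \psi(\iota(a_\infty)(\overline u_\theta))$, the latter is non-constant as an element of $\rL^\infty(\overline U_\theta)$, as claimed. The only minor subtlety worth checking is the commutation of $s$ and $\overline U_\theta$, which is the reason the averaging does not move $\overline u_\theta$ around; the rest is a direct combination of Lemma \ref{lem:NZ1} and the choice of $\theta$.
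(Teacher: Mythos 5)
Your proof is correct and follows essentially the same route as the paper: compute $\psi(\iota(a_n)(\overline e,\overline u_\theta)) = \psi(\sigma_{\overline u_\theta}^{-1}(a))$ using that $s$ commutes with $\overline U_\theta$ and that $\psi$ is $P$-invariant, pass to the limit along the subsequence from Lemma \ref{lem:NZ1}, and then note that a continuous non-constant function on $\overline U_\theta$ cannot be almost everywhere constant. The only difference is that you spell out the commutation computation in more detail than the paper does, which is harmless.
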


\begin{proof}[Proof of Claim \ref{claim:nonconstant}]
By Lemma \ref{lem:NZ1}, there exists a subsequence $(n_k)_{k \in \N}$ such that for $m_{\overline U_\theta}$-almost every $\overline u_\theta \in \overline U_\theta$, $\iota(a_{n_k})(\overline e, \overline u_\theta) \to \iota(a_\infty)(\overline u_\theta)$ strongly in $\mathcal N$. Since the action $P \curvearrowright \mathcal N$ is $\psi$-preserving and since $s$ commutes with $\overline U_\theta$, we have $\psi(\iota(a_{n_k})(\overline e, \overline u_\theta)) = \psi(\iota(a)(\overline e, \overline u_\theta)) = \psi(\sigma_{\ou_\theta}^{-1}(a))$ for every $k \in \N$ and every $\overline u_\theta \in \overline U_\theta$. This implies that $\psi(\iota(a_\infty)(\overline u_\theta)) = \lim_k \psi(\iota(a_{n_k})(\overline e, \overline u_\theta)) = \psi(\sigma_{\ou_\theta}^{-1}(a))$ for $m_{\overline U_\theta}$-almost every $\overline u_\theta \in \overline U_\theta$.

To prove the second part, since $\psi$ is not $\overline U_\theta$-invariant, we may choose $a \in \cA$ such that the (continuous) function $ \oU_\theta \to \C : \ou_\theta \mapsto \psi(\sigma_{\ou_\theta}^{-1}(a))$ is non-constant. Since this function is continuous on $\oU_\theta$, we deduce that it is not $m_{\overline U_\theta}$-almost everywhere constant on $\oU_\theta$, so the second part of the claim follows from the first.
\end{proof}

Set $q_0 = q_{\theta, s} \in \mathcal N^P \cap \mathcal Z(\mathcal N^{W_{\theta, s}})$ and $\mathcal N_0 = \mathcal N^{W_{\theta, s}}q_0$. Recall that $q \leq q_0$. Since $q_0 \in \mathcal N^P$, we have that $\mathcal N_0 \subset q_0 \mathcal N q_0$ is a globally $P$-invariant von Neumann subalgebra. Define
$$\mathcal M_0 = \{x \in \mathcal M \mid \iota(x)(1_{G/P} \otimes q_0) = (1_{G/P} \otimes q_0) \iota(x) \quad \text{and} \quad \iota(x)(1_{G/P} \otimes q_0) \in \rL^\infty(G/P) \ovt \cN_0 \}.$$
Since $\mathcal N_0 \subset q_0 \mathcal N q_{0}$ is globally $P$-invariant, since $(1 \otimes q_0) \in (\rL^\infty(G/P) \ovt \cN)^G$ and since $\iota$ is a $G$-equivariant normal unital $\ast$-embedding, it follows that $\mathcal M_0 \subset \mathcal M$ is a globally $G$-invariant von Neumann subalgebra. Define the $G$-equivariant normal unital $\ast$-homomorphism 
\[\iota_0 : \cM_0 \to  \rL^\infty(G/P) \ovt \cN_0 : x \mapsto \iota(x)(1_{G/P} \otimes q_0).\]
Since $q = \supp(\psi)$ and since $q \leq q_0$, we have $(\nu_P \otimes \psi)\circ \iota_0 = \varphi$ and so $\iota_0$ is indeed a $\ast$-embedding. Since the action $G \curvearrowright \mathcal M$ is ergodic, the action $G \curvearrowright \mathcal M_0$ is ergodic.

\begin{claim}\label{claim:nonpreserving}
The action $G \curvearrowright \mathcal M_0$ is not $\varphi$-preserving. In particular,  we have $\cM_0 \neq \C 1$.
\end{claim}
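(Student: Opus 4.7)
The plan is to exhibit a concrete non-scalar element of $\cM_0$ whose $G$-orbit is not $\varphi$-preserved; the non-triviality $\cM_0 \neq \C 1$ will follow immediately.

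By Claim \ref{claim:nonconstant}, I fix $a \in \cA$ such that $f(\ou_\theta) := \psi(\sigma_{\ou_\theta}^{-1}(a))$ is non-constant on $\oU_\theta$. Viewing $\iota(a_\infty)$ as an element $h \in \rL^\infty(\oU_\theta, \cN^s)$ via Lemma \ref{lem:NZ1}, I have $\psi(h(\ou_\theta)) = f(\ou_\theta)$ almost everywhere, so $h$ is non-constant and hence $a_\infty \notin \C 1$. Applying Lemma \ref{lem:NZ2} pointwise, each $h(\ou_\theta) \in \cN^s$ commutes with $q_0$ and satisfies $h(\ou_\theta) q_0 \in \cN^{W_{\theta,s}} q_0 = \cN_0$; therefore $\iota(a_\infty)$ commutes with $1_{G/P} \otimes q_0$ and its product with $1_{G/P} \otimes q_0$ lies in $\rL^\infty(G/P) \ovt \cN_0$. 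This proves $a_\infty \in \cM_0 \setminus \C 1$, which already gives $\cM_0 \neq \C 1$.

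To establish non-preservation, I compute $\varphi(\sigma_g(a_\infty))$ for $g \in \oU_\theta \subset \oV$. Since the induced $\oV$-action on $\rL^\infty(\oV) \ovt \cN$ is purely left translation on the $\oV$-factor (with no cocycle on $\cN$), and the semidirect identity $g^{-1}\ov_\theta \ou_\theta = (g^{-1}\ov_\theta g)(g^{-1}\ou_\theta)$ keeps $g^{-1}\ov_\theta g \in \oV_\theta$, I obtain $\iota(\sigma_g(a_\infty))(\ov_\theta, \ou_\theta) = h(g^{-1}\ou_\theta)$. Applying $\nu_\oV \otimes \psi$ yields
\[ \varphi(\sigma_g(a_\infty)) = \int_{\oU_\theta} f(g^{-1}\ou) \dd\tilde\nu(\ou), \]
where $\tilde\nu$ is the pushforward of $\nu_\oV$ along the projection $\pi : \oV = \oV_\theta \oU_\theta \to \oU_\theta$. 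Since $\nu_P$ is $G$-quasi-invariant and $\pi$ intertwines the left $\oU_\theta$-action on $\oV$ with left translation on $\oU_\theta$, the measure $\tilde\nu$ is $\oU_\theta$-quasi-invariant and hence equivalent to Haar measure on $\oU_\theta$.

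The principal obstacle is deducing the non-constancy of $g \mapsto \int_{\oU_\theta} f(g^{-1}\ou)\dd\tilde\nu(\ou)$ on $\oU_\theta$ from the non-constancy of $f$. My plan is to run the preceding computation uniformly for the whole family $\{f_{a'} : a' \in \cA\}$, where $f_{a'}(\ou) = \psi(\sigma_\ou^{-1}(a'))$. If $\varphi$ were $\oU_\theta$-invariant on every resulting $a'_\infty \in \cM_0$, then the signed measure $g^{-1}\tilde\nu - \tilde\nu$ would be annihilated by every $f_{a'}$. Since $\cA$ is weakly dense in $\cN$, the family $\{f_{a'}\}$ separates the $\oU_\theta$-orbit of $\psi$ in the state space of $\cN$; combined with the absolute continuity of $\tilde\nu$ with respect to Haar and the non-compactness of $\oU_\theta$, this separation forces the pushforward of $\tilde\nu$ to the orbit of $\psi$ to be $\oU_\theta$-invariant, contradicting Claim \ref{claim:nonconstant}. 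Hence there is $g \in \oU_\theta$ with $\varphi(\sigma_g(a_\infty)) \neq \varphi(a_\infty)$, completing the proof.
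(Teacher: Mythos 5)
Your first paragraph is fine and agrees with the paper: the combination of Lemmas \ref{lem:NZ1} and \ref{lem:NZ2} does place $a_\infty$ in $\cM_0$, and Claim \ref{claim:nonconstant} shows $a_\infty \notin \C 1$, so $\cM_0 \neq \C 1$. The computation $\varphi(\sigma_g(a_\infty)) = \int_{\oU_\theta} f(g^{-1}\ou)\,\d\tilde\nu(\ou)$ for $g \in \oU_\theta$ is also correct. The gap is exactly at the point you yourself flag as ``the principal obstacle'': the implication that non-constancy of $f$ forces non-constancy of $g \mapsto \int_{\oU_\theta} f(g^{-1}\ou)\,\d\tilde\nu(\ou)$ for $g$ ranging only over $\oU_\theta$ is false in general, and your proposed repair does not establish it. Already for $\oU_\theta = \R$ one can take a strictly positive probability density $\tilde\nu$ with $\int e^{\mathrm{i}t}\,\d\tilde\nu(t)=0$ and $f(t)=1+\cos t$: the translate-average is identically $1$ although $f$ is non-constant. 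Testing against the whole family $\{f_{a'}\}_{a' \in \cA}$ only tells you that the barycenters of the pushforwards of $g_\ast\tilde\nu$ and $\tilde\nu$ under $\ou \mapsto \ou\psi \in \cS(\cA)$ coincide; it does not give equality of these pushforward measures, let alone translation-invariance of $\tilde\nu$, and the subsequent appeal to non-compactness of $\oU_\theta$ is a non sequitur (non-compact amenable groups do carry invariant probability measures on compact orbit closures). Nothing in your argument uses the specific structure of $f$ and $\tilde\nu$ in a way that would exclude the averaging phenomenon above.

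The paper closes this gap by a different mechanism. Arguing by contradiction, if the action preserved $\varphi$ on $\cM_0$, then for $x \in \cM_0$ the function $G \to \C : g \mapsto \int_{G/P}\psi(\iota_0(x)(gw))\,{\rm d}\nu_P(w)$ --- defined on all of $G$, not merely on $\oU_\theta$ --- is a \emph{constant} bounded $\mu$-harmonic function. Since $(G/P,\nu_P)$ is the $(G,\mu)$-Poisson boundary, the injectivity of the Poisson transform (Theorem \ref{thm:harmonicity}) then forces the boundary function $w \mapsto \psi(\iota_0(x)(w))$ itself to be $\nu_P$-almost everywhere constant, which for $x = a_\infty$ contradicts Claim \ref{claim:nonconstant}. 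The essential input you are missing is harmonicity over the whole group combined with the Poisson representation theorem; restricting to translations inside $\oU_\theta$ cannot work.
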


\begin{proof}[Proof of Claim \ref{claim:nonpreserving}]
By contradiction, assume that the action $G \curvearrowright \mathcal M_0$ is $\varphi$-preserving. Since $\iota_0$ is a $G$-equivariant normal unital $\ast$-embedding such that $(\nu_P \otimes \psi) \circ \iota_0 = \varphi$ and since the continuous $P$-action $\sigma : P \curvearrowright \mathcal N_0$ is $\psi$-preserving, for every $x \in \mathcal M_0$, the following quantity does not depend on $g \in G$:
\begin{align*}
\varphi(\sigma_g^{-1}(x)) &= \int_{G/P} \psi(\iota_0(\sigma_g^{-1}(x))(w)) \, {\rm d}\nu_P(w)\\
&= \int_{G/P} \psi(\sigma_{c_\tau(g^{-1}, gw)}(\iota_0(x)(gw))) \, {\rm d}\nu_P(w) \\
&= \int_{G/P} \psi(\iota_0(x)(gw)) \, {\rm d}\nu_P(w).
\end{align*}
Therefore, the bounded $\mu$-harmonic function $G \to \C : g \mapsto \int_{G/P} \psi(\iota_0(x)(gw)) \, {\rm d}\nu_P(w)$ is constant. Since $(G/P, \nu_P)$ is the $(G, \mu)$-Poisson boundary,  Theorem \ref{thm:harmonicity} implies that the bounded measurable function $G/P \to \C : w \mapsto \psi(\iota_0(x)(w))$ is $\nu_P$-almost everywhere constant.
 
Regarding $\rL^\infty(G/P, \nu_P) = \rL^\infty(\overline V, \nu_{\overline V})$, since $a_\infty \in \mathcal M_0$ and since $q \leq q_0$, we have $\psi(\iota(a_\infty)(\overline u_\theta) ) = \psi(\iota_0(a_\infty)(\overline u_\theta) )$ for $m_{\overline U_\theta}$-almost every $\overline u_\theta \in \overline U_\theta$ and so the bounded measurable function $\overline V \to \C : (\overline v_\theta, \overline u_\theta) \mapsto \psi(\iota(a_\infty)(\overline u_\theta) )$ is  $\nu_{\overline V}$-almost everywhere constant. This contradicts Claim \ref{claim:nonconstant}.
\end{proof}

At this point of the proof, we have constructed a globally $G$-invariant von Neumann subalgebra $\mathcal M_0 \subset \mathcal M$ such that the ergodic $(G, \mu)$-von Neumann algebra $(\mathcal M_0, \varphi)$ satisfies the following properties:
\begin{enumerate}
\item There exist $\emptyset \subsetneq \theta \subsetneq \Delta$, $s \in S_\theta'$, a von Neumann algebra $\mathcal N_0$ (with separable predual) endowed with a normal state $\psi$ and a continuous action $\sigma : P \curvearrowright \mathcal N_0$ that is $\psi$-preserving and such that $W_{\theta,s} \subset \ker(\sigma)$.
\item There exists a $G$-equivariant normal unital $\ast$-embedding $\iota_0 : \mathcal M_0 \to \rL^\infty(G/P) \ovt \cN_0$ such that $(\nu_P \otimes \psi) \circ \iota_0 = \varphi$.
\item The action $G \curvearrowright \mathcal M_0$ is not $\varphi$-preserving.
\end{enumerate}
As the action $G \curvearrowright \mathcal M_0$ may not be faithful, we denote by $N \lhd G$ its kernel and we consider the continuous action $G/N \curvearrowright \mathcal M_0$. We note that the quotient map $\pi_{G/N} : G \to G/N$ sends the $K$-invariant admissible Borel probability measure $\mu \in \Prob(G)$ to a $\pi_{G/N}(K)$-invariant admissible Borel probability measure $\overline \mu \in \Prob(G/N)$ and that the action $G/N \curvearrowright (\mathcal M_0, \varphi)$ is ergodic and $\overline \mu$-stationary. Moreover $G/N$ is a connected semisimple Lie group with trivial center, no nonzero compact factor, all of whose simple factors are of real rank at least two. Since the action $G/N \curvearrowright \mathcal M_0$ is not $\varphi$-preserving, we can repeat the first part of the proof of Theorem \ref{thm:NZ}. Since $G$ has only finitely many simple factors, we can continue a finite number of steps and we obtain that there exist a factor group $\pi_H : G \to H$, a $\pi_H(K)$-invariant admissible Borel probability measure $\mu_H \in \Prob(H)$, a globally $G$-invariant von Neumann subalgebra $\mathcal M_H \subset \mathcal M$ such that the continuous action $H \curvearrowright \mathcal M_H$ is {\em faithful}, ergodic, $\mu_H$-stationary and satisfies the aforementioned properties $(1)$, $(2)$, $(3)$. We will next show that there exist a proper parabolic subsgroup $F < H$ and a $H$-equivariant normal unital $\ast$-embedding $\Theta : \rL^\infty(H/F) \to \mathcal M_H$ such that $\varphi \circ \Theta = \nu_{H/F}$. Write $G = H \times N$ where $N$ is the kernel of the factor map $G \to H$. Then $Q = F \times N < G$ is a proper parabolic subgroup, $(G/Q, \nu_Q) \cong (H/F, \nu_{H/F})$ and $\Theta : \rL^\infty(G/Q) \to \mathcal M_H$ is a $G$-equivariant normal unital $\ast$-embedding such that $\varphi \circ \Theta = \nu_{Q}$. Thus, we may still denote by $G$ the factor group $H$ and by $\mathcal M_0 \subset \mathcal M$ the globally $G$-invariant von Neumann subalgebra. Moreover, the continuous action $G \curvearrowright \mathcal M_0$ is faithful, ergodic, $\mu$-stationary and does not preserve $\varphi$.

Note that the above iterative procedure is unnecessary if $G$ is simple. As we explain in Remark \ref{simple case} below, one can in fact deduce the semisimple case from the simple case thanks to an easy observation from \cite{BBHP20}.

The strategy of the {\bf second part} of the proof of Theorem \ref{thm:NZ} is new compared to the proof of \cite[Theorem 1]{NZ00}. We make use of various von Neumann algebraic techniques involving essential values into noncommutative algebras, disintegration theory and the slice mapping theorem of Ge--Kadison \cite{GK95} and its generalization by Str\u{a}til\u{a}--Zsid\'o \cite{SZ98}. We will show that the continuous $G$-action $G \curvearrowright \mathcal Z(\mathcal M_0)$ (which is ergodic and  $\mu$-stationary) does not preserve $\varphi$. We will then apply \cite[Theorem 1]{NZ00} to obtain the second item stated in Theorem \ref{thm:NZ}.

Recall that $\overline V = \overline V_\theta \rtimes \overline U_\theta$ so that $\rL^\infty(\overline V) = \rL^\infty(\overline V_\theta) \ovt \rL^\infty(\overline U_\theta)$. Choose a Borel probability measure $\nu_{\overline V_\theta} \in \Prob(\overline V_\theta)$ that is in the same class as the Haar measure $m_{\overline V_\theta}$. Put $\mathcal Q = \rL^\infty(\overline U_\theta) \ovt \mathcal N_0$ and regard 
$$ \rL^\infty(\overline V) \ovt \mathcal N_0 \cong \rL^\infty(\overline V_\theta) \ovt (\rL^\infty(\overline U_\theta) \ovt \mathcal N_0) \cong \rL^\infty(\overline V_\theta, \mathcal Q).$$ 
As usual, we endow $\mathcal Q$ with the strong operator topology. Let $f \in \rL^\infty(\overline V_\theta, \mathcal Q)$, which we regard as an element of $\rL^\infty(\overline V_\theta, \Ball_{\mathcal Q}(0, \|f\|_\infty))$. Denote by $F_f \subset \Ball_{\mathcal Q}(0, \|f\|_\infty)$ the essential range of $f$. Since $\mathcal Q$ has separable predual, $\Ball_{\mathcal Q}(0, \|y\|_\infty) \subset \mathcal Q$ is Polish with respect to the strong operator topology. Lemma \ref{lem:essential-range} implies that the essential range $F_f \subset \Ball_{\mathcal Q}(0, \|f\|_\infty)$ of $f$ is strongly closed in $\mathcal Q$ and coincides with the set of essential values of $f$. Moreover, we may view $f$ as an element of $\rL^\infty(\overline V_\theta, F_f)$. 

Regard $\iota_0(\mathcal M_0) \subset \rL^\infty(\overline V_\theta, \mathcal Q)$.  Choose a strongly dense countable subset $\{x_n \; | \; n \in \N\}$ of $\mathcal M_0$. Define the von Neumann subalgebra $\mathcal Q_0 \subset \mathcal Q$ by
$$\mathcal Q_0= \bigvee \{ F_{\iota_0(x_n)} \mid n \in \N \}.$$

\begin{claim}\label{claim:inclusion}
We have $\C1_{\overline V_\theta} \ovt \mathcal Q_0 \subset \iota_0(\mathcal M_0) \subset \rL^\infty(\overline V_\theta) \ovt \mathcal Q_0$.
\end{claim}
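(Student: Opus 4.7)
The second inclusion is essentially a formality. By the very definition of $\mathcal Q_0$, each $\iota_0(x_n)$ takes values in its essential range $F_{\iota_0(x_n)} \subset \mathcal Q_0$, so $\iota_0(x_n) \in \rL^\infty(\overline V_\theta, \mathcal Q_0) = \rL^\infty(\overline V_\theta) \ovt \mathcal Q_0$. Since $\iota_0$ is normal and $\{x_n\}_{n \in \N}$ is strongly (hence ultraweakly) dense in $\mathcal M_0$, the inclusion $\iota_0(\mathcal M_0) \subset \rL^\infty(\overline V_\theta) \ovt \mathcal Q_0$ follows from the ultraweak closedness of the right-hand side.

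For the first inclusion, I would introduce the set $\mathcal B = \{b \in \mathcal Q : 1_{\overline V_\theta} \otimes b \in \iota_0(\mathcal M_0)\}$: this is a unital ultraweakly closed $*$-subalgebra of $\mathcal Q$, hence a von Neumann subalgebra. Since $\mathcal Q_0$ is generated by the essential ranges $F_{\iota_0(x_n)}$, it suffices to show $F_{\iota_0(x_n)} \subset \mathcal B$ for every $n$. Fix $n$, write $f = \iota_0(x_n)$, and take $b \in F_f$. The crucial tool is the $\overline V_\theta$-invariance of $\iota_0(\mathcal M_0)$: by formula \eqref{special action}, $\overline V_\theta$ acts on $\rL^\infty(\overline V_\theta) \ovt \mathcal Q$ by translation on the first factor only, so $\sigma_v(f) \in \iota_0(\mathcal M_0)$ for every $v \in \overline V_\theta$, and $\iota_0(\mathcal M_0)$ is closed under ultraweak integration of bounded measurable families of such translates. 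Since $b$ is an essential value of $f$ (Lemma \ref{lem:essential-range}) and $\Ball_{\mathcal Q}(0, \|f\|_\infty)$ is Polish in the strong topology (as $\mathcal Q$ has separable predual), one may pick a strong Lebesgue point $v_0$ of $f$ with $f(v_0) = b$; translating by $\sigma_{v_0^{-1}}$, one reduces to producing $1 \otimes b$ in the case where $b$ is the Lebesgue value of $f$ at the identity of $\overline V_\theta$.

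The main obstacle is this last extraction step: point evaluation is not a normal operation on $\rL^\infty(\overline V_\theta, \mathcal Q)$, so one cannot literally evaluate $f$ at the identity to land in $\iota_0(\mathcal M_0)$, and on the other hand, averaging translates $\sigma_v(f)$ against a probability density $\phi \in \rL^1(\overline V_\theta)$ produces another function in $\rL^\infty(\overline V_\theta, \mathcal Q)$ rather than a genuine constant. My plan is to combine the $\overline V_\theta$-invariance with the slice mapping techniques of Ge--Kadison and Str\u{a}til\u{a}--Zsid\'o alluded to earlier in the section. Specifically, I would use the characterization of $\mathcal Q_0$ as the smallest von Neumann subalgebra $N \subset \mathcal Q$ with $\iota_0(\mathcal M_0) \subset \rL^\infty(\overline V_\theta) \ovt N$, combined with the fact that each normal slice $(\omega \otimes \id)(\iota_0(\mathcal M_0))$ for $\omega \in \rL^1(\overline V_\theta)$ lands in $\mathcal Q_0$. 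Averaging translates $\sigma_v(f)$ against approximate identities supported in shrinking neighborhoods of the identity in $\overline V_\theta$, and then extracting a suitable ultraweak cluster point, should furnish $1 \otimes b$ as a limit of elements of $\iota_0(\mathcal M_0)$. The delicate point is to ensure that this limiting construction genuinely lands in $\iota_0(\mathcal M_0)$ — exploiting its ultraweak closedness and its $\overline V_\theta$-invariance — rather than merely in the ambient $\rL^\infty(\overline V_\theta) \ovt \mathcal Q_0$.
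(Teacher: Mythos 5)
Your treatment of the second inclusion and your reduction of the first inclusion to showing that every essential value $b$ of every $f=\iota_0(x_n)$ satisfies $1_{\overline V_\theta}\otimes b\in\iota_0(\cM_0)$ are both correct and match the paper. The gap is in how you propose to produce $1_{\overline V_\theta}\otimes b$. Translation by $\overline V_\theta$ together with mollification against approximate identities cannot work, for the reason you yourself flag but do not resolve: convolving $f$ with probability densities concentrating at the identity of $\overline V_\theta$ converges back to $f$, not to a constant. Translations preserve the profile of $f$ up to a shift, so no ultraweak cluster point of such averages will be the constant function $1_{\overline V_\theta}\otimes b$ unless $f$ was already essentially constant. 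The slice-map machinery of Ge--Kadison and Str\u{a}til\u{a}--Zsid\'o is also not the right tool here; in the paper it only enters later, in Claims 5.9 and 5.10.

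The missing idea is the hyperbolic dynamics of the element $s\in S_\theta'$ fixed at the outset, which acts on $\overline V_\theta$ by an expanding conjugation and, crucially, acts \emph{trivially} on $\cN_0$ (this is property $(1)$ of the reduction preceding the claim, i.e.\ $W_{\theta,s}\subset\ker(\sigma)$), so that $\sigma_{s^{n}\overline h}(f)(\overline v_\theta)=f(\overline h^{-1}s^{-n}\overline v_\theta s^{n})$ with no twisting of the values. One takes the positive-measure set $\overline B_k$ where $\|f(\cdot)-b\|_\Psi<\tfrac1{k+1}$ (for a faithful state $\Psi$ on $\cQ_0$), and by Margulis' Lemma IV.2.5(a) finds $\overline h_k\in\overline V_\theta$ and $n_k$ with $\nu_{\overline V_\theta}(s^{n_k}\overline h_k\overline B_k s^{-n_k})\to 1$; then $\sigma_{g_k}(f)\to 1_{\overline V_\theta}\otimes b$ strongly for $g_k=s^{n_k}\overline h_k$. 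Since $\iota_0(\cM_0)$ is globally $G$-invariant and strongly closed, the limit lies in $\iota_0(\cM_0)$. It is exactly this expansion of a positive-measure set to a conull set by conjugation with powers of $s$ -- not averaging near the identity -- that turns the essential value $b$ into a genuine constant element of $\iota_0(\cM_0)$.
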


\begin{proof}[Proof of Claim \ref{claim:inclusion}]
By construction of $\cQ_0$, we have $\iota_0(x_n) \in \rL^\infty(\overline V_\theta, \cQ_0)$, for every $n \in \N$.  Since $\{x_n \; | \; n \in \N\}$ is strongly dense in $\mathcal M_0$ and since $\mathcal Q_0$ is a von Neumann algebra, this further implies that  $\iota_0(\mathcal M_0) \subset \rL^\infty(\overline V_\theta, \mathcal Q_0) = \rL^\infty(\overline V_\theta) \ovt \mathcal Q_0$. 

Choose a faithful state $\Psi \in (\mathcal Q_0)_\ast$. Let $n \in \N$ and set $f = \iota_0(x_n) \in F_{\iota_0(x_n)} \subset \mathcal Q_0$. Regard $f \in \rL^\infty(\overline V_\theta, F_{f})$ with $F_{f} \subset \Ball_{\mathcal Q_0}(0, \|f\|_\infty)$. Let $b \in F_f$ be any essential value of $f$.  For every $k \in \N$, define 
$$\overline B_k = \left\{ \overline v_\theta \in \overline V_\theta \mid \| f(\overline v_\theta) - b\|_\Psi < \frac{1}{k + 1}\right \}.$$
Since $b \in F_{f}$, we have $\nu_{\overline V_\theta}(\overline B_k) > 0$. For every $k \in \N$, applying \cite[Lemma IV.2.5(a)]{Ma91} to $\overline B_k$, there exist $\overline h_k \in \overline V_\theta$ and $n_k \in \N$ large enough so that $\lim_k \nu_{\overline V_\theta} (s^{n_k} \, \overline h_k \overline B_k \, s^{-n_k}) = 1$. For every $k \in \N$, set $g_k = s^{n_k} \overline h_k \in s^\Z \ltimes \overline V_\theta \subset \overline P$. We will show that $\sigma_{g_k}(f)$ converges strongly to $1_{\oV_\theta} \otimes b$ in $\mathcal Q_0$ as $k \to \infty$. This will clearly imply that $1_{\oV_\theta} \otimes b \in \iota_0(\cM_0)$ and the claim will follow.

In view of formula \eqref{special action} and since $s$ acts trivially on $\cN_0$, observe that 
\[\sigma_{g_k}(f)(\ov_\theta) = \sigma_{s}^{n_k}(f((\overline h_k)^{-1}s^{-n_k} \ov_\theta s^{n_k})) = f((\overline h_k)^{-1}s^{-n_k} \ov_\theta s^{n_k}), \text{ for all } \ov_\theta \in \oV_\theta.\]
For all $k \in \N$, set $\overline C_k = s^{n_k} \, \overline h_k \overline B_k \, s^{-n_k} \subset \overline V_\theta$. By assumption, we have $\lim_k \nu_{\overline V_\theta} (\overline C_k ) = 1$ and we may compute
\begin{align*}
\|\sigma_{g_k}(f) - 1_{\overline V_\theta} \otimes b\|_{\nu_{\overline V_\theta} \otimes \Psi}^2 &= \int_{\overline V_\theta} \|f((\overline h_k)^{-1}s^{-n_k} \ov_\theta s^{n_k}) - b\|_\Psi^2 \, {\rm d}\nu_{\overline V_\theta}(\overline v_\theta) \\
&= \int_{\overline C_k} \|f((\overline h_k)^{-1}s^{-n_k} \ov_\theta s^{n_k}) - b\|_\Psi^2 \, {\rm d}\nu_{\overline V_\theta}(\overline v_\theta)  \\
&\qquad + \int_{\overline V_\theta \setminus \overline C_k} \|f((\overline h_k)^{-1}s^{-n_k} \ov_\theta s^{n_k}) - b\|_\Psi^2 \, {\rm d}\nu_{\overline V_\theta}(\overline v_\theta) \\
&\leq \frac{1}{(k + 1)^2} \, \nu_{\overline V_\theta}(\overline C_k) + 2 \|f\|_\infty \, \nu_{\overline V_\theta}(\overline V_\theta \setminus \overline C_k). 
\end{align*}
This quantity converges to $0$ as $k \to \infty$. Since the sequence $(\sigma_{g_k}(f))_{k \in \N}$ is uniformly bounded, it follows that $\sigma_{g_k}(f) \to 1_{\overline V_\theta} \otimes b$ strongly as $k \to \infty$. 
\end{proof}

Claim \ref{claim:inclusion} implies that $\C1_{\overline V_\theta} \ovt \mathcal Z(\mathcal Q_0) \subset \mathcal Z(\iota_0(\mathcal M_0)) \subset \rL^\infty(\overline V_\theta) \ovt \mathcal Z(\mathcal Q_0)$.

\begin{claim}\label{claim:factor}
We have that $ \C1_{\overline V_\theta} \ovt \mathcal Z(\mathcal Q_0) \neq \mathcal Z(\iota_0(\mathcal M_0))$. 
\end{claim}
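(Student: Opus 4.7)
The plan is to argue by contradiction and invoke the Ge--Kadison slice mapping theorem in its generalization to the non-factor case due to Str\u{a}til\u{a}--Zsid\'o \cite{SZ98}. Assume $\mathcal Z(\iota_0(\mathcal M_0)) = \C 1_{\overline V_\theta} \ovt \mathcal Z(\mathcal Q_0)$. I plan to deduce from this equality that $\iota_0(\mathcal M_0) = \C 1_{\overline V_\theta} \ovt \mathcal Q_0$; by formula \eqref{special action}, $\overline V_\theta$ would then act trivially on $\iota_0(\mathcal M_0)$, hence on $\mathcal M_0$, contradicting the faithfulness of the $G$-action on $\mathcal M_0$ that was arranged at the end of the first part of the proof, since $\theta \subsetneq \Delta$ forces $\overline V_\theta \neq \{e\}$.

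To this end, set $\mathcal R := \iota_0(\mathcal M_0) \cap (\rL^\infty(\overline V_\theta) \ovt \mathcal Z(\mathcal Q_0))$. Since the relative commutant of $\C 1_{\overline V_\theta} \ovt \mathcal Q_0$ inside $\rL^\infty(\overline V_\theta) \ovt \mathcal Q_0$ equals $\rL^\infty(\overline V_\theta) \ovt \mathcal Z(\mathcal Q_0)$, the subalgebra $\mathcal R$ is exactly the relative commutant of $\C 1 \ovt \mathcal Q_0$ in $\iota_0(\mathcal M_0)$, and $\mathcal R$ is abelian. The containment $\mathcal Z(\iota_0(\mathcal M_0)) \subset \mathcal R$ is automatic because central elements must commute with the subalgebra $\C 1 \ovt \mathcal Q_0$. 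For the reverse inclusion, the Str\u{a}til\u{a}--Zsid\'o slice mapping theorem applied to $\C 1 \ovt \mathcal Q_0 \subset \iota_0(\mathcal M_0) \subset \rL^\infty(\overline V_\theta) \ovt \mathcal Q_0$ should yield the decomposition $\iota_0(\mathcal M_0) = \mathcal R \vee (\C 1 \ovt \mathcal Q_0)$. Since $\mathcal R$ is abelian and commutes with $\C 1 \ovt \mathcal Q_0$, it commutes with both generators and hence with all of $\iota_0(\mathcal M_0)$, giving $\mathcal R \subset \mathcal Z(\iota_0(\mathcal M_0))$. Combining the two inclusions with the standing hypothesis yields $\mathcal R = \C 1 \ovt \mathcal Z(\mathcal Q_0)$, from which $\iota_0(\mathcal M_0) = \mathcal R \vee (\C 1 \ovt \mathcal Q_0) = \C 1_{\overline V_\theta} \ovt \mathcal Q_0$, which is the contradiction promised above.

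The main obstacle is the clean invocation of Str\u{a}til\u{a}--Zsid\'o's theorem in the form $\iota_0(\mathcal M_0) = \mathcal R \vee (\C 1 \ovt \mathcal Q_0)$ without assuming that $\mathcal Q_0$ is a factor; verifying that their hypotheses (notably the separability of the predual and the associated slice map property) are satisfied in our setting, and extracting precisely this decomposition, is the conceptually delicate input of the argument. The rest of the proof is a purely algebraic/structural manipulation together with the explicit formula for the $\overline V_\theta$-action from \eqref{special action}.
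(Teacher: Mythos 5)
Your proposal is correct and follows essentially the same route as the paper's proof: argue by contradiction, identify $\iota_0(\cM_0)\cap(\rL^\infty(\oV_\theta)\ovt\cZ(\cQ_0))$ with $\cZ(\iota_0(\cM_0))$, invoke the Str\u{a}til\u{a}--Zsid\'o theorem \cite[Theorem 4.2]{SZ98} (which indeed holds without factoriality of $\cQ_0$, so the obstacle you flag is not one) to conclude that $\iota_0(\cM_0)=\C 1_{\oV_\theta}\ovt\cQ_0$, and contradict the faithfulness of $G\actson\cM_0$ via \eqref{special action}. The only cosmetic differences are that the paper derives the final contradiction from $s\in S_\theta'$ acting trivially rather than from $\oV_\theta$, and that the inclusion $\cR\subset\cZ(\iota_0(\cM_0))$ follows directly from $\rL^\infty(\oV_\theta)\ovt\cZ(\cQ_0)$ being the center of $\rL^\infty(\oV_\theta)\ovt\cQ_0$, without needing the splitting theorem for that step.
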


\begin{proof}[Proof of Claim \ref{claim:factor}]
The following approach was suggested to us by Narutaka Ozawa. While our original proof relies on Ge--Kadison's splitting theorem (see \cite[Theorem A]{GK95}), his argument relies on Str\u{a}til\u{a}--Zsid\'o's generalization of Ge--Kadison's result (see \cite[Theorem 4.2]{SZ98}). By contradiction, assume that $ \C1_{\overline V_\theta} \ovt \mathcal Z(\mathcal Q_0) = \mathcal Z(\iota_0(\mathcal M_0))$. Since 
$$\iota_0(\mathcal M_0) \cap (\rL^\infty(\overline V_\theta) \ovt \mathcal Z(\mathcal Q_0)) = \mathcal Z(\iota_0(\mathcal M_0)) = \C 1_{\overline V_\theta} \ovt \mathcal Z(\mathcal Q_0)$$ splits, \cite[Theorem 4.2]{SZ98} implies that $\iota_0(\mathcal M_0) = \C 1_{\overline V_\theta} \ovt \mathcal Q_0$ splits. Since $s$ acts trivially on $\mathcal Q_0$, \eqref{special action} implies that $s$ acts trivially on $\mathcal M_0$.  This however contradicts the fact that the action $G \curvearrowright \mathcal M_0$ is faithful. Thus, we have $ \C1_{\overline V_\theta} \ovt \mathcal Z(\mathcal Q_0) \neq \mathcal Z(\iota_0(\mathcal M_0))$.
\end{proof}

\begin{claim}\label{claim:final}
The action $G \curvearrowright \mathcal Z(\mathcal M_0)$ is not $\varphi$-preserving.
\end{claim}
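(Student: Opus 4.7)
The plan is to argue by contradiction: suppose that $\varphi$ is $G$-invariant when restricted to $\mathcal Z(\mathcal M_0)$, and contradict Claim \ref{claim:factor}.

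\textbf{Step 1 (Poisson boundary reduction).} First, I would run the argument of Claim \ref{claim:nonpreserving} verbatim, but restricted to elements $z \in \mathcal Z(\mathcal M_0)$. The hypothesis ensures that $g \mapsto \varphi(\sigma_g^{-1}(z))$ is constant on $G$, and Theorem \ref{thm:harmonicity} then forces $w \mapsto \psi(\iota_0(z)(w))$ to be $\nu_P$-essentially constant, equal to $\varphi(z)$. Since $\mathcal Z(\mathcal M_0)$ is a $*$-subalgebra, the same reasoning applied to products yields
\[
\psi\bigl(\iota_0(z_1)(w)\,\iota_0(z_2)(w)\bigr) \;=\; \varphi(z_1 z_2) \qquad \text{for all } z_1,z_2 \in \mathcal Z(\mathcal M_0) \text{ and $\nu_P$-a.e. } w.
\]

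\textbf{Step 2 (Extract essential values of central elements).} Next, I would invoke Claim \ref{claim:factor} to pick $z \in \mathcal Z(\iota_0(\mathcal M_0)) \setminus (\C 1_{\oV_\theta} \ovt \mathcal Z(\mathcal Q_0))$, so that, viewed as a function $\oV_\theta \to \mathcal Z(\mathcal Q_0)$, the essential range $F_z \subset \mathcal Z(\mathcal Q_0)$ contains at least two distinct points $b_1 \ne b_2$. Replaying the sliding construction of Claim \ref{claim:inclusion} with $s \in S'_\theta$ and Margulis' translation lemma produces sequences $(g_k^{(i)})_k \subset s^\Z \ltimes \oV_\theta$ with $\sigma_{g_k^{(i)}}(z) \to 1_{\oV_\theta} \otimes b_i$ strongly for $i = 1,2$. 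Because $\mathcal Z(\iota_0(\mathcal M_0))$ is $G$-invariant and strongly closed, the limits $1_{\oV_\theta} \otimes b_i$ lie in the center, so $b_i \in \mathcal Z(\mathcal Q_0)$.

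\textbf{Step 3 (Extracting the contradiction).} Applying Step 1 to the central elements $1_{\oV_\theta} \otimes b_i$ together with $G$-invariance of $\varphi$ along the sliding sequences gives $\psi(b_i) = \varphi(1_{\oV_\theta} \otimes b_i) = \lim_k \varphi(\sigma_{g_k^{(i)}}(z)) = \varphi(z)$, and the same reasoning applied to all elements of the commutative von Neumann subalgebra generated by $z$ and the $1_{\oV_\theta} \otimes b_i$'s forces every joint $\psi$-moment of $b_1, b_2$ and $\iota_0(z)(\ov_\theta)$ to be independent of $\ov_\theta$. A Gram-matrix / cocycle analysis, exploiting the $G$-equivariance of $\iota_0$ and the contracting dynamics of $s$ on $\oV_\theta$, should then collapse the essential range $F_z$ to a single point, contradicting the choice of $z$ in Step 2.

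\textbf{Main obstacle.} The delicate part is precisely Step 3: the state $\psi$ need not be faithful on $\mathcal Z(\mathcal Q_0)$, so equality of all $\psi$-moments of $b_1$ and $b_2$ does \emph{not} in itself imply $b_1 = b_2$. The resolution must combine the moment rigidity from Step 1 with the full group-equivariant structure of $\iota_0$ — in particular translating $z$ along the unipotent subgroup $\oV_\theta \subset G$ to realise arbitrary essential values through $\iota_0(\mathcal Z(\mathcal M_0))$ — so that the constancy of the $\psi$-slice propagates to constancy of $\iota_0(z)$ itself as a $\mathcal Z(\mathcal Q_0)$-valued function on $\oV_\theta$, contradicting Claim \ref{claim:factor} and thus establishing Claim \ref{claim:final}.
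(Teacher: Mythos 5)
Your Step 1 is exactly the paper's opening move, and your Step 2 is a plausible variant of material already established in Claim \ref{claim:inclusion}. But the proof is not complete: the entire weight rests on Step 3, and you correctly diagnose in your "Main obstacle" paragraph that you cannot close it — equality of $\psi$-moments does not separate points of $\cZ(\cQ_0)$ when $\psi$ is not faithful, and the promised "Gram-matrix / cocycle analysis" is never supplied. As written, this is a genuine gap, not a finished argument.

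The paper closes the gap in two moves that your sketch is missing. First, it proves that the state $\psi_0 = \nu_{\oU_\theta}\otimes\psi$ \emph{is} faithful on $\cZ(\cQ_0)$: by Claim \ref{claim:inclusion} one has $\C 1_{\oV_\theta}\ovt\cZ(\cQ_0)\subset\iota_0(\cZ(\cM_0))$, so the support projection $z_0$ of $\psi_0|_{\cZ(\cQ_0)}$ pulls back to a projection $p_0\in\cZ(\cM_0)$ with $\iota_0(p_0)=1_{\oV_\theta}\otimes z_0$; the constancy of the slice established in Step 1, applied to $x=p_0$, gives $\varphi(p_0)=\psi_0(z_0)=1$, and faithfulness of $\varphi$ on $\cM$ forces $p_0=1$, hence $z_0=1$. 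Second, with $\psi_0|_{\cZ(\cQ_0)}$ faithful, the functionals $b\psi_0$ with $b\in\cZ(\cQ_0)$ are norm-dense in $\cZ(\cQ_0)_\ast$ by Hahn--Banach, so the constancy $(\id_{\oV_\theta}\otimes b\psi_0)(\iota_0(x))\in\C 1_{\oV_\theta}$ (obtained by multiplying $\iota_0(x)$ by the central element $1_{\oV_\theta}\otimes b$, which stays in $\iota_0(\cZ(\cM_0))$) upgrades to $(\id_{\oV_\theta}\otimes\rho)(\iota_0(x))\in\C 1_{\oV_\theta}$ for \emph{all} normal functionals $\rho$. The Ge--Kadison slice map theorem \cite[Theorem B]{GK95} then yields $\iota_0(\cZ(\cM_0))=\C 1_{\oV_\theta}\ovt\cZ(\cQ_0)$, contradicting Claim \ref{claim:factor}. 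If you want to salvage your outline, replace Step 3 by this faithfulness-plus-slice-theorem argument; no analysis of essential ranges or contracting dynamics is needed at this stage.
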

\begin{proof}[Proof of Claim \ref{claim:final}]
By contradiction, assume that the action $G \curvearrowright \mathcal Z(\mathcal M_0)$ is $\varphi$-preserving. Regard $\iota_0(\mathcal Z(\mathcal M_0)) \subset \iota_0(\mathcal M_0) \subset \rL^\infty(\oV) \ovt \cN_0$. The same argument as in the proof of Claim \ref{claim:nonpreserving} shows that for every $x \in \mathcal Z(\mathcal M_0)$, the bounded measurable function $\overline V \to \C : \overline v \mapsto \psi(\iota_0(x)(\overline v))$ is $\nu_{\overline V}$-almost everywhere constant. Since $(\nu_{\overline V} \otimes \psi) \circ \iota_0 = \varphi$, this means that for every $x \in \mathcal Z(\mathcal M_0)$, we have $(\id_{\overline V} \otimes \psi)(\iota_0(x)) = \varphi(x) \,  1_{\overline V}$.

Recall that $\rL^\infty(\overline V) = \rL^\infty(\overline V_\theta) \ovt  \rL^\infty(\overline U_\theta)$, $1_{\overline V} = 1_{\overline V_\theta} \otimes 1_{\overline U_\theta}$, $\id_{\overline V} = \id_{\overline V_\theta} \otimes \id_{\overline U_\theta}$. Once again, consider the splitting $\rL^\infty(\oV) \ovt \cN_0 = \rL^\infty(\overline V_\theta) \ovt (\rL^\infty(\overline U_\theta) \ovt \mathcal N_0)$. Choose a Borel probability measure $\nu_{\overline U_\theta} \in \Prob(\overline U_\theta)$ that is in the same class as the Haar measure $m_{\overline U_\theta}$. Set $\psi_0 = \nu_{\overline U_\theta} \otimes \psi \in (\rL^\infty(\overline U_\theta) \ovt \mathcal N_0)_\ast$. Combining the result of the previous paragraph with \eqref{eq:tensor-slice}, we obtain
\begin{align*}
\forall x \in \mathcal Z(\mathcal M_0), \quad (\id_{\overline V_\theta} \otimes \psi_0)(\iota_0(x)) & = (\id_{\overline V_\theta} \otimes \nu_{\overline U_\theta} \otimes \psi)(\iota_0(x)) \\
& = (\id_{\overline V_\theta} \otimes \nu_{\overline U_\theta})\left((\id_{\overline V_\theta} \otimes \id_{\overline U_\theta} \otimes \psi)(\iota_0(x)) \right) \\
& = (\id_{\overline V_\theta} \otimes \nu_{\overline U_\theta} )\left(\varphi(x) \, 1_{\overline V}\right) \\
&= \varphi(x) \, 1_{\overline V_\theta}.
\end{align*}

Observe that $ \iota_0(\mathcal Z(\mathcal M_0)) = \mathcal Z(\iota_0(\mathcal M_0))$. Recall that we have $\mathcal Z(\mathcal Q_0) \subset \rL^\infty(\overline U_\theta) \ovt \mathcal N_0$ and $\C1_{\overline V_\theta} \ovt \mathcal Z(\mathcal Q_0) \subset \iota_0(\mathcal Z(\mathcal M_0)) \subset \rL^\infty(\overline V_\theta) \ovt \mathcal Z(\mathcal Q_0)$ by Claim \ref{claim:inclusion}. Denote by $z_0 \in  \mathcal Z(\mathcal Q_0)$ the support projection of ${\psi_0}|_{ \mathcal Z(\mathcal Q_0)}$. Since $\C1_{\overline V_\theta} \ovt \mathcal Z(\mathcal Q_0) \subset \iota_0(\mathcal Z(\mathcal M_0))$, let $p_0 \in \mathcal Z(\mathcal M_0)$ be the unique projection such that $1_{\overline V_\theta} \otimes z_0 = \iota_0(p_0)$. Then the above computation applied to $x = p_0$ shows that
$$1_{\overline V_\theta} = (\id_{\overline V_\theta} \otimes \psi_0)(1_{\overline V_\theta} \otimes z_0) = (\id_{\overline V_\theta} \otimes \psi_0)(\iota_0(p_0)) = \varphi(p_0) \, 1_{\overline V_\theta}$$ 
and so $\varphi(p_0) = 1$. Since $\varphi$ is faithful, we have $p_0 = 1$ and so $z_0 = 1$. Thus, $\psi_0|_{\mathcal Z(\mathcal Q_0)}$ is faithful. 

As we mentioned before, for every $x \in \mathcal Z(\mathcal M_0)$, we have $(\id_{\overline V_\theta} \otimes \psi_0)(\iota_0(x)) \in \C1_{\overline V_\theta}$. Since $\C1_{\overline V_\theta} \ovt \mathcal Z(\mathcal Q_0) \subset \iota_0(\mathcal Z(\mathcal M_0))$, for every $x \in \mathcal Z(\mathcal M_0)$ and every $b \in \mathcal Z(\mathcal Q_0)$, we have $\iota_0(x)(1_{\overline V_\theta} \otimes b)  \in \iota_0(\mathcal Z(\mathcal M_0))$ and so $(\id_{\overline V_\theta} \otimes b\psi_0)(\iota_0(x)) = (\id_{\overline V_\theta} \otimes \psi_0)(\iota_0(x) (1_{\overline V_\theta} \otimes b) ) \in \C 1_{\overline V_\theta}$. Since $ \psi_0|_{\mathcal Z(\mathcal Q_0)}$ is faithful, Hahn--Banach theorem implies that the linear subspace $\left\{b \psi_0 \, \mid \, b \in \mathcal Z(\mathcal Q_0)\right \}$ is $\|\, \cdot \, \|$-dense in $\mathcal Z(\mathcal Q_0)_\ast$. This implies that for every $x \in \mathcal Z(\mathcal M_0)$ and every $\rho \in \mathcal Z(\mathcal Q_0)_\ast$, we have $(\id_{\overline V_\theta} \otimes \rho)(\iota_0(x)) \in \C 1_{\overline V_\theta}$. Moreover, for every $x \in \mathcal Z(\mathcal M_0)$ and every $\rho \in \rL^\infty(\overline V_\theta)_\ast$, we have $(\rho \otimes \id_{\mathcal Z(\mathcal Q_0)})(\iota_0(x)) \in \mathcal Z(\mathcal Q_0)$. Then \cite[Theorem B]{GK95} implies that $\iota_0(\mathcal Z(\mathcal M_0)) = \C 1_{\overline V_\theta} \ovt \mathcal Z(\mathcal Q_0)$ and thus $\mathcal Z(\iota_0(\mathcal M_0)) = \C 1_{\overline V_\theta} \ovt \mathcal Z(\mathcal Q_0)$. This contradicts Claim \ref{claim:factor}.
\end{proof}

By Claim \ref{claim:final}, we have that $(\mathcal Z(\mathcal M_0), \varphi)$ is an abelian ergodic $(G, \mu)$-von Neumann algebra for which the action $G \curvearrowright \mathcal Z(\mathcal M_0)$ is not $\varphi$-preserving. We can now apply \cite[Theorem 1]{NZ00} to obtain that there exist a proper parabolic subgroup $Q \subset G$ and a $G$-equivariant normal unital $\ast$-embedding $\Theta : \rL^\infty(G/Q) \to \mathcal Z(\mathcal M_0) \subset \mathcal M$ such that $\varphi \circ \Theta = \nu_Q$. This finishes the proof of Theorem \ref{thm:NZ}.
\end{proof}

\begin{rem}\label{simple case}
In order to prove Theorem \ref{thm:NZ} in the case when $G$ has trivial center, it is actually sufficient to prove it when $G$ is simple. Indeed, assume that the result is true for simple groups and assume that $(\cM,\varphi)$ is an ergodic $(G,\mu)$-von Neumann algebra, where $\mu$ is an admissible left $K$-invariant measure on $G$ and $\varphi$ is faithful. Without loss of generality, we may assume that $\mu$ splits as a product of measures on each of the simple factors of $G$. Assume that $\varphi$ is not $G$-invariant. Then it is not invariant under at least one simple factor $G_1$ of $G$. Write $G = G_1 \times G_2$. By \cite[Lemma 2.8]{BBHP20}, there exists a $G_1$-equivariant, $\varphi$-preserving conditional expectation $\cM \to \cM^{G_2}$. Thus, $\varphi$ is not $G_1$-invariant on $\cM^{G_2}$ and we may apply Theorem \ref{thm:NZ} to the ergodic action $(G_1,\mu_1) \actson (\cM^{G_2},\varphi)$, to find a proper parabolic subgroup $Q_1 < G_1$ and a $G_1$-equivariant von Neumann morphism $\mathrm L^\infty(G_1/Q_1) \to \cM^{G_2} \subset \cM$. The conclusion follows once we realize that $G_1/Q_1 \simeq G/Q$ as measurable $G$-spaces, where $Q = Q_1 \times G_2$ is a proper parabolic subgroup of $G$.
\end{rem}

\section{Proofs of the main results}\label{section:results}

In this section, we use the results obtained in Sections \ref{section:induced} and \ref{section:NZ} to prove the main results stated in the introduction. We assume that $G$ is a connected semisimple Lie group with finite center and no nontrivial compact factor, all of whose simple factors have real rank at least two. We use the notation from the introduction. Denote by $Z(G)$ the center of $G$.

\subsection{Proof of Theorem \ref{main NCNZ}}

Applying our Theorem \ref{induced stationary state} on the construction of the induced stationary state, we derive Theorem \ref{main NCNZ} from Theorem \ref{thm:NZ} and a disintegration argument as follows.

\begin{proof}[Proof of Theorem \ref{main NCNZ}]
Let $\Gamma < G$ be any lattice. Assume that $\phi$ is not $\Gamma$-invariant. Then we may find a $\Gamma$-invariant von Neumann subalgebra of $M$ with separable predual on which $\phi$ is not $\Gamma$-invariant. So we may as well assume that $M$ has separable predual.
Consider the induced von Neumann algebra $\cM = \Ind_\Gamma^G(M)=\rL^\infty(G/\Gamma) \ovt M$ with the corresponding induced $G$-action arising from a measurable section $\tau: G/\Gamma \to G$ and the corresponding measurable $1$-cocycle $c: G \times G/\Gamma \to \Gamma$. We assume that $\tau(\Gamma) = e$. As we mentioned in Subsection \ref{subsection:vN}, the induced $G$-action is ergodic since the initial $\Gamma$-action is ergodic. By Theorem \ref{induced stationary state}, there exists a $\mu$-stationary faithful normal state on $\cM$, which is not $G$-invariant. So Theorem \ref{thm:NZ} implies that there exist a proper parabolic subgroup $P \subset Q \subsetneq G$ and a $G$-equivariant unital $\ast$-homomorphism $\Theta: \rC(G/Q) \to \cM$. We simply denote  by $\sigma$ the actions $G \curvearrowright \rC(G/Q)$ and $\Gamma \curvearrowright M$.

Since $A = \rC(G/Q)$ is a separable unital $\rC^*$-algebra, \cite[Theorem IV.8.25]{Ta02} implies that there exists an essentially unique measurable field of unital $\ast$-homomorphisms $\{\pi_x : A \to M\}_{x \in G/\Gamma}$  such that $\Theta(a) = \int_{G/\Gamma}^\oplus \pi_x(a) \, {\rm d}m_{G/\Gamma}(x)$, for all $a \in A$. 
\begin{claim}\label{claim:measurability}
For every $a \in A$, the map $G \times G/\Gamma \to M : (g, x) \mapsto \sigma_{c(g,x)^{-1}}(\pi_{gx}(\sigma_g(a)))$ is measurable.
\end{claim}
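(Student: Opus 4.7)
My plan is to decompose the map as a composition of continuous and measurable pieces, with the main nontrivial step being the joint measurability of the evaluation map associated with the disintegration.

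First, I would use that the map $G \to A : g \mapsto \sigma_g(a)$ is norm continuous, since $A = \rC(G/Q)$ carries a continuous $\rC^*$-$G$-action. Combined with the continuity of the orbit map $(g, x) \mapsto gx$, this gives a continuous map
\[G \times G/\Gamma \to G/\Gamma \times A : (g, x) \mapsto (gx, \sigma_g(a)).\]
Second, I would establish that the evaluation map
\[\Phi : G/\Gamma \times A \to M : (y, a') \mapsto \pi_y(a')\]
is jointly Borel measurable, where $M$ carries the standard Borel structure associated with the ultraweak topology on its unit ball (this uses that $M$ has separable predual, which the proof of Theorem~\ref{main NCNZ} reduces to). For each fixed $a' \in A$, the map $y \mapsto \pi_y(a')$ is measurable by definition of the measurable field $\{\pi_y\}_{y \in G/\Gamma}$ coming from \cite[Theorem IV.8.25]{Ta02}. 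For each $y$ in the conull domain where the disintegration is defined, $a' \mapsto \pi_y(a')$ is norm continuous because $\pi_y$ is a contractive unital $\ast$-homomorphism. Since $A$ is a separable Banach space, hence Polish, a standard Carath\'eodory-type argument (a function on the product of a Polish space and a measurable space that is measurable in one variable and continuous in the other is jointly measurable) yields the joint measurability of $\Phi$. Composing $\Phi$ with the continuous map above shows that $(g, x) \mapsto \pi_{gx}(\sigma_g(a))$ is measurable.

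Finally, I would use that the cocycle $c : G \times G/\Gamma \to \Gamma$ is measurable and that $\Gamma$ is countable. The sets $E_\gamma = \{(g, x) : c(g, x) = \gamma\}$ for $\gamma \in \Gamma$ form a measurable countable partition of $G \times G/\Gamma$, and each automorphism $\sigma_{\gamma^{-1}}$ of $M$ is normal, hence ultraweakly continuous. We may then write
\[\sigma_{c(g,x)^{-1}}(\pi_{gx}(\sigma_g(a))) = \sum_{\gamma \in \Gamma} 1_{E_\gamma}(g, x) \, \sigma_{\gamma^{-1}}(\pi_{gx}(\sigma_g(a))),\]
where for each fixed $(g, x)$ exactly one summand is nonzero. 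Since each term is measurable, the whole expression is measurable, as required. The main obstacle here is step two, the joint measurability of the evaluation map $\Phi$, where the Polish structure of $A$ together with the norm continuity of each $\pi_y$ is essential; once this is in hand the remaining assembly is a routine manipulation of the measurable cocycle and countable decomposition.
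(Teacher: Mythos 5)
Your proposal is correct, and its overall structure coincides with the paper's: both reduce the claim to the joint measurability of the evaluation map $(y,a') \mapsto \pi_y(a')$ on $G/\Gamma \times A$, after which the cocycle and the $\Gamma$-action on $M$ are handled by routine composition (your countable partition over $E_\gamma$ is just an explicit way of composing with the measurable map $(g,x)\mapsto c(g,x)^{-1}$ and the weakly continuous action $\Gamma \times M \to M$, which is what the paper does). The only genuine difference is how the key joint measurability is established. The paper topologizes the representation space $\Rep(A,H_0)$ with the pointwise $\ast$-strong topology, observes it is Polish, checks that $x \mapsto \pi_x$ is measurable into it and that the evaluation $A \times \Rep(A,H_0) \to \mathbf B(H_0)$ is jointly continuous, and concludes by composition. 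You instead invoke a Carath\'eodory-type lemma: a map on a product that is measurable in one variable and continuous in the other, with the continuity variable ranging over a separable metric space, is jointly measurable. Both are standard and correct; the paper's route makes the continuity estimate $\|\pi(a)\xi - \pi_i(a_i)\xi\|\le \|(\pi-\pi_i)(a)\xi\| + \|a-a_i\|\,\|\xi\|$ explicit, while yours outsources the approximation to a named lemma. If you write yours up, just make sure to fix the target Borel structure on $M$ (e.g.\ work on norm-bounded balls, which are Polish for the ultraweak topology since $M$ has separable predual) so that the pointwise limit step in the Carath\'eodory argument is legitimate, and note that the a.e.\ defined field $\{\pi_y\}$ can be modified on a null set so that every $\pi_y$ is a contractive $\ast$-homomorphism.
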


\begin{proof}[Proof of Claim \ref{claim:measurability}]
The claim will follow after seeing the desired map as the composition of several measurable maps.
\begin{itemize}
\item Obviously, the maps $(g,x) \mapsto c(g,x)^{-1} \in \Gamma$ and $(g,x) \mapsto gx \in G/\Gamma$ are measurable on $G \times G/\Gamma$;
\item The action map $\Gamma \times M \to M$ is weakly continuous, hence it is measurable as well;
\item It remains to check that the map $G \times G/\Gamma \to M : (g,x) \mapsto \pi_x(\sigma_g(a))$ is measurable. Since the orbit map $G \mapsto A: g \mapsto \sigma_g(a)$ is continuous, this boils down to proving that $G/\Gamma \times A \to M: (x,a) \mapsto \pi_x(a)$ is measurable.
\end{itemize}
Denote by $\Rep(A, H)$ the set of all $\ast$-representations of $A$ on $H$ endowed with the pointwise topology of $\ast$-strong convergence in $\mathbf B(H)$. Since $A$ is separable, since $\rC^*$-representations have norm at most one, and since closed bounded balls in $\mathbf B(H)$ are $\ast$-strongly Polish, $\Rep(A,H)$ is a Polish space. By assumption, the map $G/\Gamma \to \Rep(A, H) : x \mapsto \pi_x$ is measurable. We claim that the map $A \times \Rep(A,H) \to \mathbf B(H): (a,\pi) \mapsto \pi(a)$ is continuous.  Take a net $(a_i, \pi_i) \in A \times \Rep(A,H)$, converging to an element $(a,\pi)$. For every $\xi \in H$, we have
\[\lim_i \|\pi(a)\xi - \pi_i(a_i) \xi\| \leq \lim_i \|(\pi - \pi_i)(a)\xi\| + \lim_i \|\pi_i(a - a_i)\xi\| \leq 0 + \lim_i \|a - a_i\| \cdot\|\xi\| = 0.\]
So indeed the map $(a,\pi) \mapsto \pi(a)$ is continuous and it follows by composition that the map $(x,a) \mapsto \pi_x(a) \in M$ is measurable on $G/\Gamma \times A$. This is what we wanted.
\end{proof}

%

Since $\Theta$ is $G$-equivariant and since the field $\{\pi_x : A \to M\}_{x \in G/\Gamma}$ is essentially unique, for every $g \in G$, every $a \in \rC(G/Q)$ and almost every $x \in G/\Gamma$, we have 
\begin{equation}\label{ae equiv}
\pi_x(a) = \sigma_{c(g,x)^{-1}}(\pi_{gx}(\sigma_g(a))).
\end{equation}
Since $A$ is separable, a continuity argument allows to change the quantifiers and conclude that for every $g \in G$ and almost every $x \in G/\Gamma$, Equation \eqref{ae equiv} holds simultaneously for every $a \in A$. Now, we conclude by a variation of the proof of \cite[Proposition B.5]{Zi84} that it is possible to modify the maps $\pi_x$ on a null set of $x$'s to actually have equation \eqref{ae equiv} hold everywhere. We give all the details for completeness.

Denote by $\Ball_A(0, 1)$ (resp.\ $\Ball_M(0, 1)$) the closed ball in $A$ (resp.\ $M$) of center $0$ and radius $1$ with respect to the uniform norm. Since $M$ has separable predual, $\Ball_M(0, 1)$ is a Polish space with respect to the $\ast$-strong topology and so it can be realized as a Borel subset of $[0,1]$. Using Claim \ref{claim:measurability} and Fubini's theorem, we conclude that the following subset 
\[X_0 = \left\{x \in G/\Gamma \mid  \forall a \in \Ball_A(0, 1), \text{ the map } g \mapsto \sigma_{c(g,x)^{-1}}(\pi_{gx}(\sigma_g(a))) \text{ is ess.\ constant on } G \right\}\]
is measurable and conull in $G/\Gamma$. For every $x \in X_0$ and every $a \in \Ball_A(0, 1)$, denote by $\rho_x(a) \in M$ the essential value of the measurable map $G \to \Ball_M(0, 1) : g \mapsto \sigma_{c(g,x)^{-1}}(\pi_{gx}(\sigma_g(a)))$. Choose a Borel probability measure $\eta \in \Prob(G)$ in the same class as the Haar measure. Recall that we view $\Ball_M(0, 1) \subset [0, 1]$ as a Borel subset. Then for every $a \in \Ball_A(0, 1)$, Claim \ref{claim:measurability} and Fubini's theorem imply that the map $X_0 \to \Ball_M(0, 1) : x  \mapsto \rho_x(a)$ where $\rho_x(a) = \int_G  \sigma_{c(g,x)^{-1}}(\pi_{gx}(\sigma_g(a))) \, {\rm d}\eta(g)$ is measurable and coincides $m_{G/\Gamma}$-almost everywhere with the measurable map $X_0 \to \Ball_M(0, 1) : x \mapsto \pi_x(a)$. 

For all $x \in G/\Gamma$, all $g,h \in G$ and all $a \in \Ball_A(0, 1)$, the $1$-cocycle relation for $c$ gives the formula 
\begin{equation}\label{eq:cocycle}
\sigma_{c(h,x)^{-1}c(g,hx)^{-1}}(\pi_{ghx}(\sigma_{gh}(a))) = \sigma_{c(gh,x)^{-1}}(\pi_{ghx}(\sigma_{gh}(a))).
\end{equation}
In particular, if $x \in X_0$ and $h \in G$ are fixed, then the right hand side of \eqref{eq:cocycle} is essentially constant in the variable $g \in G$, for all $a \in \Ball_A(0, 1)$. This implies that $hx \in X_0$ and 
\begin{equation}\label{ae equiv 2}\sigma_{c(h,x)^{-1}}(\rho_{hx}(\sigma_h(a))) = \rho_x(a).\end{equation}
Since $X_0 \subset G/\Gamma$ is conull and $G$-invariant, we necessarily have $X_0 = G/\Gamma$. Note that Equation \eqref{ae equiv 2} holds for every $x \in G/\Gamma$, every $h \in G$ and every $a \in A$.

Take $x = \Gamma \in G/\Gamma$ and consider the unital $\ast$-homomorphism $\theta: A \to M : a \mapsto \rho_x(a)$. We claim that $\theta$ is $\Gamma$-equivariant. 
Fix $a \in A$ and $\gamma \in \Gamma$. Note that $\gamma x = x$ and $c(\gamma,x) =  \gamma$.  Applying Equation \eqref{ae equiv 2}, we get
\[\theta(\sigma_\gamma (a)) = \rho_x(\sigma_{\gamma}(a))= \rho_{\gamma x}(\sigma_\gamma(a)) = \sigma_{c(\gamma,x)}(\rho_x(a)) = \sigma_\gamma (\theta(a)).\]
Thus, $\theta$ is $\Gamma$-equivariant. Since the state $\phi \circ \theta$ is $\mu_0$-stationary on $\rC(G/Q)$ and since $\nu_Q$ is the only $\mu_0$-stationary state on $\rC(G/Q)$ (see \cite{Fu73, GM89}), we have $\phi \circ \theta = \nu_Q$. Then $\theta$ extends to a $\Gamma$-equivariant normal unital $\ast$-embedding $\rL^\infty(G/Q) \to M$ such that $\phi \circ \theta = \nu_Q$.
%
\end{proof}

\subsection{Proof of Theorem \ref{main stationary characters}}

Before proving Theorem \ref{main stationary characters}, we prove a few preliminary results. The following useful result is essentially contained in \cite[Remark 13]{Oz16} (see also \cite[\S 7]{CP12}). 

\begin{lem}\label{lem:free}
Let $\Gamma < G$ be any irreducible lattice and $H < G$ any proper closed subgroup.  Set $Z(\Gamma) = \Gamma \cap Z(G)$. Denote by $\nu_{G/H} \in \Prob(G/H)$ a $G$-quasi-invariant Borel probability measure. Then for every $\gamma \in \Gamma \setminus Z(\Gamma)$, $\nu_{G/H} (\{y \in G/H \mid \gamma y = y\}) = 0$.
\end{lem}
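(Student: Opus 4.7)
First, using the $G$-quasi-invariance of $\nu_{G/H}$, the measure $\nu_{G/H}$ lies in the class of the push-forward of a Haar measure on $G$, so $\nu_{G/H}(\Fix(\gamma)) = 0$ if and only if the preimage
\[E = \{g \in G : g^{-1}\gamma g \in H\}\]
has Haar measure zero in $G$. The smooth map $T_\gamma(g) = g^{-1}\gamma g$ factors as a submersion onto the conjugacy class of $\gamma$, namely $G \to Z_G(\gamma) \backslash G \cong C_\gamma$, whose fibers are left $Z_G(\gamma)$-cosets. Disintegrating Haar measure along these fibers reduces the vanishing $m_G(E) = 0$ to showing that $C_\gamma \cap H$ has measure zero in $C_\gamma$.

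Second, since $H$ is a closed Lie subgroup of $G$, it is locally cut out by real-analytic equations, so $C_\gamma \cap H$ is a real-analytic subset of the connected real-analytic manifold $C_\gamma$ (connected as the continuous image of $G$ under conjugation). The standard dichotomy for analytic subsets of a connected analytic manifold gives that either $C_\gamma \subset H$, or $C_\gamma \cap H$ has strictly smaller dimension and hence measure zero. The lemma therefore reduces to ruling out $C_\gamma \subset H$.

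Third, assume for contradiction that $C_\gamma \subset H$, and let $L = \overline{\langle C_\gamma \rangle^G}$ be the closed normal subgroup of $G$ generated by $C_\gamma$, so $L \subset H$. The Lie algebra of $L^0$ is an ideal of the semisimple $\mathfrak{g}$, hence of the form $\bigoplus_{i \in I} \mathfrak{g}_i$ for some subset $I$ of simple factors. If $I = \emptyset$, then $L$ is discrete, hence central in $G$, forcing $\gamma \in Z(G) \cap \Gamma = Z(\Gamma)$, a contradiction. If $I$ indexes every factor, then $L = G \subset H$, contradicting that $H$ is proper. Otherwise, use the almost-direct decomposition $G = G_I \cdot G_{I^c}$ with $[G_I, G_{I^c}] = \{e\}$: the subgroup $L \cap G_{I^c}$ is normal in $G_{I^c}$, and its identity component lies in $L^0 \cap G_{I^c} = G_I \cap G_{I^c} \subset Z(G)$, hence is trivial. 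So $L \cap G_{I^c}$ is discrete and normal in the connected $G_{I^c}$, therefore central. Writing $\gamma = \gamma_I \gamma_{I^c}$ with $\gamma_I \in G_I \subset L$, we get $\gamma_{I^c} = \gamma_I^{-1}\gamma \in L \cap G_{I^c} \subset Z(G)$.

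Finally, pass to the adjoint group $\overline{G} = G/Z(G)$, which splits as a direct product $\overline{G} = \overline{G}_I \times \overline{G}_{I^c}$ of center-free semisimple factors, and denote by $\overline{\gamma} \in \overline{\Gamma}$ and $\overline{\gamma}_I \in \overline{G}_I$ the images of $\gamma$ and $\gamma_I$. By the previous paragraph $\overline{\gamma} \in \overline{\Gamma} \cap \overline{G}_I$, and this intersection is closed and discrete in $\overline{G}$. The density of the projection $p_I(\overline{\Gamma})$ in $\overline{G}_I$ (from the irreducibility of $\overline{\Gamma}$, using that $G$ is higher-rank) implies that the closure of the $\overline{\Gamma}$-conjugation orbit of $\overline{\gamma}$ in $\overline{G}$ is the full $\overline{G}_I$-conjugacy class of $\overline{\gamma}_I$, sitting in $\overline{G}_I \times \{\overline{e}\}$. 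As this closure must be contained in the discrete subgroup $\overline{\Gamma} \cap \overline{G}_I$, the conjugacy class must be a single point, forcing $\overline{\gamma}_I \in Z(\overline{G}_I) = \{\overline{e}\}$, so $\overline{\gamma} = \overline{e}$ and $\gamma \in Z(\Gamma)$, the desired contradiction. The main obstacle is this last step: excluding $C_\gamma \subset H$ for an arbitrary proper closed $H$ really requires the irreducibility of $\Gamma$ combined with Borel-density-style reasoning, since a proper $H$ can in general contain a proper closed normal subgroup of $G$; the first two reductions are more or less routine analytic-geometry and measure theory.
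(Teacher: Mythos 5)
Your proof is correct, but it takes a genuinely different route from the paper's. The paper's argument applies the classical fact that $\Gamma \cap N \subset Z(G)$ for any proper closed normal subgroup $N \lhd G$ directly to the normal core $N = \bigcap_{g \in G} gHg^{-1}$, using the genericity statement of \cite[Remark 13]{Oz16}: for almost every $(d+1)$-tuple $(g_1,\dots,g_{d+1}) \in G^{d+1}$ with $d = \dim G$ one has $\bigcap_{i} g_i H g_i^{-1} = N$, so if $\gamma$ fixed a positive-measure subset of $G/H$, the set $\{g \in G \mid \gamma \in gHg^{-1}\}$ would have positive Haar measure, a generic tuple could be chosen inside it, and $\gamma \in N$ would follow. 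You replace this dimension-counting input by real-analyticity: the set $E = \{g \in G \mid g^{-1}\gamma g \in H\}$ is an analytic subset of the connected analytic manifold $G$ (preimage of the closed analytic submanifold $H$ under the analytic map $g \mapsto g^{-1}\gamma g$), hence is either Haar-null or all of $G$; in the latter case the closed normal subgroup generated by $C_\gamma$ is a proper closed normal subgroup of $G$ containing $\gamma$, and you then re-derive the classical fact by hand via the decomposition into simple factors and density of projections. Both arguments funnel into the same group-theoretic core, and your version is self-contained where the paper's outsources the measure-theoretic step to \cite{Oz16}, at the cost of length. Three cosmetic points: it is cleaner to run the analytic dichotomy directly on $E \subset G$ rather than on $C_\gamma$, which need not be closed or embedded in $G$ (one should really work on $Z_G(\gamma)\backslash G$, as you implicitly do); the relevant fact is that a proper analytic subset of a connected analytic manifold is Lebesgue-null, rather than a statement about dimension; and in the last step the closure of the $\overline{\Gamma}$-conjugation orbit \emph{contains} (rather than equals) the $\overline{G}_I$-conjugacy class of $\overline{\gamma}_I$, which is all you need to conclude that this connected set is a single point.
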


\begin{proof}
We will use the classical fact that the intersection of the irreducible lattice $\Gamma$ with any proper normal closed subgroup of $G$ is contained in the center of $G$. To see this general fact, we can mod out by the center of $G$ to reduce to the center-free case. Then if $N$ is a proper normal closed subgroup of $G$, we have in fact a splitting $G \simeq N \times N'$. Now $\Gamma \cap N$ is normalized by $\Gamma$, while it commutes with $N'$. So it is also normalized by the projection of $\Gamma$ inside $N \simeq G/N'$, which is dense in $N$. Since $\Gamma \cap N$ is closed in $N$, it must in fact be normalized by the whole of $N$, by continuity. Since $\Gamma$ is discrete, $\Gamma \cap N$ must be central in $N$, as claimed.

We will apply this general fact to $N = \bigcap_{g \in G} gHg^{-1}$, which is indeed a proper closed normal subgroup of $G$. 
Let $\gamma \in \Gamma$ be any element such that $\nu_{G/H} (\{gH \in G/H \mid \gamma gH = gH\}) > 0$. Let $d = \dim(G)$. By \cite[Remark 13]{Oz16}, for almost every $(g_1, \dots, g_{d + 1}) \in G^{d + 1}$, we have $\bigcap_{i = 1}^{d + 1} g_i H g_i^{-1} = N$. This implies that $\gamma \in \Gamma \cap N$ and so $\gamma \in Z(\Gamma)$. 
\end{proof}

Observe that when $G$ has trivial center, Lemma \ref{lem:free} implies that the action $\Gamma \curvearrowright (G/H, \nu_{G/H})$ is essentially free.

The following useful lemma is inspired by observations due to Hartman--Kalantar (see \cite[Example 4.11]{HK17}) and Haagerup \cite[Lemma 3.1]{Ha15}.

\begin{lem}\label{HK dirac}
Let $\Gamma < G$ be any irreducible lattice and $\mu_0 \in \Prob(\Gamma)$ any Furstenberg probability measure as in the introduction.  Set $Z(\Gamma) = \Gamma \cap Z(G)$. Let $B$ be any unital $\rC^*$-algebra and $\pi: \Gamma \to \cU(B)$ any unitary representation. Consider the conjugation action $\sigma : \Gamma \actson B$ defined by $\sigma_\gamma = \Ad(\pi(\gamma))$ for every $\gamma \in \Gamma$.

Assume that there exists a proper parabolic subgroup $P \subset Q \subsetneq G$ and a $\Gamma$-equivariant unital $\ast$-homomorphism $\theta: \rC(G/Q) \to B$. Let $\phi \in \mathcal S(B)$ be any $\mu_0$-stationary state. Then for every $\gamma \in \Gamma \setminus Z(\Gamma)$, we have $\phi(\pi(\gamma)) = 0$. 
\end{lem}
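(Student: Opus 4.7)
The plan is to exploit the disintegration of $\phi$ provided by the boundary theory of Theorem \ref{thm:poisson-map}, combined with the multiplicativity of the resulting fiber states on the subalgebra $\theta(\rC(G/Q))$, to force each fiber to kill $\pi(\gamma)$.

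First I would disintegrate. Since $\phi$ is a $\mu_0$-stationary state on the $\Gamma$-$\rC^*$-algebra $B$ (for the conjugation action), Theorem \ref{thm:poisson-map} applied to the compact convex affine $\Gamma$-space $\mathcal{S}(B)$ with the distinguished stationary element $\phi$ produces a measurable $\Gamma$-equivariant boundary map $\beta_\phi:G/P \to \mathcal{S}(B)$, $w \mapsto \phi_w$, satisfying $\phi(b) = \int_{G/P}\phi_w(b)\,\dd\nu_P(w)$ for every $b \in B$. The composition $w \mapsto \phi_w \circ \theta$ is then a $\Gamma$-equivariant measurable map $G/P \to \Prob(G/Q)$ whose barycenter is the $\mu_0$-stationary measure $\phi\circ\theta$. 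But $\nu_Q$ is the unique $\mu_0$-stationary measure on $G/Q$ (here the Furstenberg hypothesis is essential, via \cite{Fu73, GM89}), so $\phi \circ\theta = \nu_Q$. Essential uniqueness in Theorem \ref{thm:poisson-map} then forces $\phi_w \circ \theta = \delta_{\pi_Q(w)}$ for $\nu_P$-almost every $w$, where $\pi_Q : G/P \to G/Q$ is the natural projection, because $w \mapsto \delta_{\pi_Q(w)}$ is also a $\Gamma$-equivariant measurable map $G/P \to \Prob(G/Q)$ whose barycenter is $(\pi_Q)_\ast \nu_P = \nu_Q$.

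Next I would extract vanishing in each fiber by a GNS argument. Fix a typical $w$ with $\phi_w \circ \theta = \delta_{\pi_Q(w)}$ and let $(\pi_{\phi_w}, H_w, \xi_w)$ be the GNS triple of $\phi_w$. Multiplicativity of $\phi_w$ on the commutative subalgebra $\theta(\rC(G/Q))$ translates, via the standard identity $\|\pi_{\phi_w}(\theta(f))\xi_w - f(\pi_Q(w))\xi_w\|^2 = \phi_w(\theta(f^2)) - f(\pi_Q(w))^2 = 0$, into the eigenvalue relation
\[ \pi_{\phi_w}(\theta(f))\, \xi_w = f(\pi_Q(w))\,\xi_w, \qquad f \in \rC(G/Q).\]
The covariance $\pi(\gamma)\,\theta(f)\,\pi(\gamma)^{-1} = \theta(\gamma\cdot f)$, applied to $U := \pi_{\phi_w}(\pi(\gamma))$, then yields $\pi_{\phi_w}(\theta(f))\, U^*\xi_w = f(\gamma^{-1}\pi_Q(w))\,U^*\xi_w$. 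So $\xi_w$ and $U^*\xi_w$ are joint eigenvectors for the commuting family $\{\pi_{\phi_w}(\theta(f))\}$ with respective joint spectra $\pi_Q(w)$ and $\gamma^{-1}\pi_Q(w)$. If these two points of $G/Q$ are distinct, Urysohn's lemma produces $f$ separating them, forcing $\xi_w \perp U^*\xi_w$ and hence $\phi_w(\pi(\gamma)) = \langle U\xi_w,\xi_w\rangle = 0$.

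Finally I would integrate. By Lemma \ref{lem:free}, the fixed-point set $\Fix(\gamma) \subset G/Q$ is $\nu_Q$-null for every $\gamma \in \Gamma\setminus Z(\Gamma)$, so $\pi_Q^{-1}(\Fix(\gamma))$ is $\nu_P$-null since $\nu_Q = (\pi_Q)_\ast\nu_P$. Combining with the previous step, $\phi_w(\pi(\gamma)) = 0$ for $\nu_P$-a.e.\ $w$, and integrating over $(G/P,\nu_P)$ gives $\phi(\pi(\gamma)) = 0$, as desired. The step I expect to require the most care is identifying $\phi_w \circ \theta = \delta_{\pi_Q(w)}$ almost everywhere rather than merely up to a weak-$*$ negligible ambiguity (the discussion following Theorem \ref{thm:poisson-map} on everywhere-defined versus almost-everywhere-defined boundary maps should suffice). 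The rest is essentially a clean eigenvector calculation combined with the freeness Lemma \ref{lem:free}.
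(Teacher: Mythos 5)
Your proposal is correct and follows essentially the same route as the paper: disintegrate $\phi$ over the Poisson boundary $(G/P,\nu_P)$, use uniqueness of the $\mu_0$-stationary measure on $G/Q$ and essential uniqueness of boundary maps to identify $\phi_w\circ\theta$ with $\delta_{p_Q(w)}$ almost everywhere, invoke Lemma \ref{lem:free} to separate $p_Q(w)$ from $\gamma p_Q(w)$, and integrate. The only differences are cosmetic or routine: the paper first passes to the separable $\rC^*$-subalgebra generated by $\pi(\Gamma)$ and $\theta(\rC(G/Q))$ so that Theorem \ref{thm:poisson-map} applies (you should do the same, since $B$ is not assumed separable), and where you run a GNS eigenvector computation the paper equivalently observes that $\theta(\rC(G/Q))$ lies in the multiplicative domain of $\phi_w$.
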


\begin{proof}
Denote by $A \subset B$ the {\em separable} unital $\rC^*$-subalgebra generated by $\pi(\Gamma)$ and $\theta(\rC(G/Q))$ and observe that $A \subset B$ is globally $\Gamma$-invariant under the action $\sigma$. Since $A$ is separable and since $\phi|_A \in \mathcal S(A)$ is $\mu_0$-stationary, we may apply Theorem \ref{thm:poisson-map} to obtain a $\Gamma$-equivariant measurable map $\beta_\phi : G/P \to \mathcal S(A): w \mapsto \phi_{w} $ which satisfies $\phi = \int_{G/P} \phi_w \, {\rm d}\nu_P(w)$. Denote by $p_Q: G/P \to G/Q$ the projection map that moreover satisfies $(p_Q)_\ast \nu_P = \nu_Q$. Since the state $\phi \circ \theta$ is $\mu_0$-stationary on $\rC(G/Q)$ and since $\nu_Q$ is the only $\mu_0$-stationary state on $\rC(G/Q)$ (see \cite{Fu73, GM89}), we have $\phi \circ \theta = \nu_Q$. Since $(G/Q,\nu_Q)$ is a $(\Gamma, \mu_0)$-boundary in the sense of Furstenberg, we even deduce that there exists a conull measurable subset $\Omega_1 \subset G/P$ such that for every $w \in \Omega_1$, the state $\phi_w \circ \theta \in \mathcal S(\rC(G/Q))$ is equal to the Dirac state $\delta_{p_Q(w)}$ (see \cite[Theorem 2.14]{BS04}). Since $\delta_{p_Q(w)} \in \mathcal S(\rC(G/Q))$ is multiplicative, we infer that for all $w \in \Omega_1$, $\theta(\rC(G/Q))$ lies in the multiplicative domain of $\phi_w$ (see e.g.\ \cite[Proposition 1.5.7]{BO08}).

Fix $\gamma \in \Gamma \setminus Z(\Gamma)$. Since $\nu_Q(\{y \in G/Q \mid \gamma y = y\}) = 0$ by Lemma \ref{lem:free}, we may find a conull measurable subset $\Omega_2 \subset G/P$ such that $\gamma p_Q( w) \neq p_Q(w)$ for all $w \in \Omega_2$. Fix $w \in \Omega_1 \cap \Omega_2$, set $y = p_Q(w) \in G/Q$ and choose a continuous function $f \in \rC(G/Q)$ such that $f(y) = 1$ and $f(\gamma y) = 0$. We compute
\[\phi_w(\pi(\gamma)) = f(y) \, \phi_w(\pi(\gamma)) = \phi_w(\theta(f)\pi(\gamma)) = \phi_w(\pi(\gamma) \theta(\sigma_\gamma^{-1}(f))) = \phi_w(\pi(\gamma)) \, f(\gamma y) = 0.\]
By integrating with respect to $w \in G/P$, we obtain $\phi(\pi(\gamma)) = 0$. 
\end{proof}


We are now ready to prove Theorem \ref{main stationary characters}.

\begin{proof}[Proof of Theorem \ref{main stationary characters}]
Let $\Gamma < G$ be any irreducible lattice. Observe that the set $\mathcal C(\Gamma, \mu_0)$ of all $\mu_0$-characters on $\Gamma$ is a nonempty compact convex subset of the space $\ell^\infty(\Gamma)$ endowed with the weak$^*$-topology. Using Krein--Milman's theorem, in order to prove Theorem \ref{main stationary characters}, it suffices to show that any extreme point in $\mathcal C(\Gamma, \mu_0)$ is conjugation invariant.

Let $\varphi \in \mathcal C(\Gamma, \mu_0)$ be any extreme point. Denote by $(\pi_0,H_0,\xi_0)$ the GNS triple corresponding to $\varphi$ and set $M = \pi_0(\Gamma)\dpr$. Recall that $\varphi(\gamma) = \langle \pi_0(\gamma)\xi_0,\xi_0\rangle$ for every $\gamma \in \Gamma$. We denote by $\phi$ the normal state $\langle \, \cdot \, \xi_0, \xi_0\rangle$ on $M$ and observe that $\varphi = \phi \circ \pi_0$. Denote by $\sigma : \Gamma \curvearrowright M$ the conjugation action defined by $\sigma_\gamma = \Ad(\pi_0(\gamma))$ for every $\gamma \in G$. Then $\phi \in M_\ast$ is a normal $\mu_0$-stationary state. Observe that $\varphi \in \mathcal C(\Gamma, \mu_0)$ is conjugation invariant if and only if $\phi \in M_\ast$ is $\Gamma$-invariant.

First, we prove that $\phi \in M_\ast$ is faithful. Indeed, let $x \in M$ be any element such that $\phi(x^*x) = 0$. Since $\mu_0 \ast \phi = \phi$, we obtain
\[\sum_{\gamma \in \Gamma} \mu_0(\gamma)\,\Vert x\pi_0(\gamma)\xi_0\Vert^2 = \sum_{\gamma \in \Gamma} \mu_0(\gamma)\,\phi(\pi_0(\gamma)^*x^*x\pi_0(\gamma)) = (\mu_0 \ast \phi)(x^*x) = \phi(x^*x) = 0.\]
This implies that $ x\pi_0(\gamma)\xi_0 = 0$ for all $\gamma \in \supp(\mu_0) = \Gamma$. Since $\xi_0$ is $\pi_0(\Gamma)$-cyclic, we conclude that $x \xi = 0$ for all $\xi \in H_0$ and so $x = 0$. Thus, $\phi \in M_\ast$ is a faithful normal state. 


Next, we prove that the action $\Gamma \curvearrowright M$ is ergodic. We prove the contrapositive statement. Assume that $M^\Gamma = \cZ(M)$ is nontrivial. Then it contains a nontrivial projection $p \in \cZ(M)$. Since $\phi$ is faithful,  $\phi(p) \notin \{0,1\}$. Define the $\mu_0$-stationary normal states $\phi_1: x \mapsto  \frac{1}{\phi(p)}\phi(xp)$ and $\phi_2: x \mapsto \frac{1}{\phi(1 - p)}\phi(x (1 - p))$, for $x \in M$. We have $\phi = \phi(p) \, \phi_1 + \phi(1-p) \, \phi_2$. We have $\phi_1(p) = 1$ and $\phi_2(p) = 0$ so that $\phi_1 \neq \phi_2$. Define the $\mu_0$-characters $\varphi_1 = \phi_1\circ\pi_0 \in \mathcal C(\Gamma, \mu_0)$ and $\varphi_2 = \phi_2\circ\pi_0 \in \mathcal C(\Gamma, \mu_0)$. We have $\varphi = \phi(p) \, \varphi_1 + \phi(1 - p) \, \varphi_2$. Since $\phi_1$ and $\phi_2$ are normal, since $\phi_1 \neq \phi_2$ and since the linear span of $\pi_0(\Gamma)$ is ultraweakly dense in $M$, there exists $\gamma \in \Gamma$ such that $\varphi_1 (\gamma) = \phi_1(\pi_0(\gamma)) \neq \phi_2(\pi_0(\gamma)) = \varphi_2(\gamma)$. This implies that $\varphi_1 \neq \varphi_2$ and hence $\varphi$ is not an extreme point in $\mathcal C(\Gamma, \mu_0)$. This shows that the action $\Gamma \curvearrowright M$ is ergodic.

Then the action $\Gamma \actson M$ is ergodic and $\phi \in M_\ast$ is a $\mu_0$-stationary faithful normal state. Assume by contradiction that $\phi$ is not $\Gamma$-invariant. By Theorem \ref{main NCNZ}, there exist a proper parabolic subgroup $P \subset Q \subsetneq G$ and a $\Gamma$-equivariant unital $\ast$-homomorphism $\theta: \rC(G/Q) \to M$. By Lemma \ref{HK dirac}, we obtain that $\varphi = \phi \circ \pi_0$ is supported on the center of $\Gamma$, hence is conjugation invariant. This further implies that $\phi$ is $\Gamma$-invariant, contradicting our assumption.
\end{proof}

\subsection{Proof of Theorem \ref{main peterson}}
Assume that $G$ has trivial center. Let $\Gamma < G$ be any irreducible lattice. Let $\varphi$ be any extreme point in the space of characters of $\Gamma$. Denote by $(\pi_0,H_0,\xi_0)$ the GNS triple corresponding to $\phi$. As explained in the proof of Theorem \ref{main stationary characters}, the von Neumann algebra $M = \pi_0(\Gamma)\dpr$ is a finite factor and the vector state $\phi = \langle \, \cdot \, \xi_0, \xi_0\rangle \in M_\ast$ is the canonical faithful normal trace. Denote by $J: H_0 \to H_0 : x\xi_0 \mapsto x^*\xi_0$ the canonical anti-unitary. We have $JMJ = M' \cap \mathbf B(H_0)$. The Hilbert space $H_0$ is naturally endowed with an $M$-$M$-bimodule structure given by $x \eta y = x Jy^*J \eta$ for all $x, y \in M$ and all $\eta \in H_0$. Note that we have $x \xi_0 = J x^* J \xi_0$ for every $x \in M$.

The key aspect of Peterson's approach \cite{Pe14} is to study the {\em noncommutative} Poisson boundary, defined as the fixed-point von Neumann algebra $\cB = \left( \rL^\infty(G/P) \ovt \mathbf B(H_0) \right)^\Gamma$ with respect to the $\Gamma$-action $\alpha: \Gamma \actson \rL^\infty(G/P,\nu_P) \ovt \mathbf B(H_0)$ defined by $\alpha_\gamma =\sigma_\gamma \otimes \Ad(J\pi_0(\gamma)J)$ for every $\gamma \in \Gamma$. Here, $\sigma: \Gamma \actson \rL^\infty(G/P,\nu_P)$ is the natural translation action. Alternatively, we can view $\cB$ as the von Neumann algebra of all essentially bounded measurable functions $f : G/P \to \mathbf B(H_0)$, modulo equality $\nu_P$-almost everywhere, that satisfy $f(\gamma w) = \Ad(J\pi_0(\gamma)J)(f(w))$ for every $\gamma \in \Gamma$ and $\nu_P$-almost every $w \in G/P$. Note that $\C 1 \ovt M \subset \mathcal B$ corresponds to the von Neumann subalgebra of all essentially constant measurable functions $f : G/P \to M$. Recall that $\mathcal B$ is amenable (see \cite[Section 2]{CP13}).

Define the conjugation action $\beta : \Gamma \curvearrowright  \rL^\infty(G/P) \ovt \mathbf B(H_0)$ by $\beta_\gamma = \Ad(1 \otimes \pi_0(\gamma))$ for every $\gamma \in \Gamma$. Then $\beta$ commutes with $\alpha$ and so $\cB$ is globally $\Gamma$-invariant under the action $\beta$. The next lemma will allow us to apply Theorem \ref{main NCNZ}.

\begin{lem}\label{lem:beta}
Keep the same notation as above. The following assertions hold true.
\begin{itemize}
\item [$(\rm i)$] The action $\beta : \Gamma \actson \cB$ is ergodic.
\item [$(\rm ii)$] The normal state $\Phi : \cB \to \C : f \mapsto \int_{G/P} \langle f(w)\xi_0,\xi_0\rangle \, {\rm d}\nu_P(w)$ is $\mu_0$-stationary.
\end{itemize}
\end{lem}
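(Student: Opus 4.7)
For part (ii), the plan is to exploit the $\alpha$-invariance of elements of $\cB$ together with the identities $J\xi_0 = \xi_0$ and $Jx\xi_0 = x^\ast\xi_0$, which together yield $\pi_0(\gamma)\xi_0 = J\pi_0(\gamma)^\ast J\xi_0$. For any $f \in \cB$ and any $\gamma \in \Gamma$, a direct computation gives
\[
\langle f(w)\pi_0(\gamma)\xi_0, \pi_0(\gamma)\xi_0\rangle = \langle (J\pi_0(\gamma)J)f(w)(J\pi_0(\gamma)^\ast J)\xi_0, \xi_0\rangle = \langle f(\gamma w)\xi_0, \xi_0\rangle,
\]
where the last equality uses the explicit form $f(\gamma w) = (J\pi_0(\gamma)J)f(w)(J\pi_0(\gamma)^\ast J)$ of the $\alpha$-equivariance relation defining $\cB$. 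Hence
\[
\Phi(\beta_\gamma^{-1}(f)) = \int_{G/P} \langle f(\gamma w)\xi_0,\xi_0\rangle \dd\nu_P(w) = \int_{G/P}\langle f(w)\xi_0,\xi_0\rangle \dd(\gamma_\ast\nu_P)(w),
\]
and averaging over $\gamma$ against $\mu_0$, together with $\mu_0 \ast \nu_P = \nu_P$, proves the $\mu_0$-stationarity of $\Phi$.

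For part (i), I would first reduce to a statement about $\Gamma$-equivariant maps $G/P \to M$. Since $\beta_\gamma = \Ad(1 \otimes \pi_0(\gamma))$, an element $f \in \cB$ is $\beta$-invariant if and only if $f(w) \in M' = JMJ$ for $\nu_P$-a.e.\ $w$. The map $\Psi : M \to M'$, $\Psi(x) = JxJ$, is a normal unital $\ast$-isomorphism (in the tracial standard form of $M$, $JxJ$ coincides with right-multiplication by $x^\ast$ on the dense subspace $M\xi_0$) that intertwines $\sigma_\gamma$ with $\Ad(J\pi_0(\gamma)J)$. Applying $\id \otimes \Psi^{-1}$ on the second tensor factor thus yields a $\ast$-isomorphism
\[
\cB^\beta \xrightarrow{\cong} (\rL^\infty(G/P,\nu_P) \ovt M)^{\sigma \otimes \sigma},
\]
whose right-hand side is the algebra of bounded measurable maps $\tilde f : G/P \to M$ satisfying $\tilde f(\gamma w) = \sigma_\gamma(\tilde f(w))$ almost everywhere.

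To prove that such $\tilde f$ must be essentially a constant scalar, I would apply Theorem \ref{thm:poisson-map} with $\cC = \Ball_M(0,\Vert\tilde f\Vert_\infty)$, a compact convex affine $\Gamma$-space (since $\Gamma$ acts isometrically on $M_\ast$ and $M = (M_\ast)^\ast$). The theorem asserts that $\tilde f$ is essentially determined by its barycenter $c = \int_{G/P} \tilde f \dd\nu_P \in M$, and that $c$ is $\mu_0$-stationary: $Tc = c$, where $T(x) := \sum_\gamma \mu_0(\gamma)\sigma_\gamma(x)$. The critical step is to upgrade this stationarity to genuine $\Gamma$-invariance. Since $\phi$ is the canonical trace, it is $\Gamma$-invariant, hence $T$-invariant. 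For a self-adjoint $c$ with $Tc = c$, Kadison--Schwarz gives $T(c^2) \geq T(c)^2 = c^2$; applying $\phi$ yields $\phi(T(c^2) - c^2) = 0$, and faithfulness of $\phi$ forces $T(c^2) = T(c)^2$. The equality case in Kadison--Schwarz for a convex combination of $\ast$-automorphisms (by strict convexity of $y \mapsto y^2$ on self-adjoints) then forces $\sigma_\gamma(c) = c$ for every $\gamma \in \supp(\mu_0) = \Gamma$; splitting into real and imaginary parts handles general $c$.

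Finally, since $\varphi$ is an extreme character, $M = \pi_0(\Gamma)\dpr$ is a factor, hence $M^\Gamma = M \cap \pi_0(\Gamma)' = \cZ(M) = \C 1$, so $c \in \C 1$. The constant map $w \mapsto c$ is itself $\Gamma$-equivariant with barycenter $c$, so the essential uniqueness clause of Theorem \ref{thm:poisson-map} forces $\tilde f$ to be essentially equal to $c$. This proves $\cB^\beta = \C 1$. The main delicate point in the argument is the Kadison--Schwarz step converting $\mu_0$-stationarity of elements of $M$ into pointwise $\Gamma$-invariance; this is where the trace property of $\phi$ enters essentially and bridges the Poisson-boundary input (classification of equivariant maps by their barycenters) with the factoriality conclusion.
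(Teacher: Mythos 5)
Your proof of part $(\rm ii)$ is identical to the paper's: the same use of $\pi_0(\gamma)\xi_0 = J\pi_0(\gamma)^*J\xi_0$, the $\alpha$-equivariance of $f$, and the $\mu_0$-stationarity of $\nu_P$. Part $(\rm i)$, however, takes a genuinely different route. The paper also identifies $\cB^{\beta(\Gamma)}$ with the $\Gamma$-equivariant measurable maps $G/P \to JMJ$, but then invokes Bader--Furman's metric ergodicity theorem \cite[Theorem 2.5]{BF14}: the conjugation action $\Ad(J\pi_0(\cdot)J)$ is isometric for the trace metric $d(JxJ,JyJ)=\phi((y-x)^*(y-x))^{1/2}$ on the separable metric space $JMJ$, so any such equivariant map is essentially constant with value in the fixed points $J\cZ(M)J=\C1$. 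You instead stay entirely within the paper's own convex-compact machinery (Theorem \ref{thm:poisson-map} applied to the weak$^*$-compact ball of $M$), reduce to showing that a $\mu_0$-stationary element $c\in M$ is $\Gamma$-fixed, and prove this by the variance identity $T(c^2)-T(c)^2=\tfrac12\sum_{\gamma,\gamma'}\mu_0(\gamma)\mu_0(\gamma')\bigl(\sigma_\gamma(c)-\sigma_{\gamma'}(c)\bigr)^2$ together with trace-preservation and faithfulness of $\phi$; since $\supp(\mu_0)=\Gamma$ this forces $\sigma_\gamma(c)=c$ for all $\gamma$, and factoriality gives $c\in\C1$. Your argument is correct and has the merit of being self-contained (no appeal to \cite{BF14}) and of isolating exactly where the trace enters; the paper's argument is shorter and applies verbatim to any isometric action on a separable metric space, not just tracial conjugation actions. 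One cosmetic point: $x\mapsto JxJ$ is conjugate-linear, so for a genuine (linear) identification of $\cB^{\beta(\Gamma)}$ with equivariant maps into $M$ you should use $x\mapsto Jx^*J$; this does not affect the argument, since the intertwining relation $\tilde f(\gamma w)=\sigma_\gamma(\tilde f(w))$ and the identification of fixed points come out the same.
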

\begin{proof}
$(\rm i)$ Denote by $\cB^{\beta(\Gamma)} = \{f \in \mathcal B \mid \beta_\gamma(f) = f, \forall \gamma \in \Gamma\}$ the fixed-point von Neumann subalgebra of $\mathcal B$ with respect to the action $\beta$. Since $JMJ = M' \cap \mathbf B(H_0)$, by construction,  $\cB^{\beta(\Gamma)}$ is the von Neumann algebra of all  essentially bounded measurable functions $f : G/P \to JMJ$, modulo equality $\nu_P$-almost everywhere, that satisfy $f(\gamma w) = \Ad(J\pi_0(\gamma)J)(f(w))$ for every $\gamma \in \Gamma$ and $\nu_P$-almost every $w \in G/P$. Since $JMJ$ is a finite von Neumann algebra with separable predual, we may view it as a separable metric space with respect to the distance $d : JMJ \times JMJ \to \R_{\geq 0}$ defined by $d(JxJ, JyJ) = \phi((y - x)^*(y - x))^{1/2}$ for all $x, y \in M$. Moreover, the  action $\Ad (J \pi_0(\, \cdot \, ) J) : \Gamma \curvearrowright (JMJ, d)$ is isometric since the map $JMJ \to \C : JxJ \mapsto \phi(x^* )$ is a (faithful normal) trace on $JMJ$. Then \cite[Theorem 2.5]{BF14} and the fact that $M$ is a factor imply that $\mathcal B^{\beta(\Gamma)} \subset \C 1$. Thus, the action $\beta : \Gamma \actson \cB$ is ergodic.

$(\rm ii)$ For every $f \in \cB$, we have
\begin{align*}
 \sum_{\gamma \in \Gamma} \mu_0(\gamma) \Phi(\beta_\gamma^{-1}(f)) & = \sum_{\gamma \in \Gamma} \mu_0(\gamma) \int_{G/P}\langle f( w)\pi_0(\gamma)\xi_0,\pi_0(\gamma)\xi_0\rangle \, {\rm d}\nu_P(w) \\
& = \sum_{\gamma \in \Gamma} \mu_0(\gamma) \int_{G/P} \langle f(w)J\pi_0(\gamma)^*J\xi_0,J\pi_0(\gamma)^*J\xi_0\rangle \, {\rm d}\nu_P(w) \\
& =  \sum_{\gamma \in \Gamma} \mu_0(\gamma) \int_{G/P} \langle f(\gamma w)\xi_0,\xi_0\rangle \, {\rm d}\nu_P(w) \quad (\text{since } f \in \mathcal B)\\ 
&= \int_{G/P} \langle f( w)\xi_0,\xi_0\rangle \, {\rm d}\nu_P(w) \quad (\text{since } \nu_P \text{ is } \mu_0\text{-stationary})\\ 
& = \Phi(f).
\end{align*}
Thus, $\Phi$ is a $\mu_0$-stationary state. 
\end{proof}

\begin{proof}[Proof of Theorem \ref{main peterson}]
Let $\varphi$ be any extreme point in the space of characters of $\Gamma$. Keep the same notation as above. By Lemma \ref{lem:beta} and Theorem \ref{main NCNZ}, the following dichotomy holds:
\begin{enumerate}
\item Either $\Phi$ is $\Gamma$-invariant with respect to $\beta$. 
\item Or there exist a proper parabolic subgroup $P \subset Q \subsetneq G$ and a $\Gamma$-equivariant map $\Theta : \rC(G/Q) \to \cB$.
\end{enumerate}

If $(2)$ holds, Lemma \ref{HK dirac} shows that $\varphi = \phi \circ \pi_0$ must be the Dirac map at the identity. If $(1)$ holds, we show that that $M$ is a finite dimensional factor and hence $\varphi$ is almost periodic. Since $\Gamma$ has property (T), $M$ has property (T) in the sense of \cite{CJ83} so it suffices to prove that $M$ is amenable. As we saw, $\cB$ is amenable so we only need to verify that $\C1 \ovt M = \cB$.

For every $f \in \cB$ and every $\gamma \in \Gamma$, we have
\begin{align*}
\Phi(\beta_\gamma^{-1}(f)) & = \int_{G/P}\langle f( w)\pi_0(\gamma)\xi_0,\pi_0(\gamma)\xi_0\rangle \, {\rm d}\nu_P(w) \\
&=  \int_{G/P} \langle f(w)J\pi_0(\gamma)^*J\xi_0,J\pi_0(\gamma)^*J\xi_0\rangle \, {\rm d}\nu_P(w) \\
&=\int_{G/P} \langle f(\gamma w)\xi_0,\xi_0\rangle \, {\rm d}\nu_P(w).
\end{align*}
Since this quantity does not depend on $\gamma \in \Gamma$, the bounded $\mu_0$-harmonic function $\Gamma \to \C : \gamma \mapsto \int_{G/P} \langle f(\gamma w)\xi_0,\xi_0\rangle \, {\rm d}\nu_P(w)$ is constant. Since $(G/P, \nu_P)$ is the $(\Gamma, \mu_0)$-Poisson boundary,  Theorem \ref{thm:harmonicity} implies that the function $G/P \to \C : w \mapsto\langle f( w)\xi_0,\xi_0\rangle$ is $\nu_P$-almost everywhere constant. Since $\C1 \ovt M \subset \cB$, we deduce that for all $f \in \cB$ and all $a,b \in M$,  $(1 \otimes b^*) f (1 \otimes a) \in \mathcal B$ and so the measurable function $G/P \to \C : w \mapsto \langle f(w)a\xi_0,b\xi_0\rangle$ is essentially constant. By separability of $H_0$ and density of $M\xi_0$ in $H_0$, we conclude that $f$ is essentially constant. Since $f(\gamma w) = \Ad(J\pi_0(\gamma)J)(f(w))$ for every $\gamma \in \Gamma$ and $\nu_P$-almost every $w \in G/P$, we conclude that the unique essential value of $f$ commutes with $JMJ$ and so lies in $M$. This shows that $f \in \C1 \ovt M$. Thus, we have $\cB = \C1 \ovt M$.
\end{proof}

\subsection{Proof of Corollary \ref{main rep}}
Before proving Corollary \ref{main rep}, we make the following easy observation regarding unitary representations of property (T) groups. 

\begin{lem}\label{lem:(T)}
Let $\Lambda$ be any countable infinite group with property {\em (T)}. Then for any $\Lambda$-unitary representations $\pi$ and  $\rho$, if $\pi$ is weakly mixing and if $\rho$ is weakly contained in $\pi$, then $\rho$ is also weakly mixing.
\end{lem}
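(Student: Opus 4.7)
The plan is to argue by contradiction. Suppose $\pi$ is weakly mixing and $\rho \prec \pi$, yet $\rho$ admits a nonzero finite-dimensional subrepresentation. After passing to an irreducible summand, we may fix a finite-dimensional irreducible subrepresentation $\sigma \leq \rho$. Since weak containment is transitive and is inherited by subrepresentations, we have $\sigma \prec \rho \prec \pi$, hence $\sigma \prec \pi$.

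Next, I would use the fact that weak containment is preserved under tensoring with a fixed unitary representation (this is standard, and follows immediately from the characterization in terms of matrix coefficients). Applying this with $\bar\sigma$ yields $\sigma \otimes \bar\sigma \prec \pi \otimes \bar\sigma$. Because $\sigma$ is finite-dimensional, $\sigma \otimes \bar\sigma$ identifies with the conjugation representation on the (finite-dimensional) space $\mathrm{End}(H_\sigma)$, and the identity operator is a nonzero $\Lambda$-invariant vector. Thus $1_\Lambda \leq \sigma \otimes \bar\sigma$, and by the previous weak containment we conclude
\[ 1_\Lambda \prec \pi \otimes \bar\sigma. \]

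At this point I would invoke property (T) of $\Lambda$: the trivial representation is isolated in $\widehat \Lambda$, so $1_\Lambda \prec \pi \otimes \bar\sigma$ forces $1_\Lambda \leq \pi \otimes \bar\sigma$. Realizing $H_\pi \otimes \bar{H_\sigma}$ as the Hilbert--Schmidt operators $H_\sigma \to H_\pi$, a nonzero invariant vector corresponds to a nonzero Hilbert--Schmidt operator $T: H_\sigma \to H_\pi$ intertwining $\sigma$ and $\pi$. Since $\sigma$ is irreducible, $T$ is injective and its image is a nonzero finite-dimensional $\Lambda$-invariant subspace of $H_\pi$ on which $\pi$ restricts to a copy of $\sigma$. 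This contradicts the assumption that $\pi$ is weakly mixing, completing the proof.

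The argument is essentially formal once the two standard facts are granted: stability of weak containment under tensoring with a fixed representation, and the Kazhdan isolation property of the trivial representation. There is no real obstacle here; the only point requiring any care is the identification of an invariant vector in $\pi \otimes \bar\sigma$ with an intertwiner producing an honest finite-dimensional subrepresentation of $\pi$.
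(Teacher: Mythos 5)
Your proof is correct and follows essentially the same strategy as the paper's: argue by contradiction, tensor with a conjugate representation to produce a weakly contained trivial representation, invoke property (T) to upgrade weak containment to genuine containment, and conclude that $\pi$ would have a finite-dimensional subrepresentation. The only (harmless) variation is that you tensor $\pi$ with the fixed finite-dimensional $\bar\sigma$ and extract the subrepresentation via a Hilbert--Schmidt intertwiner, whereas the paper tensors $\pi$ with $\bar\pi$ and appeals to the standard equivalence between $1_\Lambda \leq \pi \otimes \bar\pi$ and $\pi$ admitting a finite-dimensional subrepresentation.
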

\begin{proof}
By contradiction, assume that $\rho$ is not weakly mixing. Then $\rho$ contains a finite dimensional representation. Up to replacing $\rho$ by this finite dimensional subrepresentation, we may assume that $\rho$ is finite dimensional and is weakly contained in $\pi$. Then $\rho \otimes \overline \rho$ contains the trivial representation and is weakly contained in $\pi \otimes \overline \pi$. Since $\Lambda$ has property (T), it follows that $\pi \otimes \overline \pi$ contains the trivial representation and thus $\pi$ contains a finite dimensional representation. This contradicts the assumption that $\pi$ is weakly mixing.
\end{proof}
 
\begin{proof}[Proof of Corollary \ref{main rep}]
Assume that $G$ has trivial center. Let $\Gamma < G$ be any irreducible lattice. Since $\Gamma$ has property (T), $\Gamma$ has countably many finite dimensional unitary representations up to unitary conjugacy, that we denote by $\pi_n$, $n \geq 1$ (see \cite[Theorem 2.1(iv)]{Wa74}). Denote by $\pi_0 = \lambda$ the left regular representation. Theorem \ref{main peterson} implies that $\Gamma$ has countably many extreme points in the space of characters: those corresponding to the finite dimensional unitary representations $(\pi_n,H_n)$, denoted by $\tau_n$, $n \geq 1$; and the one corresponding to the left regular representation $\pi_0 = \lambda$, namely the Dirac map at the identity, denoted by $\tau_0 = \delta_e$.

Let $\pi$ be any weakly mixing $\Gamma$-unitary representation. Set $A = \rC^*_\pi(\Gamma)$ and denote by $\sigma : \Gamma \curvearrowright A$ the conjugation action defined by $\sigma_\gamma = \Ad(\pi(\gamma))$ for every $\gamma \in \Gamma$. There exists a $\mu_0$-stationary state $\phi \in \mathcal S(A)$. Since $\varphi= \phi \circ \pi$ is a $\mu_0$-character, it is in fact a genuine character by Theorem \ref{main stationary characters}. This means that $\phi$ is a tracial state. Thus, $A$ has at least one tracial state. We prove now that the left regular representation $\lambda$ is weakly contained in $\pi$ and that $A$ has a unique tracial state. 

Let $\phi \in \mathcal S(A)$ be any tracial state and denote by $\varphi = \phi \circ \pi$ the corresponding character on $\Gamma$. By the first paragraph, we may find a sequence $(\alpha_n)_{n \in \N}$ of nonnegative real numbers such that $1 = \sum_{n \in \N} \alpha_n$ and $\varphi = \sum_{n  \in \N} \alpha_n \tau_n$. 

\begin{claim}\label{claim:weak}
For every $n \in \N$ such that $\alpha_n \neq 0$, we have that $\pi_n$ is weakly contained in $\pi$.
\end{claim}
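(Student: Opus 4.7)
The plan is to realize each $\tau_n$ as a positive definite coefficient of a representation factoring through $\pi$, and then to recognize $\pi_n$ as a subrepresentation of the GNS representation of $\tau_n$. Throughout, write $(\pi_\varphi, H_\varphi, \xi_\varphi)$ for the GNS triple associated to $\varphi$ viewed as a positive definite function on $\Gamma$.

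First I would observe that $\pi_\varphi$ is weakly contained in $\pi$. Indeed, since $\varphi = \phi \circ \pi$ and $\phi$ is a state on $A = \rC^*_\pi(\Gamma)$, the GNS representation $\pi_\phi : A \to \mathbf B(H_\varphi)$ satisfies $\pi_\varphi = \pi_\phi \circ \pi$ (up to unitary equivalence), so $\pi_\varphi$ extends to a $\ast$-representation of $\rC^*_\pi(\Gamma)$, which is the characterization of weak containment in $\pi$.

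Next, from the decomposition $\varphi = \sum_m \alpha_m \tau_m$ with $\alpha_n > 0$, the positive definite function $\alpha_n \tau_n$ is pointwise dominated by $\varphi$. The classical Radon--Nikodym correspondence for positive definite functions then provides $T \in \pi_\varphi(\Gamma)' \subset \mathbf B(H_\varphi)$ with $0 \leq T \leq 1$ such that $\alpha_n \tau_n(\gamma) = \langle \pi_\varphi(\gamma) T^{1/2}\xi_\varphi, T^{1/2}\xi_\varphi\rangle$ for every $\gamma \in \Gamma$. Setting $\eta = T^{1/2}\xi_\varphi$, the cyclic subrepresentation of $\pi_\varphi$ generated by $\eta$ is unitarily equivalent to the GNS representation $\pi_{\tau_n}$ of $\tau_n$. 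Thus $\pi_{\tau_n}$ is a subrepresentation of $\pi_\varphi$, and combining with the first step we obtain $\pi_{\tau_n} \prec \pi$.

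It remains to identify $\pi_n$ inside $\pi_{\tau_n}$. For $n = 0$, $\tau_0 = \delta_e$ and $\pi_{\tau_0}$ is unitarily equivalent to the left regular representation $\lambda = \pi_0$, so $\pi_0 \prec \pi$. For $n \geq 1$, $\pi_n$ is a finite dimensional irreducible representation of $\Gamma$ and $\tau_n$ is the normalized character $\frac{1}{\dim H_n}\Tr \circ \pi_n$; the associated GNS representation is unitarily equivalent to $\dim(H_n)$ copies of $\pi_n$, so $\pi_n$ embeds as a subrepresentation of $\pi_{\tau_n}$. In both cases, $\pi_n \prec \pi_{\tau_n} \prec \pi$, giving the claim. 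The only mildly subtle point is the Radon--Nikodym step identifying a dominated positive definite function with a coefficient of the ambient GNS representation, but this is a standard consequence of the GNS construction applied to the positive definite function $\varphi$.
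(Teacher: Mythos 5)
Your argument is correct, and at bottom it runs on the same engine as the paper's: the inequality $\alpha_n\tau_n\leq\varphi$ in the order of positive definite functions, combined with the fact that $\varphi=\phi\circ\pi$ factors through $\rC^*_\pi(\Gamma)$. The packaging differs slightly: the paper proves the kernel inclusion $\ker\pi\subset\ker\pi_n$ in $\rC^*(\Gamma)$ by a direct two-line estimate $\|\pi_n(b)\pi_n(a)\xi_n\|=\|ba\|_{2,\tau_n}\leq\alpha_n^{-1/2}\|ba\|_{2,\varphi}\leq\alpha_n^{-1/2}\|\pi(b)\|\,\|\pi(a)\|$, whereas you invoke the Radon--Nikodym correspondence to exhibit $\pi_{\tau_n}$ as an honest subrepresentation of $\pi_\varphi$ and then locate $\pi_n$ inside $\pi_{\tau_n}$; both are standard and equally rigorous, and yours has the mild advantage of making the containment of $\pi_{\tau_n}$ in $\pi_\varphi$ explicit rather than just a norm comparison. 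One wording to fix: $\alpha_n\tau_n$ is not \emph{pointwise} dominated by $\varphi$ (cancellation among the $\tau_m$ makes that false in general); the hypothesis you actually need and actually have is that $\varphi-\alpha_n\tau_n=\sum_{m\neq n}\alpha_m\tau_m$ is positive definite, which is exactly what the Radon--Nikodym lemma requires.
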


\begin{proof}[Proof of Claim \ref{claim:weak}]
We view $\pi_n$ and $\pi$ as representations of the full $\rC^*$-algebra $\rC^*(\Gamma)$. In particular $\varphi = \phi \circ \pi$ is now a state on $\rC^*(\Gamma)$. We denote by $\phi_n$ the canonical faithful normal tracial state on the finite factor $\pi_n(\Gamma)\dpr$ so that $\phi_n \circ \pi_n = \tau_n$. Note that $\phi_n$ is implemented by a cyclic vector $\xi_n \in H_n$. By uniqueness of the GNS representation, we have that $(\pi_n,H_n,\xi_n)$ is the GNS triple associated with $\tau_n$.
For all $a, b  \in \rC^*(\Gamma)$, we have 
\[\Vert \pi_n(b) \, \pi_n(a)\xi_n \Vert = \Vert ba \Vert_{2, \tau_n} \leq \frac{1}{\sqrt{\alpha_n}} \Vert ba \Vert_{2, \phi \circ \pi} \leq   \frac{1}{\sqrt{\alpha_n}}\Vert \pi(b) \Vert \cdot \Vert \pi(a) \Vert.\]
If $\pi(b) = 0$, then $\pi_n(b) = 0$. This proves our claim.
\end{proof}

Using Claim \ref{claim:weak} and Lemma \ref{lem:(T)}, we obtain that $\alpha_n = 0$ for every $n \geq 1$. Then $\varphi = \tau_0$ and Claim \ref{claim:weak} implies that $\lambda = \pi_0$ is weakly contained in $\pi$. Denote by $\Theta_{\pi, \lambda} : A \to \rC^*_\lambda(\Gamma) : \pi(\gamma) \mapsto \lambda(\gamma)$ the corresponding surjective unital $\ast$-homomorphism. Then we have $\phi = \tau_0 \circ \Theta_{\pi, \lambda}$. This shows that $A = \rC^*_\pi(\Gamma)$ has a unique tracial state.

Finally, we show that $\ker(\Theta_{\pi, \lambda})$ is the unique proper maximal ideal of $A = \rC^*_\pi(\Gamma)$. Assume that $I \subset A$ is a proper ideal and consider the quotient map $\alpha : A \to A/I$. Then the unitary representation $\rho : \Gamma \to \mathcal U(A/I) : \gamma \mapsto \alpha( \pi(\gamma))$ is weakly contained in $\pi$ and hence weakly mixing by Lemma \ref{lem:(T)}. By the first part of the proof, we know that $\lambda$ is weakly contained in $\rho$, which in turn implies that the map $\beta : A/I \to \rC^*_\lambda(\Gamma) : \alpha(\pi(\gamma)) \mapsto \lambda(\gamma)$ is a well-defined surjective unital $\ast$-homomorphism. By construction,  we have $\Theta_{\pi, \lambda} = \beta \circ \alpha$. This shows that $I = \ker(\alpha) \subset \ker(\Theta_{\pi, \lambda})$.
\end{proof}

\subsection{Proofs of Theorem \ref{letterthm:actions} and Corollary \ref{lettercor:URS}} 

\begin{proof}[Proof of Theorem \ref{letterthm:actions}]
Let $\Gamma < G$ be any irreducible lattice. Set $Z(\Gamma) = \Gamma \cap Z(G)$. Let $(X, \nu)$ be any ergodic $(\Gamma, \mu_0)$-space. For every $\gamma \in \Gamma$, set $\Fix(\gamma) = \{x \in X \mid \gamma x = x\}$. Using Theorem \ref{main NCNZ}, the following dichotomy holds.
\begin{itemize}
\item [$(\rm i)$] Either $\nu$ is $\Gamma$-invariant.
\item [$(\rm ii)$] Or there exist a proper parabolic subgroup $P \subset Q\subsetneq G$ and a $\Gamma$-equivariant measurable factor map $(X, \nu) \to (G/Q, \nu_Q)$.
\end{itemize}
Moreover, assume that the action $\Gamma \curvearrowright (X, \nu)$ is faithful and properly ergodic. In case $(\rm i)$,  Theorem \cite[Corollary 4.4]{SZ92} implies that the action $\Gamma \curvearrowright (X, \nu)$ is essentially free.

In case $(\rm ii)$, Lemma \ref{lem:free} implies that for every $\gamma \in \Gamma \setminus Z(\Gamma)$, we have $\nu(\Fix(\gamma)) = 0$. Let now $\gamma \in Z(\Gamma) \setminus \{e\}$. Then $\Fix(\gamma)$ is $\Gamma$-invariant. Since the action is faithful and ergodic, this implies that $\nu(\Fix(\gamma)) = 0$. Thus, the action $\Gamma \curvearrowright (X, \nu)$ is essentially free.
\end{proof}

\begin{proof}[Proof of Corollary \ref{lettercor:URS}]
Assume that $G$ has trivial center. Let $\Gamma < G$ be any irreducible lattice. Let $\Gamma \curvearrowright X$ be any minimal action. Choose an extreme point $\nu$ in the compact convex set of all $\mu_0$-stationary Borel probability measures on the compact metrizable space $X$.  Then the action $\Gamma \curvearrowright (X, \nu)$ is ergodic. Moreover, since the action $\Gamma \curvearrowright X$ is minimal, we have $\supp(\nu) = X$. Assume that the action $\Gamma \curvearrowright X$ is not topologically free. Then there exists $\gamma \in \Gamma \setminus \{e\}$ such that $\Fix(\gamma) = \{x \in X \mid \gamma x = x\}$ has nonempty interior. Since $\supp(\nu) = X$, this implies that $\nu(\Fix(\gamma)) > 0$ and so the action $\Gamma \curvearrowright (X, \nu)$ is not essentially free. Theorem \ref{letterthm:actions} implies that the action $\Gamma \curvearrowright (X, \nu)$ is not faithful or not properly ergodic. 

If the action is not faithful, then Margulis' normal subgroup theorem (see \cite[Theorem IV.4.10]{Ma91}) implies that $\ker(\Gamma \curvearrowright (X, \nu))$ has finite index in $\Gamma$, and we get the desired conclusion that $X$ is finite. 
If now the action is not properly ergodic, then $\nu$ is atomic. Since $\nu$ is $\mu_0$-stationary, the maximum principle implies that the set of atoms of $\nu$ is finite and $\nu$ is $\Gamma$-invariant. This implies that $\supp(\nu)$ coincides with the finite set of atoms and so $X = \supp(\nu)$ is finite in this case as well.

Let now $X \subset \Sub(\Gamma)$ be any URS. By definition of the URS and since $\Gamma \neq \{e\}$, for every $x = \Lambda \in X$, we have $\Stab_\Gamma(x) = {\rm N}_\Gamma(\Lambda) \neq \{e\}$. This implies that $\Gamma \curvearrowright X$ is not topologically free. Therefore, $X$ is finite.
\end{proof}


\appendix

\section{On the essential range of a Borel measurable function}

Let $Y$ be any Polish space and $\eta \in \Prob(Y)$ any Borel probability measure. Recall that the {\em topological support} $\supp(\eta)$ of the measure $\eta$ is the intersection of all the closed subsets $F \subset Y$ such that $\eta(F) = 1$.

\begin{defn}
Let $(X, m)$ be a standard measure space endowed with a $\sigma$-finite Borel measure and $(Y, \mathcal O)$ a topological space with a countable basis. Let $f : X \to Y$ be a Borel measurable map.
\begin{itemize}
\item The {\em essential range} or {\em essential image} $F_f$ of $f : X \to Y$ is the intersection of all the closed subsets $F \subset Y$ such that $m(X \setminus f^{-1}(F)) = 0$.
\item An element $y \in Y$ is said to be an {\em essential value} of the function $f : X \to Y$ if for every open subset $O \subset Y$ such that $y \in O$, we have $m(f^{-1}(O)) > 0$. We will denote by $V_f \subset Y$ the subset of all the essential values of $f$.
\end{itemize}
\end{defn}

\begin{lem}\label{lem:essential-range}
Let $(X, m)$ be a standard measure space endowed with a $\sigma$-finite Borel measure and $Y$ a Polish space. Let $f : X \to Y$ be a Borel measurable map. Choose a Borel probability measure $\mu \in \Prob(X)$ in the same class as $m$. Then we have
$$F_f = V_f = \supp(f_\ast \mu) \quad \text{and} \quad \mu(f^{-1}(Y \setminus F_f)) = 0.$$
\end{lem}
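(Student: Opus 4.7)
The plan is to show that all three sets $F_f$, $V_f$, and $\supp(f_\ast\mu)$ coincide, and then derive the measure-zero statement as a byproduct.

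First, I would note that since $\mu$ and $m$ belong to the same measure class, they have identical null sets, so the definition of $F_f$ is unchanged when one replaces $m$ by $\mu$. Throughout, I will work with $\mu$. The equality $V_f = \supp(f_\ast\mu)$ is then essentially tautological: an element $y \in Y$ lies in $V_f$ iff every open neighborhood $O$ of $y$ satisfies $\mu(f^{-1}(O)) = (f_\ast\mu)(O) > 0$, which is precisely the defining property of the topological support of $f_\ast\mu$.

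Next, I would establish $F_f = \supp(f_\ast\mu)$. Write $S = \supp(f_\ast\mu)$. Since $S$ is closed and $(f_\ast\mu)(S) = 1$, i.e.\ $\mu(X \setminus f^{-1}(S)) = 0$, the set $S$ belongs to the family of closed sets whose intersection defines $F_f$, so $F_f \subseteq S$. For the reverse inclusion, the key point — and the only nontrivial ingredient in the proof — is the second countability of the Polish space $Y$. Fix a countable basis $\{O_n\}_{n \in \N}$. For any $y \notin F_f$, there exists by definition a closed set $F \subset Y$ with $\mu(X \setminus f^{-1}(F)) = 0$ and $y \notin F$; choosing a basic open neighborhood of $y$ contained in $Y \setminus F$, we obtain some $O_{n(y)}$ with $y \in O_{n(y)}$ and $\mu(f^{-1}(O_{n(y)})) = 0$. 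Letting $\mathcal I = \{n \in \N \mid \mu(f^{-1}(O_n)) = 0\}$, the previous discussion gives
\[ Y \setminus F_f \ \subseteq \ \bigcup_{n \in \mathcal I} O_n .\]
By countable subadditivity, this implies $\mu(f^{-1}(Y \setminus F_f)) = 0$, which yields the measure-zero statement of the lemma. In particular, $F_f$ itself lies in the defining family of closed sets for $F_f$, so $F_f$ is the smallest such closed set; since $S$ also satisfies $\mu(X \setminus f^{-1}(S)) = 0$, we deduce $F_f \supseteq S$, completing the identification $F_f = S$.

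There is no real obstacle in this argument: the only substantive ingredient is the Lindelöf property of Polish spaces, which ensures that the possibly uncountable intersection defining $F_f$ can be replaced by a countable one, thereby preserving the null-set property.
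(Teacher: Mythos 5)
Your overall strategy is the same as the paper's: the only substantive ingredient is second countability of $Y$, used to replace the uncountable union of null open sets covering the complement of the support by a countable one. The paper organizes this as $F_f=\supp(f_\ast\mu)$ by definition (recall the paper \emph{defines} $\supp(\eta)$ as the intersection of all closed sets of full $\eta$-measure, so this equality, not $V_f=\supp(f_\ast\mu)$, is the tautological one), and then proves $F_f\subset V_f$ via the countable-basis argument and $V_f\subset F_f$ directly. You instead take the neighborhood characterization of the support as its definition, which shifts where the work sits but does not change its content.

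There is, however, one inference in your write-up that does not go through as stated. At the end you argue: ``$F_f$ is the smallest closed set of full measure; since $S$ also has full measure, we deduce $F_f\supseteq S$.'' Minimality of $F_f$ in the family of closed full-measure sets gives exactly the opposite inclusion $F_f\subseteq S$, which you had already established; it cannot give $S\subseteq F_f$. The correct justification for $S\subseteq F_f$ (equivalently $V_f\subseteq F_f$) is already contained two sentences earlier in your own paragraph: you showed that every $y\notin F_f$ admits a basic open neighborhood $O_{n(y)}$ with $\mu(f^{-1}(O_{n(y)}))=0$, and this says precisely that $y\notin V_f=S$. Replacing the faulty minimality argument by this observation (which needs no countable basis at all --- the single open set $Y\setminus F$ already works) closes the gap; note also that $(f_\ast\mu)(S)=1$, which you assert when proving $F_f\subseteq S$, itself requires the Lindel\"{o}f argument under your definition of $S$, so it should be proved before being used rather than only appearing later for $F_f$.
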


\begin{proof}
By definition, we have $F_f = \supp(f_\ast \mu)$. It remains to prove that $F_f = V_f$. 

We first show that $F_f \subset V_f$. Observe that $V_f \subset Y$ is a closed subset. Indeed let  $y \in \overline{V_f}$ and $O \subset Y$ be an open subset with $y \in O$. There exists $z \in V_f \cap O$, so we have $\mu(f^{-1}(O)) > 0$ and hence $y \in V_f$. We next show that $\mu(f^{-1}(V_f)) = 1$. Indeed for all $y \in Y \setminus V_f$, there exists an open subset $O_y \subset Y \setminus V_f$ such that $y \in O_y$ and $\mu(f^{-1}(O_y)) = 0$. We have $Y \setminus V_f = \bigcup_{y \in Y \setminus V_f} O_y$. Since the topology $\mathcal O$ on $Y$ has a countable basis, it follows that $\mu(Y \setminus V_f) = 0$. Thus, $V_f \subset Y$ is a closed subset satisfying $\mu(f^{-1}(V_f)) = 1$. It follows that $F_f \subset V_f$.

We next show that $V_f \subset F_f$. Let $F \subset Y$ be any closed subset satisfying $\mu(f^{-1}(F)) = 1$. Let $y \in Y \setminus F$. Since $Y \setminus F \subset Y$ is an open subset and since $\mu(f^{-1}(Y \setminus F)) = 0$, we have $y \in Y \setminus V_f$. Therefore $Y \setminus F \subset Y \setminus V_f$ and hence $V_f \subset F$. Since this holds for all closed subsets $F \subset Y$ satisfying $\mu(f^{-1}(F)) = 1$ we have $V_f \subset F_f$.
\end{proof}

Observe that if $f , g : X \to Y$ are two Borel measurable maps that agree $\mu$-almost everywhere, we have $F_f = F_g$.

\bibliographystyle{plain}

\begin{thebibliography}{BKKO14}

\bibitem[7s12]{7s12} {\sc M. Abert, N. Bergeron, I. Biringer, T. Gelander, N. Nikolov, J. Raimbault, I. Samet}, {\it On the growth of $\rL^2$-invariants for sequences of lattices in Lie groups.} Ann. of Math. {\bf 185} (2017),  711--790.

\bibitem[AGV12]{AGV12} {\sc M. Abert, Y. Glasner, B. Virag}, {\it Kesten's theorem for invariant random subgroups.} Duke Math. J. {\bf 163} (2014), 465--488. 

\bibitem[AB18]{AB18} {\sc V. Alekseev, R. Brugger}, {\it On invariant random positive definite functions.} {\tt arXiv:1804.10471}


\bibitem[BBHP20]{BBHP20} {\sc U. Bader, R. Boutonnet, C. Houdayer, J. Peterson}, {\it Charmenability of arithmetic groups of product type.} {\tt arXiv:2009.09952}

\bibitem[BF14]{BF14} {\sc U. Bader, A. Furman}, {\it  Boundaries, rigidity of representations, and Lyapunov exponents.} Proceedings of the International Congress of Mathematicians--Seoul 2014. Vol. III, 71--96, Kyung Moon Sa, Seoul, 2014. 

\bibitem[BS04]{BS04} {\sc U. Bader, Y. Shalom}, {\it Factor and normal subgroup theorems for lattices in
products of groups.} Invent. Math. {\bf 163} (2006), 415--454.

\bibitem[Be95]{Be95} {\sc B. Bekka}, {\it Restrictions of unitary representations to lattices and associated $\rC^*$-algebras.} J. Funct. Anal. {\bf 143} (1997), 33--41. 

\bibitem[Be06]{Be06} {\sc B. Bekka}, {\it Operator-algebraic superridigity for $\SL_n(\Z)$, $n \geq 3$.}
Invent. Math. {\bf 169} (2007), 401--425.

\bibitem[Be19]{Be19} {\sc B. Bekka}, {\it Character rigidity of simple algebraic groups.} Math. Ann. {\bf 378} (2020), 1223--1243.

\bibitem[BCH94]{BCH94} {\sc B. Bekka, M. Cowling, P. de la Harpe}, {\it Some groups whose reduced $\rC^*$-algebra is simple.} Publ. Math. Inst. Hautes Études Sci. {\bf 80} (1994), 117--134. 

\bibitem[BF20]{BF20} {\sc B. Bekka, C. Francini}, {\it Characters of algebraic groups over number fields.} {\tt arXiv:2002.07497}

\bibitem[BK19]{BK19} {\sc B. Bekka, M. Kalantar}, {\it Quasi-regular representations of discrete groups and associated $\rC^*$-algebras.} Trans. Amer. Math. Soc. {\bf 373} (2020), 2105--2133.

\bibitem[BKKO14]{BKKO14} {\sc \'E. Breuillard, M. Kalantar, M. Kennedy, N. Ozawa}, {\it $\rC^*$-simplicity and the unique trace property for discrete groups.} Publ. Math. Inst. Hautes Études Sci. {\bf 126} (2017), 35--71.

\bibitem[BO08]{BO08} {\sc N.P. Brown, N. Ozawa}, {\it $\rC^*$-algebras and finite-dimensional approximations.} Graduate Studies in Mathematics, {\bf 88}. American Mathematical Society, Providence, RI, 2008.


\bibitem[CJ83]{CJ83} {\sc A. Connes, V.F.R. Jones}, {\it Property T for von Neumann algebras.} Bull. London Math. Soc. {\bf 17} (1985), 57--62. 

\bibitem[CP12]{CP12} {\sc D. Creutz, J. Peterson}, {\it Stabilizers of ergodic actions of lattices and commensurators.} Trans. Amer. Math. Soc. {\bf 369} (2017), 4119--4166. 

\bibitem[CP13]{CP13} {\sc D. Creutz, J. Peterson}, {\it Character rigidity for lattices and commensurators.} {\tt arXiv:1311.4513}


\bibitem[DM12]{DM12} {\sc A. Dudko, K. Medynets}, {\it Finite factor representations of Higman-Thompson groups.} Groups Geom. Dyn. {\bf 8} (2014), 375--389.

\bibitem[Fu00]{Fu00} {\sc A. Furman}, {\it Random walks on groups and random transformations.} Handbook of dynamical systems, Vol. 1A, 931--1014, North-Holland, Amsterdam, 2002.

\bibitem[Fu62a]{Fu62a} {\sc H. Furstenberg}, {\it A Poisson formula for semi-simple Lie groups.} Ann. of Math. {\bf 77} (1963), 335--386.

\bibitem[Fu62b]{Fu62b} {\sc H. Furstenberg}, {\it Non commuting random products.} Trans. Amer. Math. Soc. {\bf 108} (1963), 377--428.

\bibitem[Fu67]{Fu67} {\sc H. Furstenberg}, {\it Poisson boundaries and envelopes of discrete groups.} Bull. Amer. Math. Soc. {\bf 73} (1967), 350--356. 

\bibitem[Fu73]{Fu73} {\sc H. Furstenberg}, {\it Boundary theory and stochastic processes on homogeneous spaces.} Harmonic analysis on homogeneous spaces (Proc. Sympos. Pure Math., Vol. XXVI, Williams Coll., Williamstown, Mass., 1972), 193--229. Amer. Math. Soc., Providence, R.I., 1973. 

\bibitem[GK95]{GK95} {\sc L. Ge, R. Kadison}, {\it On tensor products for von Neumann algebras.} Invent. Math. {\bf 123} (1996), 453--466.

\bibitem[Ge14]{Ge14} {\sc T. Gelander}, {\it A lecture on invariant random subgroups.} New directions in locally compact groups, 186--204, London Math. Soc. Lecture Note Ser., {\bf 447}, Cambridge Univ. Press, Cambridge, 2018.

\bibitem[GW14]{GW14} {\sc E. Glasner, B. Weiss}, {\it Uniformly recurrent subgroups.} Recent trends in ergodic theory and dynamical systems, 63--75, Contemp. Math., {\bf 631}, Amer. Math. Soc., Providence, RI, 2015. 

\bibitem[GM89]{GM89} {\sc I.Ya. Goldsheid, G.A. Margulis}, {\it Lyapunov indices of a product of random matrices.} Russian Math. Surveys {\bf 44} (1989), 11--71. 


\bibitem[Ha15]{Ha15} {\sc U. Haagerup}, {\it A new look at $\rC^*$-simplicity and the unique trace property of a group.} Operator algebras and applications--the Abel Symposium 2015, 167--176, Abel Symp., {\bf 12}, Springer, 2017. 

\bibitem[HK17]{HK17} {\sc Y. Hartman, M. Kalantar}, {\it Stationary $\rC^*$-dynamical systems.} To appear in   J. Eur. Math. Soc. (JEMS) {\tt arXiv:1712.10133}

\bibitem[Jo00]{Jo00} {\sc V.F.R. Jones}, {\it Ten problems.} Mathematics: frontiers and perspectives, 79--91, Amer. Math. Soc., Providence, RI, 2000. 

\bibitem[KK14]{KK14} {\sc M. Kalantar, M. Kennedy}, {\it Boundaries of reduced $\rC^*$-algebras of discrete groups.} J. Reine Angew. Math. {\bf 727} (2017), 247--267. 

\bibitem[Ke15]{Ke15} {\sc M. Kennedy}, {\it An intrinsic characterization of $\rC^*$-simplicity.}  Ann. Sci. Éc. Norm. Supér. {\bf 53} (2020), 1105--1119.

\bibitem[LL20]{LL20} {\sc Omer Lavi, Arie Levit}, {\it Characters of the group $\EL_d(R)$ for a commutative Noetherian ring $R$.} {\tt arXiv:2007.15547}

\bibitem[LBMB16]{LBMB16} {\sc A. Le Boudec, N. Matte Bon}, {\it  Subgroup dynamics and $\rC^*$-simplicity of groups of homeomorphisms.} Ann. Sci. \'Ec. Norm. Sup\'er. {\bf 51} (2018), 557--602.

\bibitem[Ma91]{Ma91} {\sc G.A. Margulis}, {\it Discrete subgroups of semisimple Lie groups.} Ergebnisse der Mathematik und ihrer Grenzgebiete (3) [Results in Mathematics and Related Areas (3)], {\bf 17}. Springer-Verlag, Berlin, 1991. x+388 pp. 

\bibitem[NZ97]{NZ97} {\sc A. Nevo, R.J. Zimmer}, {\it Homogenous projective factors for actions of semi-simple Lie groups.} Invent. Math. {\bf 138} (1999), 229--252.  

\bibitem[NZ00]{NZ00} {\sc A. Nevo, R.J. Zimmer}, {\it A structure theorem for actions of semisimple Lie groups.} Ann. of Math. {\bf 156} (2002), 565--594. 

\bibitem[NZ02]{NZ02}  {\sc A. Nevo, R.J. Zimmer}, {\it Actions of semisimple Lie groups with stationary measure.} Rigidity in dynamics and geometry (Cambridge, 2000), 321--343, Springer, Berlin, 2002. 

\bibitem[Oz16]{Oz16} {\sc N. Ozawa}, {\it A remark on fullness of some group measure space von Neumann algebras.} Compos. Math. {\bf 152} (2016), 2493--2502.

\bibitem[Pe14]{Pe14} {\sc J. Peterson}, {\it Character rigidity for lattices in higher-rank groups.} Preprint 2014.

\bibitem[PT13]{PT13} {\sc J. Peterson, A. Thom}, {\it Character rigidity for special linear groups.} J. Reine Angew. Math. {\bf 716} (2016), 207--228. 

\bibitem[SZ98]{SZ98} {\sc \c{S}. Str\u{a}til\u{a}, L. Zsid\'o}, {\it The commutation theorem for tensor products over von Neumann algebras.} J. Funct. Anal. {\bf 165} (1999), 293--346.

\bibitem[SZ92]{SZ92} {\sc G. Stuck, R.J. Zimmer}, {\it Stabilizers for ergodic actions of higher rank semisimple groups.} Ann. of Math. {\bf 139} (1994), 723--747.

\bibitem[Ta02]{Ta02} {\sc M. Takesaki}, {\it Theory of operator algebras. ${\rm I}$.} Reprint of the first (1979) edition. Encyclopaedia of Mathematical Sciences, {\bf 124}. Operator Algebras and Non-commutative Geometry, 5. Springer-Verlag, Berlin, 2002. xx+415 pp.

\bibitem[Ta03a]{Ta03a} {\sc M. Takesaki}, {\it Theory of operator algebras. ${\rm II}$.} Encyclopaedia of Mathematical Sciences, {\bf 125}. Operator Algebras and Non-commutative Geometry, 6. Springer-Verlag, Berlin, 2003. xxii+518 pp.

\bibitem[Ta03b]{Ta03b} {\sc M. Takesaki}, {\it Theory of operator algebras. ${\rm III}$.}
Encyclopaedia of Mathematical Sciences, {\bf 127}. Operator Algebras and Non-commutative Geometry, 8. Springer-Verlag, Berlin, 2003. xxii+548 pp.

\bibitem[Wa74]{Wa74} {\sc P.S. Wang}, {\it On isolated points in the dual spaces of locally compact groups.} Math. Ann. {\bf 218} (1975), 19--34.

\bibitem[Zi84]{Zi84} {\sc R.J. Zimmer}, {\it Ergodic theory and semisimple groups.} Monographs in Mathematics, {\bf 81}. Birkh\"auser Verlag, Basel, 1984. x+209 pp.

\end{thebibliography}

\end{document}